\numberwithin{equation}{section}
\newtheorem{theorem}{Theorem}[section]
\newtheorem{assumption}[theorem]{Assumption}
\newtheorem{lemma}[theorem]{Lemma}
\newtheorem{remark}[theorem]{Remark}
\newtheorem{claim}[theorem]{Claim}
\let\plainqed\qedsymbol
\newcommand{\claimqed}{$\lrcorner$}
\newenvironment{claimproof}{\begin{proof}\renewcommand{\qedsymbol}{\claimqed}}{\end{proof}\renewcommand{\qedsymbol}{\plainqed}}
\newcommand{\keywords}[1]{\textbf{\textit{Keywords ---}} #1}
\newcommand*\diff{\mathop{}\!\mathrm{d}}
\renewcommand{\P}{\mathbb{P}}
\newcommand{\E}{\mathbb{E}}
\newcommand{\R}{\mathbb{R}}
\newcommand{\N}{\mathbb{N}}
\newcommand{\bmu}{\boldsymbol{\mu}}
\begin{document}

\title{Distributed Rate Scaling in Large-Scale Service Systems}

\author[1]{Daan Rutten\thanks{Email: \href{mailto:drutten@gatech.edu}{drutten@gatech.edu}}\textsuperscript{,}}
\author[2]{Martin Zubeldia}
\author[1]{Debankur Mukherjee}

\affil[1]{Georgia Institute of Technology}
\affil[2]{University of Minnesota}

\maketitle
\keywords{load balancing, rate scaling, distributed optimization}

\begin{abstract}
We consider a large-scale parallel-server system, where each server independently adjusts its processing speed in a decentralized manner. The objective is to minimize the overall cost, which comprises the average cost of maintaining the servers' processing speeds and a non-decreasing function of the tasks' sojourn times. The problem is compounded by the lack of knowledge of the task arrival rate and the absence of a centralized control or communication among the servers.
We draw on ideas from stochastic approximation and present a novel rate scaling algorithm that ensures convergence of all server processing speeds to the globally asymptotically optimum value as the system size increases. 
Apart from the algorithm design, a key contribution of our approach lies in demonstrating how concepts from the stochastic approximation literature can be leveraged to effectively tackle learning problems in large-scale, distributed systems.
En route, we also analyze the performance of a fully heterogeneous parallel-server system, where each server has a distinct processing speed, which might be of independent interest.
\end{abstract}


\section{Introduction} 

The escalating power consumption of data centers has become a pressing concern in recent years. If left unchecked, the electricity demand of data centers is predicted to grow up to 8\% of the total U.S.~electricity consumption by 2030~\cite{jones2018stop}. 
As a result, data center providers are continuously working towards maximizing energy efficiency in their servers by pushing the hardware's capabilities to their peak potential. 
However, to achieve significant additional improvements, there is a renewed interest in enhanced algorithm design in this domain~\cite{Shebabi2016}.
One popular approach to improve power efficiency has been dynamic speed scaling~\cite{bansal2009speed, wierman2009power, albers2014speed, ata2006dynamic}, in which  processing speeds of servers are dynamically scaled to strike a delicate balance between the system's power consumption and the user-perceived quality of service (such as delay).
The main contribution of the current paper is to propose a novel algorithm for speed scaling in the context of large-scale implementations and establish its asymptotic optimality.

In modern chips, the power consumption exhibits a polynomial growth pattern in relation to the processing speed (typically cubic), while the processing time is clearly inversely related. 
These two features are the building blocks of the global system cost, which is a carefully crafted combination of the power consumption as well as the user-perceived sojourn time. 
The problem can be framed as an online optimization problem, requiring servers to determine their service rate in order to minimize a specific global objective function; see~\eqref{eq:costfunction}. 
This problem is particularly relevant for large-scale data centers, which house tens of thousands of servers. Any centralized policy in such facilities is prohibitive due to the scale of operations and their implementation complexity. 
Therefore, any algorithm for this optimization problem must be \textit{scalable}.
Even more importantly, the global arrival rate of tasks is often unknown, especially to individual servers. Therefore, any algorithm for this optimization problem must \textit{dynamically learn and adapt} to the environment. 
Motivated by the above, in this paper, we present a distributed, online solution to the aforementioned optimization problem. 
The proposed algorithm enables each server to autonomously converge towards its globally optimal service rate without requiring any explicit inter-server communication or knowledge of the incoming traffic intensity.

\subsection{Previous work}

There are several lines of works in the literature that are relevant to the problem under consideration. 
In the absence of stochastic arrivals, speed scaling in the single-server case has drawn considerable attention.  
Here, the predominant tool has been competitive analysis. Bansal et al.~\cite{bansal2009speed} considered the shortest remaining processing time (SRPT) service disciple, Wierman et al.~\cite{wierman2009power} studied processor sharing systems and Albers et al.~\cite{albers2014speed} analyzed parallel processors with deadline constraints.
Moreover, Ata and Shneorson~\cite{ata2006dynamic} considered the speed scaling problem when the inter-arrival times and processing times are exponentially distributed.
The setup in the above works is the closest to our work in spirit, but it lacks the fundamental challenge of tackling dynamically evolving heterogeneous systems, which are inherently high-dimensional, under the effects of a load balancing algorithm.

In the case of parallel server systems, an alternative approach for reducing system energy consumption is to dynamically scale the number of `active servers', or turning servers on and off, to avoid unnecessary idleness by the servers.
There has been a large body of works in designing algorithms true to this approach~\cite{Gandhi2010, gandhi2013exact, LWAT13,  maccio2015optimal, MDBL17, MS19, RM23}.
It is worth mentioning that this line of works predominantly consider homogeneous servers.
Here, the goal is to have the minimum number of servers active such that the performance, in terms of delay, is not degraded by congestion. 
The main challenge in this scenario is the fact that servers cannot be turned on instantaneously, so systems have to be careful to ensure that the active capacity is enough at all times. 

Finally, there are several works on parallel server systems with heterogeneous server rates. In the scheduling literature, the literature has analyzed scheduling tasks across different machines while also adapting to their service rates \cite{albers2014speed, Nair2020, LearningEnergyScheduling}. There are a few papers closer to our setup, which consider a joint dispatching and speed scaling at a fluid level, where the fraction of tasks and the speed of the servers are chosen to minimize a tractable cost function \cite{scaling22,HUANG201924, DynSpeedScaling}. The authors develop decentralized algorithms which minimize the cost, either by writing the optimization as a Lyapunov function plus a penalty and then moving along the drift (\cite{scaling22}), by employing a Lagrange method and binary search approach (\cite{HUANG201924}), or by sharing local information with neighbors to solve the convex optimization problem in a decentralized way (\cite{DynSpeedScaling}). While these papers consider dynamic rate scaling, the fact that they customize and control the load balancing policy greatly simplifies both the objective function and the analysis.

\subsection{Our contributions}
In this paper, we design and analyze a simple, distributed algorithm that runs decentralized at each server and updates the service rate based only on the idle times of the local server. 
In the relevant asymptotic regime, we prove that the cost of the server rates under our algorithm converge to the globally optimal cost. 
More specifically, we make contributions on three fronts:\\

\noindent
\textbf{(a)} \emph{Heterogeneous service rates}. We mentioned before that the objective function involves the (steady-state expected) sojourn time of tasks. Therefore, to evaluate the system performance for any speed profile of the system, or to even formulate the optimization problem, it is crucial to better understand the sojourn time of tasks in fully heterogeneous large-scale systems, i.e., where all service rates are distinct. 
To the best of our knowledge, in this setup, expressions for the average queue length or task sojourn times under policies like Join-Idle-Queue (JIQ) are unknown till date. 
More fundamentally, we have reasons to believe that simple expressions for these statistics may not even exist.
To tackle this problem, we prove that the steady state expected idle time of different servers becomes equal in the many-server limit, even for a heterogeneous system. 
To obtain this result, we use an auxiliary measure-valued process to bridge the gap between the intractable system state and a more manageable deterministic approximation. 
Although we do not have a closed form expression, the characterization of the idle time allows us to describe the trajectories of the service rates and compute the expected sojourn time in the limit. 
En route, we show a concentration result on the fraction of busy servers in the system.\\

\noindent
\textbf{(b)}\ \emph{Local gradient descent of global cost}. A popular approach in continuous, convex optimization is \emph{gradient descent} due to its simple formulation and convergence guarantees. Here, the decision variable is moved incrementally in the negative gradient direction, which, for sufficiently small steps, guarantees that the objective decreases in every step. 
At this point, the literature has explored numerous variants of gradient descent. 
For example, of particular relevance to the current scenario, when only noisy estimates of the gradient are available \cite{RobbinsMonro,gardner1984learning,Polyak,Nemirovski,yuan2016convergence}. 
However, a gradient descent-type approach cannot directly be applied to the current setup for several reasons. 
First, as discussed before, a closed form expression for the expected sojourn time in a heterogeneous system does not exist. As a result, the gradient of the global cost function cannot be evaluated by an algorithm. 
Moreover, we cannot assume that such a quantity would be a convex and well-behaved (such as, differentiable) function of the service rates for a policy such as Join-Idle-Queue (JIQ). 
Second, and more importantly, even if an expression for the sojourn time would be available, it would depend on the service rates of \emph{all servers} in the system and hence cannot be computed at a single server without communication. 
Similarly, any \emph{unbiased, noisy estimates} of the sojourn times cannot be obtained either. 
For example, the number of tasks a server receives in a particular time interval does capture certain global statistics of the system but is ultimately biased by the service rate of the server itself, which violates crucial assumptions of this method. This is further explained in Section \ref{sec:idleAnalysis}.

We overcome the above difficulties by leveraging the fact that the expected idle times of the servers become equal in the many-server limit, even under heterogeneity.
The proof of this fact constitutes the bulk of the technical part of the paper.
Using this, we obtain an asymptotically tight (convex) lower bound on the cost function, and show that this lower bound is minimized by a homogeneous system (Theorem~\ref{thm:optimum}).\\

\noindent
\textbf{(c)}\ \emph{Distributed optimization.} 
Although there are numerous approaches to distributed optimization problems in the literature, all of them allow some form of (limited) communication between the agents (servers in our case) \cite{tsitsiklis1984,distributed86,Nedic18,yang2019survey}. 
In contrast, we do not allow any explicit communication between servers. This renders any possibility to arrive at a global optimum solution hopeless in traditional distributed optimization settings. 
However, our setup differs in one important way: the servers receive tasks via the JIQ load balancing policy that \emph{does} have some global `signal' about the system, i.e., the policy assigns tasks to idle servers, whenever there are any. 
The challenge is, therefore, to carefully use these sparse and implicit hints from the load balancing policy to change a server's service rate. 
For example, under the JIQ policy, if a server is idle and receives a new task very fast, then one of two possibilities could be happening:
(i) the system is overloaded and the server should increase its service rate; or
(ii) the service rate of that specific server is high enough but the service rate of other servers is too low. 
The algorithm should be able to distinguish these two possibilities.

Our algorithm is inspired by stochastic approximation algorithms with constant step sizes that provide performance guarantees in the asymptotic regime where the step size tends to zero. 
By exploiting the fact that the expected idle time of servers becomes equal in the many-server limit, we show that the service rate of each server converges to the globally optimal service rate under our algorithm.


\section{Model and main results}\label{sec:main}
We consider a system consisting of $n \in \N$ parallel servers with unit buffers, where server $v \in [n]$ has processing rate $\mu_v^{n}>0$. Tasks arrive as a Poisson process of rate $\lambda n$, and have i.i.d.~unit-mean exponential processing time requirements. Upon arrival, tasks are either routed to an idle server chosen uniformly at random, if there are any, or dropped otherwise. The goal is for the servers to run at processing rates that attain the optimal cost:
\begin{equation}\label{eq:costfunction}
    \inf\limits_{\bmu^n \in \mathbb{R}_+^n } \left\{\lambda g\left( \frac{1}{\E[ S^{n}(\infty) ]} \right) + \frac{1}{n} \sum_{v \in [n]} h(\mu_v^{n}) \right\},
\end{equation}
where $\E[ S^{n}(\infty) ]$ is the expected sojourn time of a typical task in steady state (which is a function of the vector of processing rates $\bmu^n$), $g:\mathbb{R}_+\to\mathbb{R}_+$ is a decreasing function which represents the cost of large sojourn times of tasks (hence this cost is weighed by the arrival rate) and $h:\mathbb{R}_+\to\mathbb{R}_+$ is an increasing function which represents the cost of maintaining a specific processing speed. We assume that these functions satisfy the following assumption.

\begin{assumption}\label{ass:opt}
We assume that:
\begin{enumerate}[{\normalfont (i)}]
    \item $g$ and $h$ are twice continuously differentiable;
    \item There exists $\sigma_g, \sigma_h > 0$, such that $g''(x) \geq \sigma_g$ and $h''(x) \geq \sigma_h$, for all $x\geq 0$;
    \item $g$ is decreasing, $h$ is increasing, and $h'(x) / x$ is non-decreasing in $x$;
    \item $h'(0)=0$;
    \item $\lambda g'(\lambda) + h'(\lambda) < 0$;
    \item There exist $\mu_+ > \mu_- > 0$ such that
     $g'(\mu_+) + \frac{h'(\mu_+)}{\mu_+} \geq 0$ and $g'(\mu_-) + h'(\mu_-) \left(\lambda + \frac{1}{\mu_-}\right) \leq 0.$ 
\end{enumerate}
\end{assumption}


\begin{remark}\normalfont
An example of functions $g$ and $h$ that satisfy Assumption \ref{ass:opt} is
\[ g(x) = \frac{1}{1+x} \qquad \text{and} \qquad h(x)=\beta x^3, \]
where $\beta>0$ is a small enough constant.The use of a polynomial function for the energy costs, and in particular of a cubic one, is well supported in the literature \cite{bansal2009speed,wierman2009power}.
Indeed, (i) and (ii) are clearly satisfied, and since
$g'(x) = -(1+x)^{-2}$ and $h'(x)= 3\beta x^2$,
(iii) and (iv) are also satisfied for all $\beta>0$. Moreover, we have
 $\lambda g'(\lambda) + h'(\lambda) = -\lambda(1+\lambda)^{-2} + 3\beta \lambda^2$, 
which is negative for all $\beta$ small enough, so (v) is satisfied when choosing $\beta$ small enough. Finally, we have
$g'(\mu_+) + h'(\mu_+)/\mu_+ = -(1+\mu_+)^{-2} + 3\beta \mu_+$, 
which is non-negative for $\mu_+$ sufficiently large for any $\beta>0$, and
$g'(\mu_-) + h'(\mu_-) \left(\lambda + 1/\mu_-\right) = -(1+\mu_-)^{-2} + 3\beta \mu_- \left(\lambda \mu_- + 1\right)$, 
which is non-positive for $\mu_-$ sufficiently small for any $\beta>0$. Thus, (vi) is also satisfied.
\end{remark}

In Assumption \ref{ass:opt}, (i) are (ii) are standard convexity assumptions. However, since the expected sojourn time of jobs is an unknown function of the service rate vector, our objective function may be non-convex, and possibly even discontinuous. Therefore, service rate vectors which achieve the infimum cost may not exists or there may be multiple ones. As a tractable upper bound for the infimum cost, we consider homogeneous systems with rate $\mu\geq 0$, for which the cost function reduces to the convex function $\lambda g(\mu) + h(\mu)$, for all $n$. Constrained to homogeneous systems, it is easily checked that Assumption \ref{ass:opt} (i), (ii), and (iii) imply that the infimum cost is indeed attained at
\begin{equation}
    \mu^* := \underset{\mu \geq 0}{\arg\min} \big\{ \lambda g(\mu) + h(\mu) \big\},
\end{equation}
with $\mu^*>\lambda$ by Assumption \ref{ass:opt} (v). Therefore, we have
\begin{equation}
    \inf\limits_{\bmu^n \in \mathbb{R}_+^n } \left\{\lambda g\left( \frac{1}{\E[ S^{n}(\infty) ]} \right) + \frac{1}{n} \sum_{v \in [n]} h(\mu_v^{n}) \right\} \leq \lambda g(\mu^*) + h(\mu^*).
\end{equation}
Our first result states that the gap in the above inequality vanishes, as $n\to\infty$, and that the system is stable when all servers run at rate $\mu^*$. In particular, this means that the homogeneous rate vector where all entries are equal to $\mu^*$ asymptotically minimizes the cost of Equation \eqref{eq:costfunction}, in the limit as $n\to\infty$ (i.e., in the many-server limit).


\begin{theorem}\label{thm:optimum}
Under Assumption~\ref{ass:opt}, we have $\mu^*>\lambda$, and
\[ \lim_{n\to\infty} \inf\limits_{\bmu^n \in \mathbb{R}_+^n } \left\{\lambda g\left( \frac{1}{\E[ S^{n}(\infty) ]} \right) + \frac{1}{n} \sum_{v \in [n]} h(\mu_v^{n}) \right\} = \lambda g(\mu^*) + h(\mu^*). \]
\end{theorem}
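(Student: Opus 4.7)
The plan is to separate the statement into the strict inequality $\mu^*>\lambda$ and the limit identity, and to sandwich the infimum in the latter between a simple upper bound (given by the homogeneous configuration at $\mu^*$) and a matching lower bound that relies on the idle-time equalization highlighted in contribution (a) of the introduction.

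First, $\mu^*>\lambda$ is immediate: the scalar map $F(\mu):=\lambda g(\mu)+h(\mu)$ is $(\lambda\sigma_g+\sigma_h)$-strongly convex by Assumption~\ref{ass:opt}(ii), so its minimizer is unique; since $F'$ is strictly increasing and $F'(\lambda)=\lambda g'(\lambda)+h'(\lambda)<0$ by Assumption~\ref{ass:opt}(v), the minimizer must satisfy $\mu^*>\lambda$. For the upper bound on the limit, I would plug in the homogeneous rate vector $\mu_v^n\equiv \mu^*$. The resulting system is symmetric and subcritical, so $I_v^n\equiv I^n$ trivially, and a direct mean-field argument (or the specialized homogeneous case of the later idle-time analysis) yields $\E[S^n(\infty)]\to 1/\mu^*$ as $n\to\infty$. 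The cost at this feasible point therefore converges to $\lambda g(\mu^*)+h(\mu^*)$.

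For the matching lower bound, let $\bmu^n\in\mathbb{R}_+^n$ be arbitrary, write $I_v^n=\P(v\text{ idle})$ in steady state, and set $\bar\mu^n=\tfrac{1}{n}\sum_v\mu_v^n$. Since each server has a unit buffer, the steady-state expected total number of tasks equals $\sum_v(1-I_v^n)$ and the throughput equals $\sum_v\mu_v^n(1-I_v^n)$, so Little's law gives
\[
\E[S^n(\infty)]=\frac{\sum_v(1-I_v^n)}{\sum_v \mu_v^n(1-I_v^n)}.
\]
The paper's central technical claim from contribution (a) allows the $I_v^n$'s to be replaced by a common value in the many-server limit, and the ratio above then converges to $n/\sum_v \mu_v^n = 1/\bar\mu^n$. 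Using that $g$ is decreasing, $\lambda g(1/\E[S^n(\infty)]) \geq \lambda g(\bar\mu^n)-o(1)$; using convexity of $h$ and Jensen's inequality, $\tfrac{1}{n}\sum_v h(\mu_v^n)\geq h(\bar\mu^n)$. Adding the two bounds and invoking the definition of $\mu^*$ yields
\[
\lambda g\!\left(\tfrac{1}{\E[S^n(\infty)]}\right)+\tfrac{1}{n}\sum_{v\in[n]}h(\mu_v^n)\;\geq\; \lambda g(\bar\mu^n)+h(\bar\mu^n)-o(1)\;\geq\; \lambda g(\mu^*)+h(\mu^*)-o(1),
\]
since $\bar\mu^n\geq 0$ and $\mu^*$ minimizes $F$ on $\mathbb{R}_+$. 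Taking the infimum over $\bmu^n$ and then the liminf in $n$ completes the lower bound and hence the theorem.

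The main obstacle is the idle-time equalization result from contribution (a): without it, the formula $\E[S^n(\infty)]\approx 1/\bar\mu^n$ is inaccessible, because no closed-form expression for the sojourn time in heterogeneous JIQ with unit buffers is available. A secondary technical point is that the infimum ranges over all nonnegative $\bmu^n$, so a truncation argument would be needed to handle rate vectors whose empirical mean is pathologically large or small: in either extreme, the $h$-term or the $g$-term alone already exceeds $\lambda g(\mu^*)+h(\mu^*)$ by coercivity of $h$ and optimality of $\mu^*$ respectively, so the idle-time analysis only has to be invoked uniformly on a compact range of $\bar\mu^n$.
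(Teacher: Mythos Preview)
Your overall architecture matches the paper's: the upper bound via the homogeneous configuration at $\mu^*$, and a lower bound that combines Jensen's inequality for the $h$-term with a sojourn-time estimate of the form $\E[S^n(\infty)]\ge 1/\bar\mu^n - o(1)$. The strict inequality $\mu^*>\lambda$ via strong convexity and Assumption~\ref{ass:opt}(v) is also correct.

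However, there is a genuine gap in your sojourn-time bound. You write $I_v^n=\P(v\text{ idle})$ and then assert that contribution~(a) of the introduction lets you replace all the $I_v^n$'s by a common value. But what the paper actually proves (Theorem~\ref{thm:idle}) is that the \emph{expected idle times} $\E_{\bmu^n}[I_v^n(\infty)]$, i.e., the expected \emph{durations} of idle periods, become equal across servers as $n\to\infty$, not the idle \emph{probabilities}. These are different objects: by a renewal argument, $\P(v\text{ busy})=1/\big(1+\mu_v^n\,\E[I_v^n(\infty)]\big)$, so even when the expected idle times coincide, the idle probabilities still depend on the individual rates $\mu_v^n$ and remain genuinely heterogeneous. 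Consequently, your Little's-law ratio does not cancel down to $1/\bar\mu^n$ as written.

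What the paper does instead is to first use Theorem~\ref{thm:idle} to write $\E[X_v^n(\infty)]\approx \lambda/\big(\lambda+\mu_v^n[1-c^n(\bmu^n)]\big)$, and then exploit that this expression is \emph{decreasing} in $\mu_v^n$: since $(\mu_v^n)_v$ and $(\E[X_v^n(\infty)])_v$ are oppositely ordered, a Chebyshev-sum inequality gives
\[
\frac{1}{n}\sum_{v} \mu_v^n\,\E[X_v^n(\infty)]\;\le\;\bar\mu^n\cdot\frac{1}{n}\sum_{v} \E[X_v^n(\infty)] + o(1),
\]
which together with Little's law yields $\E[S^n(\infty)]\ge 1/\bar\mu^n - o(1)$. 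This correlation/monotonicity step is precisely the missing ingredient in your argument; once it is inserted, the remainder of your outline (including the truncation remark for extreme $\bar\mu^n$) goes through essentially as in the paper.
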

To establish this result, we show that, for a given average rate, a homogeneous rate vector minimizes the cost when $n\to\infty$. While the convexity of $h$ easily implies that homogeneous processing rates minimize the second term in Equation \eqref{eq:costfunction} for any given average rate, the fact that this also minimizes the first term crucially relies on the fact that the expected idle times of servers become equal as $n\to\infty$ (cf. Theorem \ref{thm:idle}). The proof is given in Section~\ref{sec:proof_optimum}.

\subsection{Adaptive rate scaling: ODE characterization of  sample paths}
We now present our rate scaling algorithm, which dynamically adjusts the individual servers' processing rates. Since servers are oblivious to the global arrival rate $\lambda$ and cannot communicate with each other, we design an adaptive algorithm which learns the asymptotically optimal processing rates in a completely distributed way. Our algorithm is designed around the key insight that: \emph{the average idle time of each server becomes equal as $n \to \infty$ and therefore servers may use their idle times as `signals' (without any explicit exchange of information among servers) to gauge how their service rate compares to the average service rate in the system, and adjust accordingly.} Under our algorithm, the service rate of each server is updated as follows:
\begin{equation}\label{eq:algo}
\begin{multlined}
    \mu_v^{n, m}(t)
    = \mu_v^{n, m}(0) - \frac{1}{m} \int\limits_0^{t} \left[g'(\mu_v^{n,m}(s)) + h'(\mu_v^{n,m}(s) ) \left( I_v^{n,m}(s) + \frac{1}{\mu_v^{n,m}(s)} \right)\right] \diff s,
\end{multlined}
\end{equation}
where $m>0$ is a tunable hyperparameter, and $I_v^{n,m}(t)$ denotes the \emph{idle time} of server $v \in [n]$ at time $t$. That is, if server $v$ is idle at time $t$, then $I_v^{n,m}(t)$ is the length of time since the last service completion, and if server $v$ is busy at time $t$, then $I_v^{n,m}(t)$ is the length of its last idle period. Moreover, since Assumption \ref{ass:opt} (i) and (iii) imply that the limit $\lim_{x\to 0} h'(x)/x$ exists and is finite, we use the convention that $h'(0)/0=\lim_{x\to 0} h'(x)/x$ for the update rule to be well-defined at $\mu_v^{n,m}=0$.

\begin{remark}
    If the buffers are larger than unit size, tasks can be queued and thus processed one after another without the server being idle in between. In this case, we consider the length of the idle period to be zero. This signals that the server might be overloaded, and that it should increase its service rate if it is not too large already.
\end{remark}

\begin{remark} \label{rem:bounds} \normalfont
Assumption \ref{ass:opt} (ii), (iii), and (vi), and the convention $h'(0)/0=\lim_{x\to 0} h'(x)/x$, imply that $g'(0)+h'(0)I_v^{n,m}+h'(0)/0 \leq 0$ regardless of the value of $I_v^{n,m}$. Therefore, the integrated function in Equation \eqref{eq:algo} is non-positive when $\mu_v^{n,m}=0$ and thus $\mu_v^{n,m}(t) \geq 0$ for all $t \geq 0$ and $v \in [n]$. Moreover, Assumption \ref{ass:opt} (vi) implies that $g(\mu_+)+h'(\mu_+)(I_v^{n,m}+1/\mu_+) \geq g(\mu_+)+h'(\mu_+)/\mu_+ \geq 0$. Therefore, the integrated function in Equation \eqref{eq:algo} is non-negative when $\mu_v^{n,m}=\mu_+$ and thus, if $\mu_v^{n,m}(0)\leq \mu_+$, then $\mu_v^{n,m}(t)\leq \mu_+$ for all $t\geq 0$. We combine these two facts to conclude that, if $\mu^{n,m}(0) \in [0,\mu_+]^n$, then $\mu^{n,m}(t) \in [0,\mu_+]^n$ for all $t\geq 0$.
\end{remark}

Note that Equation \eqref{eq:algo} resembles a continuous-time version of a stochastic approximation algorithm with constant step size $1/m$. Thus, the parameter $m$ determines the learning rate of the update rule. In order to analyze the algorithm, we consider the asymptotic regime where $m \to \infty$. As $m$ gets larger, the service rates are updated at a slower pace, but with less randomness. Thus, in the spirit of stochastic approximation, if we accelerate time by a factor of $m$, we obtain a deterministic limiting trajectory with a constant learning rate. This is formalized in the following theorem.

\begin{theorem}\label{thm:process_mconv}
If $\boldsymbol{\mu}^{n, m}(0) \to \boldsymbol{\mu}^n(0)$ weakly as $m \to \infty$, then the sequence of stochastic trajectories $\big\{ (\bmu^{n,m}(mt))_{t\geq 0} \big\}_{m \in \N}$ is relatively compact with respect to the weak topology. Moreover, all limit points $(\bmu^n(t))_{t\geq 0}$ satisfy
\begin{equation}\label{eq:limit}
    \mu_v^n(t) = \mu_v^n(0) - \int\limits_0^t \left[g'(\mu_v^n(s)) + h'(\mu_v^n(s)) \left( \E_{\boldsymbol{\mu}^n(t)}\left[ I_v^n(\infty) \right] + \frac{1}{\mu_v^n(s)} \right)\right] \diff s,
\end{equation}
where $\E_{\boldsymbol{\mu}}[ I_v^n(\infty) ]$ denotes the expected idle time of server $v \in [n]$ in steady state in a system where the service rate vector is fixed at $\boldsymbol{\mu}$.
\end{theorem}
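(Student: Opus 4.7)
The plan is to recognize \eqref{eq:algo} as a continuous-time stochastic approximation scheme with constant step size $1/m$ and to analyze the $m\to\infty$ limit by a two-time-scale averaging argument. After rescaling time by $m$, the rescaled rate process $\bar{\bmu}^{n,m}(t) := \bmu^{n,m}(mt)$ satisfies
\[
\bar\mu_v^{n,m}(t) = \mu_v^{n,m}(0) - \int_0^t \Bigl[g'(\bar\mu_v^{n,m}(s)) + h'(\bar\mu_v^{n,m}(s))\Bigl(\bar I_v^{n,m}(s) + \tfrac{1}{\bar\mu_v^{n,m}(s)}\Bigr)\Bigr]\diff s,
\]
where $\bar I_v^{n,m}(s) := I_v^{n,m}(ms)$. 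On this slow time scale, the rates are nearly frozen, whereas the queue-and-idle-time processes evolve on their natural, fast scale and should equilibrate to the stationary distribution corresponding to the current rates.

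First, I would establish relative compactness of $\{\bar{\bmu}^{n,m}\}_m$ in the Skorokhod space. By Remark \ref{rem:bounds} the paths are confined to $[0,\mu_+]^n$, giving compact containment. To bound path increments $|\bar\mu_v^{n,m}(t) - \bar\mu_v^{n,m}(s)|$ for $t-s < \delta$, I would couple each server's idle-time process with a dominating M/M/1-type system receiving all arrivals (rate $\lambda n$) and serving at the current rate $\bar\mu_v^{n,m}$, producing uniform-in-$m$ moment bounds on $\bar I_v^{n,m}(u)$. Combined with the boundedness of $g'$ and $h'$ on $[0,\mu_+]$, this yields an $L^1$ modulus of continuity of order $\delta$ and Aldous' criterion applies.

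Second, I would identify any subsequential weak limit $\bmu^n$ with the ODE \eqref{eq:limit} via averaging. Partition $[0,T]$ into slow-time intervals of length $\delta$; on each such interval the rates drift by $O(\delta)$, whereas the corresponding fast-scale interval $[im\delta, (i+1)m\delta]$ has length $m\delta \to \infty$. I would couple the queue process over this interval with the stationary version of the queue driven by the frozen rate vector $\bar{\bmu}^{n,m}(i\delta)$. Since the frozen-rate system on $n$ servers is a geometrically ergodic Markov process, the time average of $\bar I_v^{n,m}(s)$ over $[i\delta,(i+1)\delta]$ converges in probability to $\E_{\bar{\bmu}^{n,m}(i\delta)}[I_v^n(\infty)]$ as $m\to\infty$. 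Using continuity of $\bmu \mapsto \E_{\bmu}[I_v^n(\infty)]$ together with Lipschitz continuity of $g'$ and $h'$, and bounding the error incurred by replacing $\bar{\bmu}^{n,m}(s)$ by $\bar{\bmu}^{n,m}(i\delta)$, the integral equation for $\bar{\bmu}^{n,m}$ converges (first $m\to\infty$, then $\delta\to 0$) to \eqref{eq:limit}, and the continuous mapping theorem finishes the identification.

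The main obstacle is the averaging in the second step, for two reasons. First, one needs mixing rates for the frozen-rate system that are uniform in $\bmu$ over compact subsets of $(0,\mu_+]^n$, together with uniform integrability of the idle times; these can be obtained by a Foster--Lyapunov drift argument combined with a coupling to a homogeneous M/M/$n$-type system, but special attention is required near the boundary $\{\mu_v = 0\}$, where the coupling degrades. Second, continuity of $\bmu \mapsto \E_{\bmu}[I_v^n(\infty)]$ on $(0,\mu_+]^n$ must be verified via coupling or standard perturbation arguments for continuous-time Markov processes; the convention $h'(0)/0 = \lim_{x\to 0} h'(x)/x$ combined with the non-positivity of the integrand at $\mu_v = 0$ noted in Remark \ref{rem:bounds} ensures that any limit trajectory remains in $[0,\mu_+]^n$, so that boundary excursions do not spoil the averaging.
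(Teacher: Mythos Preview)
Your outline is a legitimate route to the result, but it is genuinely different from the paper's argument. The paper does not use Aldous' criterion or an explicit freezing-and-coupling scheme. Instead it works in Kurtz's occupation-measure framework: it defines the random measure
\[
\nu^{n,m}(\Gamma\times d\boldsymbol{x}\times d\boldsymbol{i}) := \int_\Gamma \delta_{\boldsymbol{X}^{n,m}(ms)}(d\boldsymbol{x})\,\delta_{\boldsymbol{I}^{n,m}(ms)}(d\boldsymbol{i})\,ds,
\]
shows $\{\nu^{n,m}\}_m$ is relatively compact via \cite[Lemma~1.3]{kurtz1992averaging} (using only the tail bound $\P(I_v^{n,m}(ms)>K)\le e^{-\lambda K}$), and then rewrites $\bar\mu_v^{n,m}$ as an integral against $\nu^{n,m}$ so that relative compactness of the rates follows by continuous mapping. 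For identification, \cite[Lemma~1.4]{kurtz1992averaging} disintegrates any limit $\nu^n$ as $\int_0^t \pi_s\,ds$, and a martingale decomposition on test functions $f(\boldsymbol{x},\boldsymbol{i})$ combined with Doob's inequality shows that $\pi_t$ solves the stationary generator equation, hence is the steady-state law at rate vector $\bmu^n(t)$ by \cite[Prop.~9.2]{kurtz1986markov}.

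The practical difference is that the paper's route never needs the two ingredients you flag as obstacles: neither uniform-in-$\bmu$ mixing rates for the frozen system, nor continuity of $\bmu\mapsto \E_{\bmu}[I_v^n(\infty)]$. The occupation-measure argument identifies $\pi_t$ as stationary directly in the limit, without ever comparing to a pre-limit frozen chain; this is exactly why the paper can afterwards remark that uniqueness of solutions to \eqref{eq:limit} is not guaranteed. Your approach, by contrast, hinges on that continuity to pass from the frozen average $\E_{\bar{\bmu}^{n,m}(i\delta)}[I_v^n(\infty)]$ to $\E_{\bmu^n(s)}[I_v^n(\infty)]$. For this particular model (finite state space $\{0,1\}^n$ for the queue, idle times stochastically dominated by $\mathrm{Exp}(\lambda)$) such continuity does hold on $(0,\mu_+]^n$ by standard perturbation arguments, so your program can be completed; but it is extra work that the Kurtz machinery avoids entirely. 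What your approach buys is that it is more elementary and self-contained, not relying on the measure-valued results of \cite{kurtz1992averaging,kurtz1986markov}.
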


The proof is based on time-scale separation ideas introduced by Kurtz~\cite{kurtz1992averaging}, and it is given in Section~\ref{sec:proof_process_mconv}. 

\begin{remark} \normalfont
    Since the expected idle time $\E_{\boldsymbol{\mu}^n}\left[ I_v^n(\infty) \right]$ is an unknown function of the rate vector $\boldsymbol{\mu}^n$, uniqueness of solutions to Equation \eqref{eq:limit} cannot be guaranteed. Without this uniqueness, limiting trajectories may be different (even if they all satisfy Equation \eqref{eq:limit}), and thus the convergence of the sequence of trajectories cannot be guaranteed either. Fortunately, as we will see, this does not pose a problem for our subsequent analysis, where we prove the convergence of $\bmu^n(t)$ in Equation~\eqref{eq:limit} to one of the asymptotically optimal vector of service rates, as $n\to\infty$.
\end{remark}

\begin{remark}\label{rem:mu-up-low}\normalfont
Since the instantaneous arrival rate to any idle server is at least $\lambda$, regardless of the system occupancy, it follows that $\E_{\boldsymbol{\mu}}\left[ I_v^n(\infty) \right] \leq \lambda$. Combining this with Assumption~\ref{ass:opt} (vi) we get that $g'(\mu_-) + h'(\mu_-) \big(\E_{\boldsymbol{\mu}_-}[ I_v^n(\infty) ] + 1/\mu_-\big) \leq g'(\mu_-) + h'(\mu_-) \left(\lambda + 1/\mu_-\right) \leq 0$. Therefore, the integrated function in Equation \eqref{eq:limit} is non-negative when $\mu_v^n(t)=\mu_-$ and thus, if $\mu_v^n(0) \geq \mu_-$, then $\mu_v^n(t) \geq \mu_-$ for all $t\geq 0$. We combine this with Remark~\ref{rem:bounds} to conclude that, if $\mu^n(0) \in [\mu_-,\mu_+]^n$, then $\mu^n(t) \in [\mu_-,\mu_+]^n$ for all $t\geq 0$.
\end{remark}

\subsection{Idle-time analysis of large-scale heterogeneous server systems}\label{sec:idleAnalysis}

In order to understand the limiting dynamics given by Equation~\eqref{eq:limit}, we need to get characterize $\E_{\boldsymbol{\mu}}\left[ I_v^n(\infty) \right]$, which is the expected idle time of a system where the service rates are fixed at $\mu_v$ for $v \in [n]$. This poses a major challenge, as the dynamics of a fully heterogeneous system appear to be intractable. In particular, even if the dispatching policy chooses an idle server uniformly at random regardless of its processing rate, the idle time of a server $v$ depends on the number of idle servers in the system \emph{conditioned on $v$ being idle}, thus breaking the symmetry of the dispatching policy. Although a closed-form expression is out of reach, we prove in Theorem~\ref{thm:idle} below that, in the limit as $n\to\infty$, the expected idle time for different servers becomes equal. 

\begin{theorem} \label{thm:idle}
There exists constants $c^n(\bmu^n) \in [0,1]$, $n\in \N$, which depend only on $n$ and $\bmu^n$ such that
\begin{equation}
\begin{multlined}
    \max_{v \in [n]} \left\lvert \E_{\bmu^n}\left[ I^n_v(\infty) \right] - \frac{1 - c^n(\bmu^n)}{\lambda} \right\rvert \to 0 \text{ as } n \to \infty.
\end{multlined}
\end{equation}
Moreover, if $\frac{1}{n}\sum\limits_{v\in[n]} \mu^n_v \leq \lambda$, we have $c^n(\bmu^n)=1$, and
if $\liminf\limits_{n\to\infty} \frac{1}{n} \sum\limits_{v\in[n]} \mu^n_v > \lambda$, we have
\begin{equation}
    \lim_{n\to\infty} \frac{\lambda}{\max\limits_{v\in[n]} \mu^n_v} \leq \liminf_{n\to\infty} c^n(\bmu^n) \leq \limsup_{n\to\infty} c^n(\bmu^n) \leq \lim_{n\to\infty} \frac{\lambda}{\min\limits_{v\in[n]} \mu^n_v}.
\end{equation}
\end{theorem}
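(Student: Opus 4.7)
The plan is to first show that the number of idle servers $N_I^n$ concentrates on a deterministic fraction $(1-c^n(\bmu^n))\,n$ as $n\to\infty$, then deduce uniform convergence of the per-server mean idle times by using the fact that under JIQ each idle server is selected with instantaneous rate $\lambda n/N_I^n$, and finally bound $c^n(\bmu^n)$ using global flow balance. To handle heterogeneity in Step~1, I would track the measure-valued process $\xi_t^n := \frac{1}{n}\sum_{v\in[n]:\,B_v(t)=1}\delta_{\mu_v^n}$ on $[\mu_-,\mu_+]$, whose dynamics combine ``deaths'' at location $\mu_v^n$ at rate $\mu_v^n$ for each busy server $v$, and ``births'' at rate $\lambda n$ (whenever at least one server is idle) placed according to the empirical distribution of idle-server rates. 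Standard martingale and tightness arguments for density-dependent population processes yield relative compactness of $\{\xi^n\}$ in the Skorokhod topology, and any weak limit satisfies a deterministic measure-valued evolution equation with a unique, globally attracting fixed point. Defining $c^n(\bmu^n)$ as the total mass of this stationary distribution gives $N_I^n/n \to 1-c^n(\bmu^n)$ in probability.

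\textbf{From concentration to uniform idle times.} Fix a server $v$ and let $\rho_v^n = \P_{\bmu^n}(v\text{ busy})$ in stationarity. Local flow balance at $v$ reads
\[\mu_v^n \rho_v^n = \lambda n\, \E_{\bmu^n}\!\left[\mathbf{1}\{v\text{ idle}\}/N_I^n\right] = \lambda n\,(1-\rho_v^n)\, \E_{\bmu^n}\!\left[1/N_I^n \,\big|\, v\text{ idle}\right],\]
while the alternating-renewal identity for $v$ (using that busy periods are $\mathrm{Exp}(\mu_v^n)$ and that idle periods become asymptotically exponential in the limit, so that both the inspection-paradox age and the previous-idle-length conditional expectation collapse to the mean) gives $\E[I_v^n(\infty)] = (1-\rho_v^n)/(\mu_v^n\rho_v^n)$. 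Combining these two identities rewrites the expected idle time as $\E[I_v^n(\infty)] = 1/(\lambda n\,\E[1/N_I^n\mid v\text{ idle}])$. A direct coupling argument shows that conditioning on $v$ being idle perturbs $\E[1/N_I^n]$ by only $O(1/n)$ uniformly in $v$, and Step~1 together with uniform integrability of $n/N_I^n$ (from a crude tail bound $\P(N_I^n \leq \epsilon n)\to 0$ in the non-overloaded regime) yields $n\,\E[1/N_I^n \mid v\text{ idle}] \to 1/(1-c^n(\bmu^n))$ uniformly in $v$, which is exactly the claim.

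\textbf{Bounds on $c^n$ and the main obstacle.} When $\frac{1}{n}\sum_v \mu_v^n \leq \lambda$, the maximum possible throughput $\sum_v \mu_v^n$ is at most the arrival rate $\lambda n$, so a coupling with an all-busy auxiliary system forces $\P(\text{all servers busy})\to 1$, and thus $c^n(\bmu^n)=1$. In the strictly underloaded regime, the global flow balance $\sum_v \mu_v^n \rho_v^n = \lambda n\,\P(N_I^n \geq 1)$ combined with $c^n<1$ (which implies $\P(N_I^n\geq 1)\to 1$) gives $\frac{1}{n}\sum_v \mu_v^n \rho_v^n \to \lambda$; sandwiching by $\min_v \mu_v^n$ and $\max_v \mu_v^n$ and using $\frac{1}{n}\sum_v \rho_v^n \to c^n$ yields the two required inequalities. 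I expect the main obstacle to be Step~1: standard propagation-of-chaos tools for mean-field systems do not directly apply under full heterogeneity because servers are individually identifiable, and one has to carefully argue well-posedness and the existence of a globally attracting equilibrium for a measure-valued dynamical system whose birth intensity is discontinuous at the saturation boundary $\{\xi(\mathbb{R}_+)=1\}$.
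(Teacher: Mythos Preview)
Your high-level structure matches the paper's: establish concentration of the idle fraction via a measure-valued auxiliary process, then deduce uniform convergence of $\E[I_v^n(\infty)]$, then bound $c^n$. But there is a genuine gap in Step~2.

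The identity $\E[I_v^n(\infty)]=(1-\rho_v^n)/(\mu_v^n\rho_v^n)$ is \emph{not} exact. Recall that $I_v^n(\infty)$ records the \emph{last completed} idle period when $v$ is busy and the \emph{age} of the current idle period when $v$ is idle. By alternating renewal, the mean idle period $m_I$ does equal $(1-\rho_v)/(\mu_v\rho_v)$, and $\E[I_v\mid\text{busy}]=m_I$ exactly (busy periods are exponential and independent of the preceding idle period). But $\E[I_v\mid\text{idle}]=\E[J^2]/(2m_I)$ by size-biasing, which equals $m_I$ only when idle periods are exponential. You invoke ``idle periods become asymptotically exponential,'' but this is precisely what you are trying to prove from the concentration of $N_I^n/n$; you would need a separate argument that $\E[J^2]\to 2m_I^2$ uniformly in $v$, together with the decoupling claim $\E[1/N_I^n\mid v\text{ idle}]=\E[1/N_I^n]+O(1/n)$. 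The latter is not obvious under full heterogeneity, since conditioning on a specific server being idle can correlate nontrivially with the global state. The paper sidesteps both issues by using three exact generator identities in stationarity (Lemma~A.1), the key one being $\E\big[\lambda n(1-X_v)I_v/\sum_{v'}(1-X_{v'})\big]=\E[1-X_v]$, which directly yields $\E[(1-X_v)I_v]\approx (1-c)\E[1-X_v]/\lambda$ once $\tfrac{1}{n}\sum_{v'}(1-X_{v'})$ concentrates, with no need for asymptotic exponentiality or decoupling.

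Two smaller differences worth noting. For Step~1 the paper does \emph{not} prove global attractiveness of the measure-valued fixed point; instead it combines a finite-horizon Gr\"onwall bound between the empirical process and the deterministic flow $\bar\phi_t$ with a monotone coupling giving exponential mixing of the stochastic system, and defines $c^n(\bmu^n)$ as $\int\bar\phi_{\log(n)/\alpha}(\diff x)$ rather than the equilibrium mass. This avoids the well-posedness and global-stability questions you flag as the main obstacle. For Step~3 your flow-balance sandwich $\min_v\mu_v\cdot\tfrac{1}{n}\sum_v\rho_v\le \tfrac{1}{n}\sum_v\mu_v\rho_v\to\lambda\le \max_v\mu_v\cdot\tfrac{1}{n}\sum_v\rho_v$ is arguably cleaner than the paper's ODE-based Lemma~4.10.
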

The proof of Theorem~\ref{thm:idle} is technically involved and is given in Section \ref{sec:proof_subcrit_idle}. The key intermediate step is a concentration result on the fraction of busy servers.

\subsection{Asymptotic optimality of the proposed algorithm}
Although we do not have an explicit expression of the constants $c^n(\bmu^n)$ of Theorem \ref{thm:idle} in general, we do have an expression for the case when the system is homogeneous (which is the case for the asymptotically optimal solution). We build upon this result to establish the next crucial lemma, which states that the deterministic trajectories converge to an asymptotically homogeneous system.

\begin{lemma}
\label{lem:maxmin_conv}
Let $\bmu^n(t)$ be as defined in Theorem \ref{thm:process_mconv}. Then,
\[ \max_{v,v' \in [n]} \left|\mu_v^n(t) - \mu_{v'}^n(t)\right| \to 0, \text{ as } n \to \infty \text{ and } t \to \infty, \]
regardless of the order in which the two limits are taken.
\end{lemma}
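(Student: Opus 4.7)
The strategy is to use Theorem \ref{thm:idle} to reduce the limiting ODE \eqref{eq:limit} to an (approximately) decoupled system whose right-hand side is a strictly monotone function of the individual service rate, yielding a contraction in the pairwise differences of the $\mu_v^n$'s. Define
\[ F(\mu, c) := g'(\mu) + h'(\mu)\left(\frac{1-c}{\lambda} + \frac{1}{\mu}\right), \]
and write $c^n(t) := c^n(\bmu^n(t))$ and $\eta_v^n(t) := \E_{\bmu^n(t)}[I_v^n(\infty)] - (1-c^n(t))/\lambda$, so that Equation~\eqref{eq:limit} reads $\dot\mu_v^n(t) = -F(\mu_v^n(t), c^n(t)) - h'(\mu_v^n(t))\,\eta_v^n(t)$. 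Theorem~\ref{thm:idle} gives $\max_v |\eta_v^n(t)| \to 0$ as $n\to\infty$, and Remark~\ref{rem:mu-up-low} ensures the trajectories stay in $[\mu_-,\mu_+]^n$, so $h'$ is bounded on the relevant range by some $M<\infty$. I would first verify that $\partial_\mu F(\mu, c) \geq \sigma_g$ uniformly in $c \in [0,1]$ and $\mu \in [\mu_-, \mu_+]$: direct computation yields
\[ \partial_\mu F(\mu, c) = g''(\mu) + h''(\mu)\cdot\tfrac{1-c}{\lambda} + \tfrac{d}{d\mu}\bigl[h'(\mu)/\mu\bigr], \]
and all three summands are nonnegative by Assumption~\ref{ass:opt}(ii) and (iii), with the first at least $\sigma_g$.

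Assuming the convergence in Theorem~\ref{thm:idle} holds uniformly over $\bmu^n \in [\mu_-,\mu_+]^n$, pick $\epsilon_n \to 0$ with $\sup_{t \geq 0} \max_v |\eta_v^n(t)| \leq \epsilon_n$, and consider $D^n(t) := \max_{v,v'\in [n]} |\mu_v^n(t) - \mu_{v'}^n(t)|$. For any pair $(v, v')$ attaining this max at time $t$ with $\mu_v^n(t) \geq \mu_{v'}^n(t)$, the mean value theorem combined with the lower bound on $\partial_\mu F$ (both $F$-evaluations use the same $c^n(t)$ since $c^n$ depends only on $n$ and $\bmu^n(t)$) gives
\[ \frac{d}{dt}(\mu_v^n - \mu_{v'}^n) \leq -\sigma_g (\mu_v^n - \mu_{v'}^n) + 2 M \epsilon_n. \]
This yields the Dini-derivative bound $\tfrac{d^+}{dt} D^n(t) \leq -\sigma_g D^n(t) + 2 M \epsilon_n$, and Grönwall's inequality produces
\[ D^n(t) \leq D^n(0)\,e^{-\sigma_g t} + \frac{2 M \epsilon_n}{\sigma_g}. \]
Since $D^n(0) \leq \mu_+ - \mu_-$ is uniformly bounded, sending $n \to \infty$ and $t \to \infty$ in either order gives $D^n(t) \to 0$, which is the statement of the lemma.

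\textbf{Main obstacle.} The substantive point is establishing the uniformity of the error in Theorem~\ref{thm:idle} over the set $[\mu_-, \mu_+]^n$ (uniform in both $\bmu^n$ and implicitly in $n$ via the rate $\epsilon_n$). The clean Grönwall contraction above relies on a single $\epsilon_n \to 0$ bounding $\max_v|\eta_v^n(t)|$ along the entire trajectory; if the proof of Theorem~\ref{thm:idle} only yields convergence along each prescribed sequence $\bmu^n$, one must either revisit that proof to extract an explicit quantitative bound that is uniform in $\bmu^n\in[\mu_-,\mu_+]^n$, or argue by contradiction, extracting a sequence of times $t_n$ along which $D^n(t_n)$ fails to vanish and deriving a contradiction by applying Theorem~\ref{thm:idle} to the sampled configurations $\bmu^n(t_n)$. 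Once this uniformity is in hand, the contraction argument is routine.
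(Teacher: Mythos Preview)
Your approach is essentially identical to the paper's: both invoke Theorem~\ref{thm:idle} to replace $\E_{\bmu^n(t)}[I_v^n(\infty)]$ by $(1-c^n(\bmu^n(t)))/\lambda$ up to an error $\varepsilon(n)$ uniform in $v$, exploit the strong monotonicity $\partial_\mu F \geq \sigma_g$ (the paper phrases this via $g'',h''\geq \sigma_g,\sigma_h$ and $h'(x)/x$ non-decreasing) to derive a differential inequality for $\max_v \mu_v^n(t) - \min_v \mu_v^n(t)$, and conclude by Gr\"onwall. The uniformity-in-$\bmu^n$ you flag as the main obstacle is precisely what the paper asserts when it writes ``there exists a function $\varepsilon(n)\in o(1)$'' depending only on $n$; the paper does not dwell on it, but it is backed by the explicit quantitative bounds of Lemmas~\ref{thm:supercrit_idle} and~\ref{thm:subcrit_idle}.
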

The proof of Lemma~\ref{lem:maxmin_conv} consists of using the bound given in Theorem \ref{thm:idle} to bound the derivative of the difference of the rates, and show that this difference converges to zero. The proof is given in Section \ref{sec:proof_maxmin_conv}.\\

Before stating our last result, we provide an intuitive explanation as of why theorems \ref{thm:idle} and \ref{lem:maxmin_conv} imply that the rates of all servers converge to the asymptotically optimal rate $\mu^*$, as $n$ and $t$ go to infinity. First, note that Theorems \ref{thm:idle} and \ref{lem:maxmin_conv} imply that, for every $\varepsilon>0$, there exist $n_0$ and $t_0$ large enough so that $\max_{v \in [n]} \big\lvert \E_{\bmu^n}\big[ I^n_v(\infty) \big] - \big[1 - c^n(\bmu^n)\big]/\lambda \big\rvert < \varepsilon$ and $\max_{v,v' \in [n]} \left|\mu_v^n(t) - \mu_{v'}^n(t)\right|<\varepsilon$ for all $n\geq n_0$ and $t\geq t_0$. With this in mind, consider a system where the expected idle times of different servers are all equal and where the service rates are homogeneous after some time. In this hypothetical system, the update rule now equals
\begin{align*}
    \frac{\diff \mu_v^n(t)}{\diff t}
    &= - g'(\mu_v^n(t)) - h'(\mu_v^n(t)) \left( \frac{1 - \lambda / \mu_v^n(t)}{\lambda} - \frac{1}{\mu_v^n(t)} \right) 
    = - g'(\mu_v^n(t)) - \frac{h'(\mu_v^n(t))}{\lambda}.
\end{align*}
Notice that the update rule equals the negative gradient of the cost function, as one would expect in gradient descent. Therefore, we expect the service rate to converge to the optimal solution. This intuition is formalized in the theorem below.

\begin{theorem} \label{thm:mu_to_mustar}
Let $\mu^*$ be as defined in Theorem~\ref{thm:optimum} and  $\boldsymbol{\mu}^n(t)$ be as defined in Theorem \ref{thm:process_mconv}. Then,
\begin{equation}
    \max_{v \in [n]} \left\lvert \mu_v^n(t) - \mu^* \right\rvert
    \to 0 \text{ as } n \to \infty \text{ and } t \to \infty,
\end{equation}
where the limits are taken either jointly, or first as $n\to\infty$ and then as $t\to\infty$.
\end{theorem}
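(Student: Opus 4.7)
The plan is to formalize the gradient-descent heuristic sketched immediately before the theorem, using
\[
    \Phi(\mu) := \lambda g(\mu) + h(\mu)
\]
as a Lyapunov function. By Assumption~\ref{ass:opt}(ii), $\Phi$ is $\sigma$-strongly convex with $\sigma := \lambda\sigma_g + \sigma_h > 0$; its unique minimizer is $\mu^*$, and it satisfies the Polyak--Lojasiewicz (PL) inequality $\Phi(\mu) - \Phi(\mu^*) \leq \Phi'(\mu)^2/(2\sigma)$. By Remark~\ref{rem:mu-up-low}, assuming a suitable initial condition, the trajectories stay in $[\mu_-,\mu_+]^n$ for all $t\geq 0$, so that $g'$, $h'$, $\Phi'$, and $\mu\mapsto 1/\mu$ are uniformly bounded.

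Next, I would work with the averaged Lyapunov
\[
    W^n(t) := \frac{1}{n}\sum_{v\in[n]} \big[\Phi(\mu_v^n(t)) - \Phi(\mu^*)\big] \geq 0,
\]
and differentiate along Equation~\eqref{eq:limit}, using the identity $g'(\mu) + h'(\mu)/\lambda = \Phi'(\mu)/\lambda$ to split the result into an ``ideal'' part and a perturbation:
\[
    \dot W^n(t) = -\frac{1}{\lambda n}\sum_{v\in[n]}\Phi'(\mu_v^n(t))^2 + \mathcal{E}^n(t),
\]
with
\[
    \mathcal{E}^n(t) := \frac{1}{n}\sum_{v\in[n]}\Phi'(\mu_v^n(t))\, h'(\mu_v^n(t))\left[\frac{1}{\lambda} - \E_{\bmu^n(t)}\!\left[I_v^n(\infty)\right] - \frac{1}{\mu_v^n(t)}\right].
\]
The PL inequality then yields $\dot W^n(t) \leq -(2\sigma/\lambda)\, W^n(t) + \mathcal{E}^n(t)$. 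Once one has shown $|\mathcal{E}^n(t)| < \varepsilon$ for all $n \geq n_0$ and $t \geq t_1$, Gronwall's inequality gives
\[
    W^n(t) \leq W^n(t_1)\, e^{-2\sigma(t-t_1)/\lambda} + \frac{\varepsilon\lambda}{2\sigma},
\]
so $W^n(t) \to 0$ in both orderings of the limit. Strong convexity implies $\frac{1}{n}\sum_v(\mu_v^n - \mu^*)^2 \leq 2W^n(t)/\sigma \to 0$, whence Jensen controls $|\bar\mu^n(t) - \mu^*|$; combining with Lemma~\ref{lem:maxmin_conv} via the triangle inequality yields $\max_v|\mu_v^n(t) - \mu^*| \to 0$.

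The essential step is thus the perturbation bound $\mathcal{E}^n(t)\to 0$. Here I would combine Lemma~\ref{lem:maxmin_conv}, which forces $\max_v\mu_v^n(t) - \min_v\mu_v^n(t) \to 0$, with Theorem~\ref{thm:idle}: in the subcritical regime $\bar\mu^n(t) > \lambda$, Theorem~\ref{thm:idle} squeezes $c^n(\bmu^n(t))$ between $\lambda/\max_v\mu_v^n$ and $\lambda/\min_v\mu_v^n$, both tending to $\lambda/\bar\mu^n(t)$, so $\E_{\bmu^n(t)}[I_v^n(\infty)] \approx 1/\lambda - 1/\bar\mu^n(t) \approx 1/\lambda - 1/\mu_v^n(t)$ uniformly in $v$; multiplying by the uniformly bounded factor $\Phi'(\mu_v^n)\,h'(\mu_v^n)$ then yields $\mathcal{E}^n(t)\to 0$. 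The hard part will be justifying this rigorously: Theorem~\ref{thm:idle} is a \emph{static} statement, so one must argue that its convergence is sufficiently uniform over the family of rate vectors $\bmu^n(t)$ visited by the trajectory. A subsidiary issue is the potentially overloaded phase $\bar\mu^n(t)\leq\lambda$, where Theorem~\ref{thm:idle} instead gives $c^n = 1$ and the gradient-descent picture breaks down; here Assumption~\ref{ass:opt}(v), equivalent to $\Phi'(\lambda)<0$, can be used to show that the drift of $\bar\mu^n$ is strictly positive at $\bar\mu^n = \lambda$, so the system reaches and remains in the subcritical regime in finite time, after which the Lyapunov analysis above applies.
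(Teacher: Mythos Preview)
Your approach is correct and rests on the same two pillars as the paper's proof---Lemma~\ref{lem:maxmin_conv} for asymptotic homogeneity and Theorem~\ref{thm:idle} for the idle-time characterization---but the execution differs in two respects. First, you take as Lyapunov the \emph{averaged} function gap $W^n=\frac{1}{n}\sum_v[\Phi(\mu_v^n)-\Phi(\mu^*)]$ and invoke the PL inequality, whereas the paper works directly with the \emph{per-server} gradient magnitude $|\Phi'(\mu_v^n)|=|\lambda g'(\mu_v^n)+h'(\mu_v^n)|$ and shows it satisfies the Gronwall-type inequality $\frac{d}{dt}|\Phi'(\mu_v^n)|\le -c_1|\Phi'(\mu_v^n)|+\varepsilon(n,t)$. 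The paper's choice is slightly more direct, since it immediately yields $\max_v|\mu_v^n-\mu^*|\to 0$ without your detour through Jensen and a second appeal to Lemma~\ref{lem:maxmin_conv}. Second, for the supercritical phase $\bar\mu^n\le\lambda$ you propose to show that the positive drift (from $\Phi'(\lambda)<0$ and $\E[I_v^n]\approx 0$) forces the system into the subcritical regime in bounded time, after which your Lyapunov argument applies; the paper instead establishes the same Gronwall inequality \emph{in both regimes} (handling each case separately), so it never needs to argue about crossing the boundary. Your exit-time argument is more geometrically transparent, while the paper's uniform treatment is a bit cleaner bookkeeping-wise. Both routes share the same delicate point you correctly flag: Theorem~\ref{thm:idle} is stated for a fixed rate vector, and one needs its conclusion uniformly over the trajectory $\bmu^n(t)\in[\mu_-,\mu_+]^n$; in the paper this is handled implicitly via the explicit quantitative bounds of the appendix lemmas, which depend only on $\mu_\pm,\lambda$ (and, in the subcritical case, on $\delta$).
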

From a high level, Theorem~\ref{thm:mu_to_mustar} states that $\mu^n(t)$, which is an asymptotic approximation of the original (stochastic) rate vector $\mu^{n,m}(t)$, converges to the optimal values in time, when the number of servers $n$ goes to infinity. This indicates that the stochastic algorithm will approximately solve the optimization problem given in Equation \eqref{eq:costfunction} when $m$ and $n$ are large.
The proof of Theorem~\ref{thm:mu_to_mustar} is given in Section \ref{sec:proof_mu_to_mustar}.

\section{Numerical results} \label{sec:sim}

In order to showcase the performance of our algorithm relative to the optimal rates, in this section we present extensive numerical experiments. 
We choose $g(\mu) = 1 / \mu$ and $h(\mu) = 0.1 \mu^2$ for the cost function, in which case the asymptotically optimal rate is $\mu^*=\sqrt[3]{5\lambda}$.

\paragraph{Validation of large-system asymptotics.}
First, we fix $\lambda=0.8$ and $m=500$, and we vary the number of servers from $n=2$ to $n=100$. In Figure \ref{fig:servers}
we can see that the average service rate in steady state becomes closer to the asymptotically optimal one as $n$ increases, being already quite close for $n=25$. The reason is that the bias in the estimator of the idle time vanishes as $n\to\infty$, and so the servers can accurate learn the optimal service rate. 

\begin{figure}[h!]
    \centering
    \includegraphics[width=0.49\textwidth]{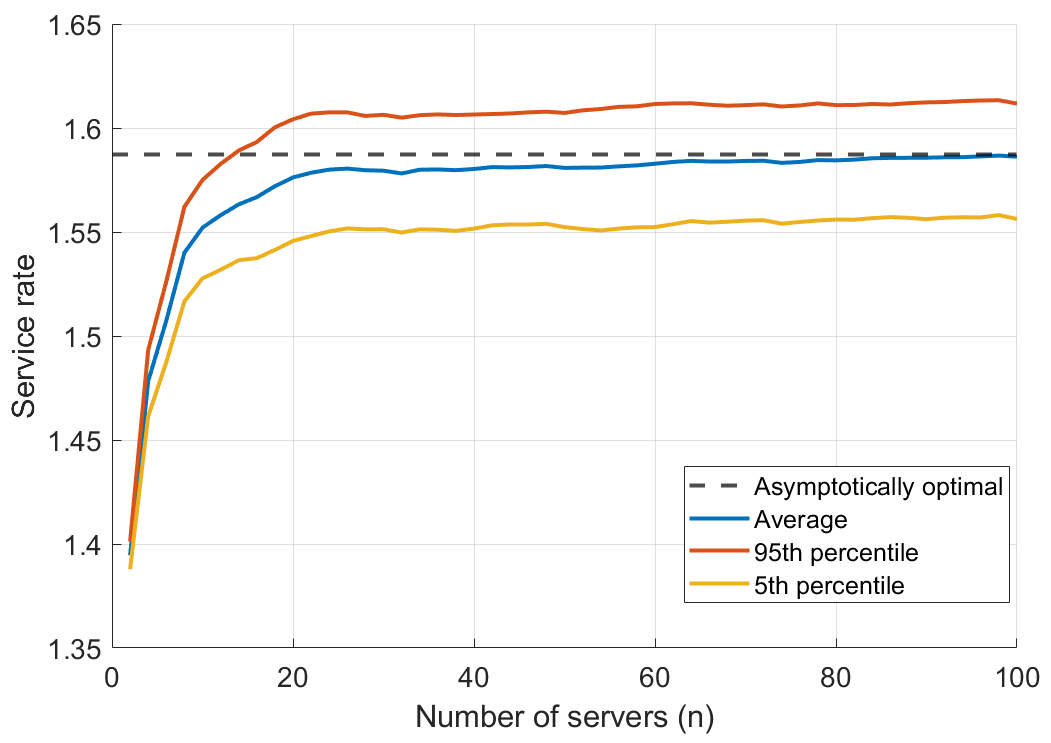}
    \caption{Steady state service rates for different number of servers $n$.}
    \label{fig:servers}
\end{figure}

\paragraph{Implications of a constant step-size.}
On the other hand, note that in Figure \ref{fig:servers} the 5th and 95th percentiles of the rates are bounded away from their average, even as $n$ increases. While this phenomenon is not apparent in our asymptotic analysis, where we started by taking the limit as the step size ($1/m$) goes to zero, it is the expected behavior of stochastic approximation algorithms with finite step size. 
Indeed, with such algorithms, the state converges to a steady state distribution around the optimal point, where the variance is an increasing function of the step size. To showcase this, we fix $\lambda=0.8$, $n=100$, and vary the the step size from $m=10$ to $m=500$. In Figure \ref{fig:step_size} we can see that the 5th and 95th percentiles become closer to the average, as $m$ increases. Since we chose $n=100$, the average rates almost coincide with the asymptotically optimal one.

\begin{figure}[h!]
     \centering
     \begin{subfigure}[b]{0.49\textwidth}
         \centering
         \includegraphics[width=\textwidth]{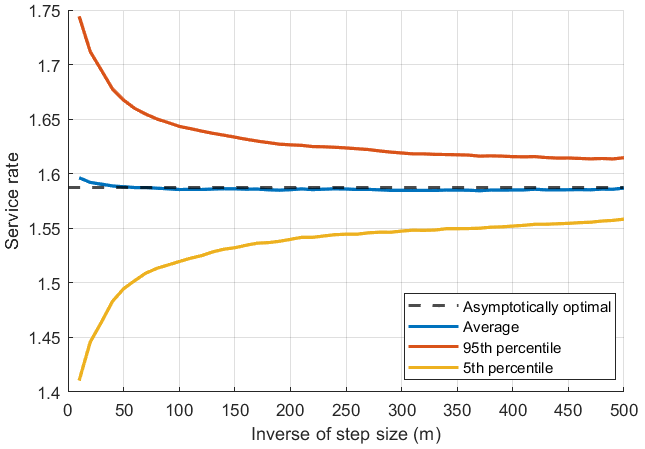}
         \caption{Steady state service rates for different values of the inverse step size ($m$).}
         \label{fig:step_size}
     \end{subfigure}
     \hfill
     \begin{subfigure}[b]{0.49\textwidth}
         \centering
         \includegraphics[width=\textwidth]{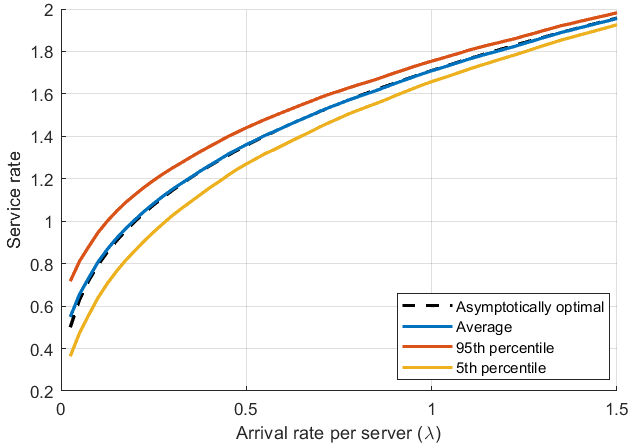}
         \caption{Steady state service rates for different values of the arrival rate per server ($\lambda$).}
         \label{fig:lambda}
     \end{subfigure}
        \caption{Influence of step size and arrival rate.}
        \label{fig:group-1}
\end{figure}


\paragraph{Influence of the arrival rate.}
Next, we fix $n=100$, $m=100$, and vary the arrival rate from $\lambda=0.025$ to $\lambda=1.5$. In Figure \ref{fig:lambda} we see that, as $\lambda$ increases, so does the average, in lockstep with the asymptotically optimal service rate, which is $\mu^*=\sqrt[3]{5\lambda}$. Moreover, we can see that the 5th and 95th percentiles become closer to the average as $\lambda$ increases. This is because, for higher values of $\lambda$, the derivatives of the objective function around the equilibrium are steeper, and thus the deviations around the average are reduced (interestingly, this is also observed in Stochastic Gradient Descent (SGD) with constant step sizes).



\paragraph{Extension to the infinite-buffer scenario.}
Finally, we verify that our algorithm also works as intended when we have infinite instead of unit buffers. In these simulations we use $n=50$, $m=500$, $\lambda=0.8$, and i.i.d. initial service rates $\mu_v(0) \sim \text{Unif}[0,2]$. In Figures \ref{fig:rate_unit_buffer} and \ref{fig:rate_inf_buffer} we see that, when the system starts empty, the trajectories of the rate vectors are indistinguishable between the cases of unit buffers and infinite buffers. This is because in the infinite buffer case, after a brief upward excursion in the queue lengths at the beginning while the system is overloaded (cf. Figure \ref{fig:queue_inf_buffer}), the maximum queue length is at most one, and so the extra buffer is not utilized. On the other hand, if the system starts with a sufficiently large backlog, it behaves as if it were overloaded until the backlog is handled. In that case, in Figure \ref{fig:inf_buffer_big}, we can see how the rates overshoot their optimal point while the queues remain large, but converge to the optimal rates once the backlog is resolved.
Establishing the convergence of the proposed algorithm formally in the infinite buffer case is an interesting future research direction.

\begin{figure}[h!]
     \centering
     \begin{subfigure}[b]{0.49\textwidth}
         \centering
         \includegraphics[width=\textwidth]{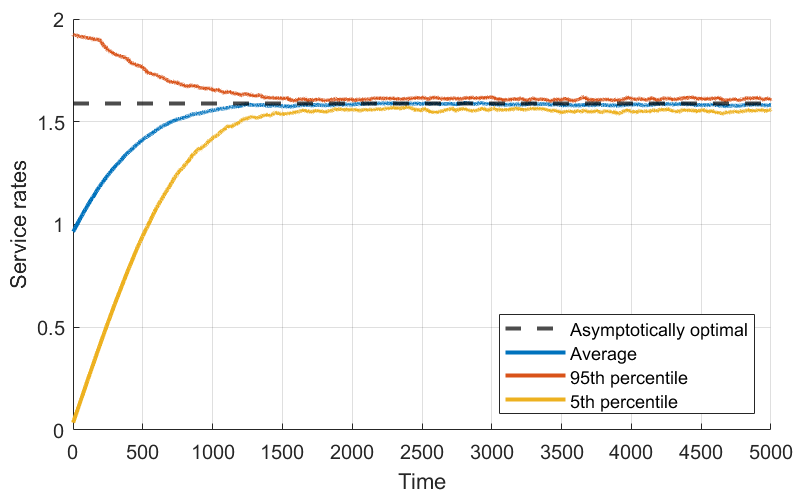}
         \caption{Service rates for the case of unit buffers, starting empty.}
         \label{fig:rate_unit_buffer}
     \end{subfigure}
     \hfill
     \begin{subfigure}[b]{0.49\textwidth}
         \centering
         \includegraphics[width=\textwidth]{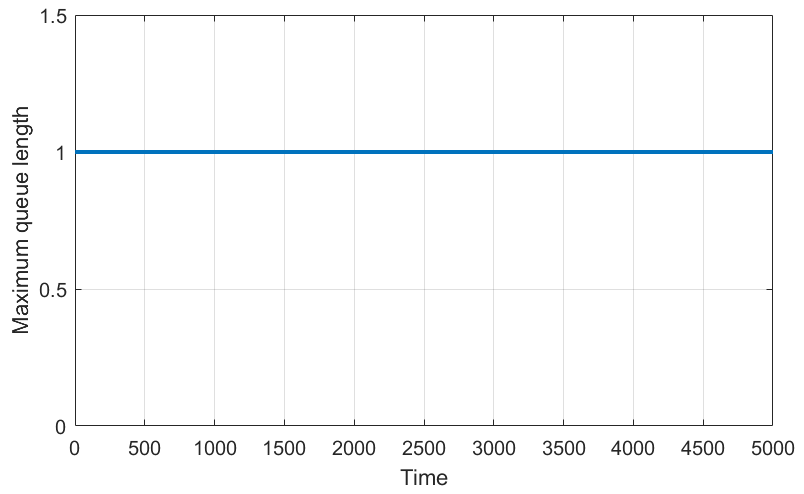}
         \caption{Maximum queue length for the case of unit buffers, starting empty.}
         \label{fig:queue_unit_buffer}
     \end{subfigure}
        \caption{Service rates and maximum queue lengths for the unit buffer case, starting empty.}
        \label{fig:unit_buffer}
\end{figure}

\begin{figure}[h!]
     \centering
     \begin{subfigure}[b]{0.49\textwidth}
         \centering
         \includegraphics[width=\textwidth]{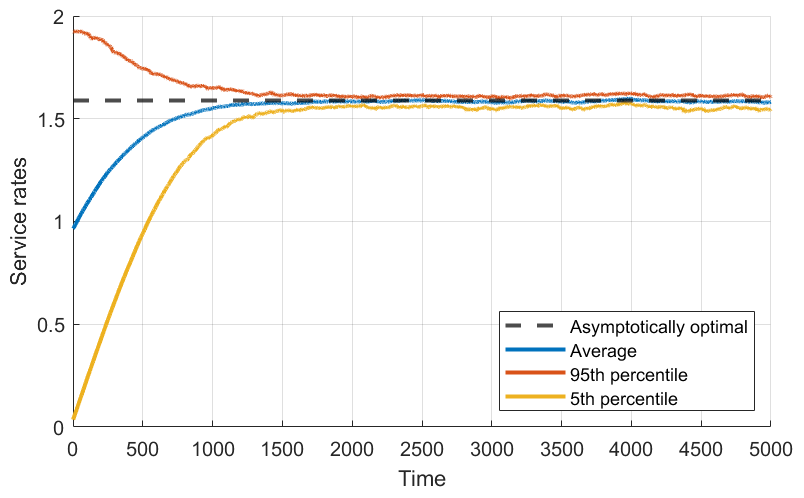}
         \caption{Service rates for the case of infinite buffers, starting empty.}
         \label{fig:rate_inf_buffer}
     \end{subfigure}
     \hfill
     \begin{subfigure}[b]{0.49\textwidth}
         \centering
         \includegraphics[width=\textwidth]{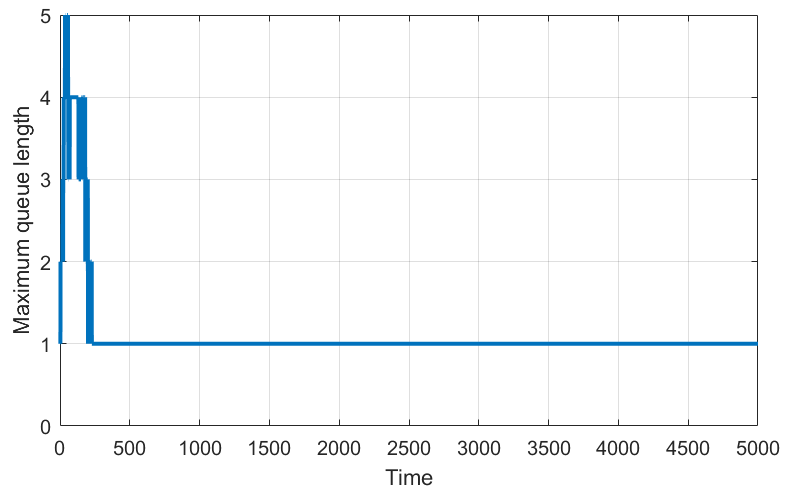}
         \caption{Maximum queue length for the case of infinite buffers, starting empty.}
         \label{fig:queue_inf_buffer}
     \end{subfigure}
        \caption{The service rates and maximum queue lengths for the infinite buffer case, starting empty.}
        \label{fig:inf_buffer}
\end{figure}

\begin{figure}[h!]
     \centering
     \begin{subfigure}[b]{0.49\textwidth}
         \centering
         \includegraphics[width=\textwidth]{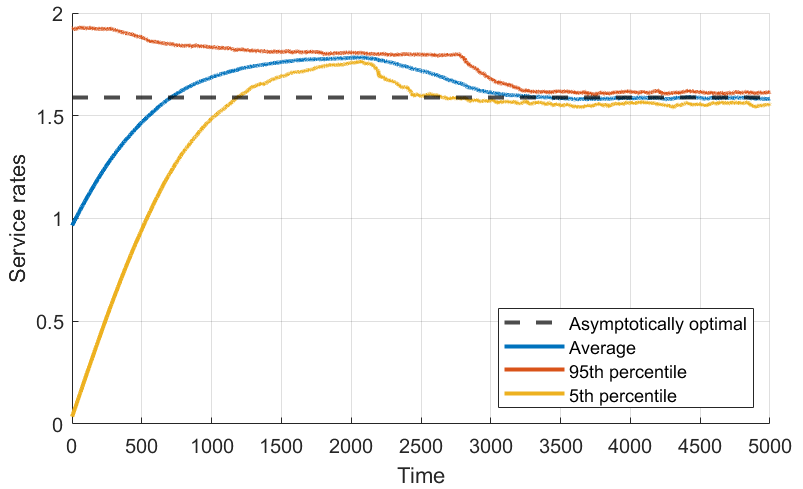}
         \caption{Service rates for the case of infinite buffers, starting with a backlog.}
         \label{fig:rate_inf_buffer_big}
     \end{subfigure}
     \hfill
     \begin{subfigure}[b]{0.49\textwidth}
         \centering
         \includegraphics[width=\textwidth]{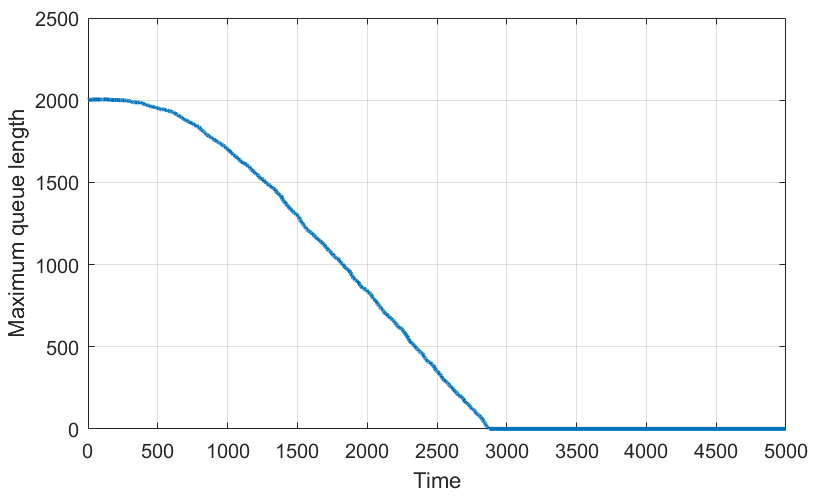}
         \caption{Maximum queue length for the case of infinite buffers, starting with a backlog.}
         \label{fig:queue_inf_buffer_big}
     \end{subfigure}
        \caption{Service rates and maximum queue lengths for the infinite buffer case, starting with a backlog.}
        \label{fig:inf_buffer_big}
\end{figure}

\section{Proofs of main results}\label{sec:proofs}

\subsection{Proof of Theorem \ref{thm:process_mconv}} \label{sec:proof_process_mconv}
The proof introduces a measure-valued representation of the process $\Big(\big(X^{n,m}_v(\cdot),I^{n,m}_v(\cdot)\big):v\in [n]\Big)$, where $X^{n,m}_v(t)\in\{0,1\}$ denotes whether server $v$ is idle or busy at time $t$, and $I^{n,m}_v(t)$ is the current idle time of server $v$ at time $t$. Using this measure-valued representation, the proof then relies on results in \cite{kurtz1992averaging} to establish the tightness of the sequence of these measures, and to argue that the limit points are integrals with respect to certain random measures. Then, in order to characterize these limiting measures we use a martingale decomposition of the pre-limit dynamics on certain test functions, and use Doob's maximal inequality to establish their limits.\\

Let $\nu^{n,m}$ be a random measure on $[0, \infty) \times \{ 0, 1 \}^n \times [0, \infty)^n$ defined as
\begin{equation}
    \nu^{n,m}\left( \Gamma \times \diff \boldsymbol{x} \times \diff \boldsymbol{i} \right)
    := \int\limits_\Gamma \delta_{\boldsymbol{X}^{n,m}(m s)}(\diff \boldsymbol{x}) \delta_{\boldsymbol{I}^{n,m}(m s)}(\diff \boldsymbol{i}) \diff s.
\end{equation}
Note that $\nu^{n,m}$ is the empirical measure of the queue length process $\boldsymbol{X}^{n,m}$ and the idle times $\boldsymbol{I}^{n,m}$.

\begin{lemma}
\label{lem:rel_compact}
The sequence $( \nu^{n,m} )_{m \in \N}$ is relatively compact.
\end{lemma}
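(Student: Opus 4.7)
The plan is to invoke a general relative-compactness criterion for random occupation measures in the spirit of Kurtz~\cite{kurtz1992averaging}. The underlying per-slice state space $\{0,1\}^n\times[0,\infty)^n$ fails to be compact solely because of the idle-time coordinates $I_v^{n,m}\in[0,\infty)$; the busy/idle indicators $X_v^{n,m}\in\{0,1\}$ live in a finite set and so contribute nothing to the compactness issue. The time marginal of $\nu^{n,m}$ is deterministic (Lebesgue measure), so no work is needed there either.

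The first step is to reduce the problem to tightness of the idle-time marginal on each compact time window $[0,T]$. By Fubini and the definition of $\nu^{n,m}$,
\[ \E\!\left[\nu^{n,m}\!\left([0,T]\times\{0,1\}^n\times\big([0,K]^n\big)^c\right)\right] \leq n\int_0^T \sup_{v\in[n]} \P\big(I_v^{n,m}(ms)>K\big)\,\diff s, \]
so it suffices to produce a uniform-in-$s,m,v$ tail (or first moment) bound on $I_v^{n,m}(s)$.

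The core technical step is such a tail bound, which I would obtain via a coupling argument. The key observation is that when server $v$ is idle and $k$ servers are idle in total, the instantaneous rate at which $v$ receives a task equals $\lambda n/k \geq \lambda$, because JIQ selects uniformly at random among the $k$ idle servers and $k\leq n$. Representing the arrivals as a Poisson random measure and thinning appropriately, one can construct on the same probability space an $\mathrm{Exp}(\lambda)$ upper bound $\tau$ for the length $D$ of any idle period of $v$. Since $I_v^{n,m}(s)$ is either the age of the current idle period (bounded pathwise by $D$) or the length of the most recently completed idle period (also bounded by $D$), we conclude $\E[I_v^{n,m}(s)]\leq C$ for a constant $C=C(\lambda)$, uniformly in $v,s,m$. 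Markov's inequality then yields the required tightness of the idle-time marginals.

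With tightness in place, relative compactness of $(\nu^{n,m})_{m\in\N}$ follows from Prohorov's theorem combined with the compactness of $\{0,1\}^n$ and the finiteness of Lebesgue measure on each $[0,T]$, with a standard diagonalisation over $T=1,2,\ldots$ to handle the unbounded time axis. The main obstacle in executing this plan is making the coupling for the idle-time domination fully rigorous in the presence of the time-varying number $k$ of idle servers: a distributional domination is easy, but to use it cleanly inside an empirical measure one should work directly with the PRM representation of the arrival stream and construct the dominating exponential on the same probability space, obtaining a genuine pathwise bound that is insensitive to the (random) service-rate vector $\bmu^{n,m}(\cdot)$ driving the fast dynamics.
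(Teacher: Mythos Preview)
Your proposal is correct and follows essentially the same approach as the paper: both rest on the observation that an idle server receives tasks at instantaneous rate at least $\lambda$ (since $\lambda n/k\geq\lambda$), yielding stochastic domination of each idle period by an $\mathrm{Exp}(\lambda)$ variable, and then conclude via Kurtz's compactness criterion for occupation measures. The paper is slightly more direct---it uses the exponential tail bound $\P(I_v^{n,m}(ms)>K)\leq e^{-\lambda K}$ rather than a moment bound plus Markov, and cites \cite[Lemma~1.3]{kurtz1992averaging} in place of your Prohorov-plus-diagonalisation packaging---but the substance is identical, and the coupling concern you flag at the end is not a real obstacle.
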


\begin{proof}
Fix any $K > 0$. If $I_v^{n,m}(m t) > K$, then server $v \in [n]$ has not received any tasks throughout $[\tau - K, \tau]$, where $\tau \leq m t$ is the last time the server was idle or $\tau = m t$ if the server is currently idle. However, note that any server receives jobs at least at exponential rate $\lambda$. Thus, the probability that $I_v^{n,m}(m t) > K$ is upper bounded by $e^{-\lambda K}$. Therefore,
\begin{equation}
\begin{multlined}
    \E\left[ \nu^{n,m}([0, t] \times \{ 0, 1 \}^n \times [0, K]^n) \right] 
    = \E\left[ \int\limits_0^t \mathbbm{1}_{\big\{ I_v^{n,m}(m s) \leq K \text{ for all } v \in [n] \big\}} \diff s \right] \\
    = \int\limits_0^t \P\big( I_v^{n,m}(m s) \leq K \text{ for all } v \in [n] \big) \diff s
    \geq \int\limits_0^t \left( 1 - \sum_{v \in [n]} \P\left( I_v^{n,m}(m s) > K \right) \right) \diff s \\
    \geq \int\limits_0^t \left( 1 - n e^{-\lambda K} \right) \diff s
    = \left( 1 - n e^{-\lambda K} \right) t,
\end{multlined}
\end{equation}
for all $m \in \N$. It follows that the sequence $( \nu^{n,m} )_{m \in \N}$ is relatively compact by~\cite[Lemma 1.3]{kurtz1992averaging}.
\end{proof}

Since $( \nu^{n,m} )_{m \in \N}$ is relatively compact by Lemma \ref{lem:rel_compact}, then every subsequence has a convergent subsequence. Let $( \nu^{n,m} )_{m \in \N}$ be any such convergent subsequence, where we omit the subscripts of the subsequence for brevity, and let $\nu^{n,m} \to \nu^n$ as $m \to \infty$. Note that
\begin{equation}
\begin{multlined}
    \mu_v^{n,m}(m t) - \mu_v^{n,m}(0) 
    = - \frac{1}{m} \int\limits_0^{m t} \left[ g'(\mu_v^{n,m}(s)) + h'(\mu_v^{n,m}(s)) \left( I_v^{n,m}(s) + \frac{1}{\mu_v^{n,m}(s)} \right) \right] \diff s \\
    = - \int\limits_{[0, t] \times \{0, 1\}^n \times [0, \infty)^n} \left[ g'(\mu_v^{n,m}(m s)) + h'(\mu_v^{n,m}(m s)) \left( i_v + \frac{1}{\mu_v^{n,m}(s)} \right) \right] \nu^{n,m}(\diff s \times \diff \boldsymbol{x} \times \diff \boldsymbol{i}),
\end{multlined}
\end{equation}
and hence, along the convergent subsequence, $\boldsymbol{\mu}^{n,m}(m t) \to \boldsymbol{\mu}^{n}(t)$ weakly as $m \to \infty$ by the continuous mapping theorem, where
\begin{equation}
    \mu_v^{n}(t)
    = \mu_v^{n}(0) - \int\limits_{[0, t] \times \{0, 1\}^n \times [0, \infty)^n} \left[ g'(\mu_v^n(s)) + h'(\mu_v^n(s)) \left( i_v + \frac{1}{\mu_v^n(s)} \right) \right] \nu^{n}(\diff s \times \diff \boldsymbol{x} \times \diff \boldsymbol{i}).
\end{equation}
Moreover,  \cite[Lemma 1.4]{kurtz1992averaging} implies that there exists a family of random probability measures $\{\pi_t:t\geq 0\}$ on $\{ 0, 1 \}^n \times [0, \infty)^n$ such that
\begin{equation}
    \nu^{n}([0, t] \times \diff \boldsymbol{x} \times \diff \boldsymbol{i})
    = \int\limits_0^t \pi_s(\diff \boldsymbol{x} \times \diff \boldsymbol{i}) \diff s.
\end{equation}
To complete the proof of Theorem \ref{thm:process_mconv}, we should therefore show that $\pi_t$ is the stationary distribution of a system where the service rate is fixed at $\bmu^n(t)$. Let $f: \{ 0, 1 \}^n \times [0, \infty)^n \to \R$ be bounded and differentiable in its second coordinate. Then, using the convention $0/0=0$, we have 
\begin{equation}
\begin{multlined}
    f(\boldsymbol{X}^{n,m}(m t), \boldsymbol{I}^{n,m}(m t))
    = f(\boldsymbol{X}^{n,m}(0), \boldsymbol{I}^{n,m}(0)) \\
    + \sum_{v \in [n]} \int\limits_0^{m t} \frac{\partial}{\partial I^{n,m}_v} f(\boldsymbol{X}^{n,m}(s), \boldsymbol{I}^{n,m}(s)) (1 - X^{n,m}_v(s)) \diff s \\
    + \sum_{v \in [n]} \int\limits_0^{m t} ( f(\boldsymbol{X}^{n,m}(s) + e_v, \boldsymbol{I}^{n,m}(s)) \\ - f(\boldsymbol{X}^{n,m}(s), \boldsymbol{I}^{n,m}(s)) ) \diff N_v^{(1)}\left( \int\limits_0^s \frac{\lambda n (1 - X^{n,m}_v(u))}{\sum_{v' \in [n]} (1 - X^{n,m}_{v'}(u))} \diff u \right) \\
    + \sum_{v \in [n]} \int\limits_0^{m t} ( f(\boldsymbol{X}^{n,m}(s) - e_v, \boldsymbol{I}^{n,m}(s) - I^{n,m}_v(s) e_v) \\ - f(\boldsymbol{X}^{n,m}(s), \boldsymbol{I}^{n,m}(s) ) \diff N_v^{(2)}\left( \int\limits_0^s \mu_v^{n,m}(u) X^{n,m}_v(u) \diff u \right),
\end{multlined}
\end{equation}
where $N_v^{(1)}$ and $N_v^{(2)}$ are independent unit-rate Poisson processes and $e_v$ is the $n$-dimensional unit vector with a one at index $v$ and zeroes everywhere else, for $v \in [n]$. We divide by $m$ and rewrite the third and fourth terms on the right-hand to obtain
\begin{equation}
\begin{gathered}
\label{eq:process_mconv_1}
    \frac{f(\boldsymbol{X}^{n,m}(m t), \boldsymbol{I}^{n,m}(m t))}{m}
    = \frac{f(\boldsymbol{X}^{n,m}(0), \boldsymbol{I}^{n,m}(0))}{m} \\
    + \sum_{v \in [n]} \int\limits_0^t \frac{\partial}{\partial I^{n,m}_v} f(\boldsymbol{X}^{n,m}(m s), \boldsymbol{I}^{n,m}(m s)) (1 - X^{n,m}_v(m s)) \diff s \\
    + \sum_{v \in [n]} \int\limits_0^t ( f(\boldsymbol{X}^{n,m}(m s) + e_v, \boldsymbol{I}^{n,m}(m s)) \\ - f(\boldsymbol{X}^{n,m}(m s), \boldsymbol{I}^{n,m}(m s)) ) \frac{\lambda n (1 - X^{n,m}_v(m s))}{\sum_{v' \in [n]} (1 - X^{n,m}_{v'}(m s))} \diff s \\
    + \sum_{v \in [n]} \int\limits_0^t ( f(\boldsymbol{X}^{n,m}(m s) - e_v, \boldsymbol{I}^{n,m}(m s) - I^{n,m}_v(m s) e_v) \\ - f(\boldsymbol{X}^{n,m}(m s), \boldsymbol{I}^{n,m}(m s)) ) \mu_v^{n,m}(m s) X^{n,m}_v(m s) \diff s \\
    + \frac{M^{n,m}_{(1)}(m t)}{m} + \frac{M^{n,m}_{(2)}(m t)}{m},
\end{gathered}
\end{equation}
where
\begin{equation}
\begin{aligned}
    M^{n,m}_{(1)}(t) &:= \sum_{v \in [n]} \int\limits_0^t ( f(\boldsymbol{X}^{n,m}(s) + e_v, \boldsymbol{I}^{n,m}(s)) \\ &\hspace{3cm} - f(\boldsymbol{X}^{n,m}(s), \boldsymbol{I}^{n,m}(s)) ) \diff M_v^{(1)}\left( \int\limits_0^s \frac{\lambda n (1 - X^{n,m}_v(u))}{\sum_{v' \in [n]} (1 - X^{n,m}_{v'}(u))} \diff u \right), \\
    M^{n,m}_{(2)}(t) &:= \sum_{v \in [n]} \int\limits_0^t ( f(\boldsymbol{X}^{n,m}(s) - e_v, \boldsymbol{I}^{n,m}(s) - I^{n,m}_v(s) e_v) \\ &\hspace{3cm} - f(\boldsymbol{X}^{n,m}(s), \boldsymbol{I}^{n,m}(s)) ) \diff M_v^{(2)}\left( \int\limits_0^s \mu_v^{n,m}(s) X^{n,m}_v(s) \diff u \right),
\end{aligned}
\end{equation}
and $M_v^{(i)}(t) := N_v^{(i)}(t) - t$ for $i = 1, 2$ and $v \in [n]$. Let $\mathcal{F}_t$ be the natural filtration of $(\boldsymbol{X}^{n,m}(t), \boldsymbol{I}^{n,m}(t))$. Then, it is easily checked that $M^{n,m}_{(1)}(t)$ and $M^{n,m}_{(2)}(t)$ are square-integrable martingales with respect to $\mathcal{F}_t$. Moreover,
\begin{equation}
\begin{aligned}
    \E\left[ \left\langle M^{n,m}_{(1)}, M^{n,m}_{(1)} \right\rangle(m t) \right] 
    &\leq 2 \lVert f \rVert_\infty \sum_{v \in [n]} \E\left[ \left\langle N_v^{(1)}, N_v^{(1)} \right\rangle \left( \int\limits_0^{m t} \frac{\lambda n (1 - X^{n,m}_v(u))}{\sum_{v' \in [n]} (1 - X^{n,m}_{v'}(u))} \diff s \right) \right] 
   \\ &= 2 \lVert f \rVert_\infty \sum_{v \in [n]} \E\left[ \int\limits_0^{m t} \frac{\lambda n (1 - X^{n,m}_v(u))}{\sum_{v' \in [n]} (1 - X^{n,m}_{v'}(u))} \diff s \right]
    = 2 \lVert f \rVert_\infty \lambda n m t,
\end{aligned}
\end{equation}
and
\begin{equation}
\begin{aligned}
    \E\left[ \left\langle M^{n,m}_{(2)}, M^{n,m}_{(2)} \right\rangle(m t) \right] 
    &\leq 2 \lVert f \rVert_\infty \sum_{v \in [n]} \E\left[ \left\langle N_v^{(2)}, N_v^{(2)} \right\rangle \left( \int\limits_0^{m t} \mu_v(s) X^{n,m}_v(s) \diff s \right) \right] \\
    &= 2 \lVert f \rVert_\infty \sum_{v \in [n]} \E\left[ \int\limits_0^{m t} \mu_v^{n,m}(s) X^{n,m}_v(s) \diff s \right]
    \leq 2 \lVert f \rVert_\infty \mu_+ n m t.
\end{aligned}
\end{equation}
Therefore, by Doob's maximal inequality, $M^{n,m}_{(1)}(t) / m \to 0$ and $M^{n,m}_{(2)}(t) / m \to 0$, as $m \to \infty$ uniformly on any bounded time interval. Also, $f(\boldsymbol{X}^{n,m}(m t), \boldsymbol{I}^{n,m}(m t)) / m \to 0$ and $f(\boldsymbol{X}^{n,m}(0), \boldsymbol{I}^{n,m}(0)) / m \to 0$, as $m \to \infty$ uniformly on any bounded time interval. Combining these facts with Equation \eqref{eq:process_mconv_1}, we get
\begin{equation}
\begin{multlined}
    \int\limits_{[0, t] \times \{ 0, 1 \}^n \times [0, \infty)^n} \sum_{v \in [n]} \Bigg( \frac{\partial}{\partial i_v} f(\boldsymbol{x}, \boldsymbol{i}) (1 - x_v)
    + \left( f(\boldsymbol{x} + e_v, \boldsymbol{i}) - f(\boldsymbol{x}, \boldsymbol{i}) \right) \frac{\lambda n (1 - x_v)}{\sum_{v' \in [n]} (1 - x_{v'})} \\
    + \left( f(\boldsymbol{x} - e_v, \boldsymbol{i} - i_v e_v) - f(\boldsymbol{x}, \boldsymbol{i}) \right) \mu_v^{n,m}(m s) x_v \Bigg) \nu^{n,m}(\diff s \times \diff \boldsymbol{x} \times \diff \boldsymbol{i})
    \to 0,
\end{multlined}
\end{equation}
as $m \to \infty$ uniformly on any bounded time interval, and in particular along the convergent subsequence. As a result, by the continuous mapping theorem, we have
\begin{equation}
\begin{aligned}
    \int\limits_{\{ 0, 1 \}^n \times [0, \infty)^n} \sum_{v \in [n]} \Bigg( \frac{\partial}{\partial i_v} f(\boldsymbol{x}, \boldsymbol{i}) (1 - x_v)
    &+ \left( f(\boldsymbol{x} + e_v, \boldsymbol{i}) - f(\boldsymbol{x}, \boldsymbol{i}) \right) \frac{\lambda n (1 - x_v)}{\sum_{v' \in [n]} (1 - x_{v'})} \\
    &+ \left( f(\boldsymbol{x} - e_v, \boldsymbol{i} - i_v e_v) - f(\boldsymbol{x}, \boldsymbol{i}) \right) \mu_v^n(t) x_v \Bigg) \pi_t(\diff \boldsymbol{x} \times \diff \boldsymbol{i})
    = 0.
\end{aligned}
\end{equation}
Thus, \cite[Proposition 9.2]{kurtz1986markov} implies that $\pi_t$ is the stationary distribution of a system where the service rate is fixed at $\boldsymbol{\mu}^n(t)$. Note that the stationary distribution is unique since the process is irreducible. This concludes the proof of Theorem \ref{thm:process_mconv}.

\subsection{Proof of Theorem \ref{thm:idle}} \label{sec:proof_subcrit_idle}

As $n$ is fixed throughout, we will omit the dependence on $n$ in the notation. A server $v \in [n]$ thus processes tasks at a fixed rate $\mu_v$. Let 
\begin{equation}\label{eq:delta}
    \delta:= \frac{1}{n} \sum_{v \in [n]} \mu_v - \lambda
\end{equation}
be the amount of excess processing power (per-server) in the system. We prove the theorem for two cases separately: for the case where the system is in a supercritical regime (i.e., when $\delta \leq 0$), and for the case where it is in a subcritical regime (i.e., when $\delta > 0$). In both cases, we first show that the fraction of busy servers concentrate around a constant (as $n\to\infty$), and then argue that this implies that the expected idle times become all equal (as $n\to\infty$).

\subsubsection{Supercritical regime}
Throughout this subsection we consider the case where $\delta\leq 0$ in Equation~\eqref{eq:delta},
that is, where the system is either critically loaded, or overloaded. In this case, we first show that the fraction of busy servers concentrate around $1$ (i.e., around the state where all servers are busy), as $n\to\infty$.
Recall $\mu_-$ from Assumption~\ref{ass:opt}.
\begin{lemma}
\label{thm:supercrit_bound}
Let $\varepsilon$ be such that $0< \varepsilon \leq 1 / 3$. Then,
\begin{equation}
    \P\left( \frac{1}{n} \sum_{v \in [n]} X_v(\infty) \leq 1 - 3 \varepsilon \right)
    \leq \left( 1 - \frac{\mu_- \varepsilon}{\lambda} \right)^{\varepsilon n}.
\end{equation}
\end{lemma}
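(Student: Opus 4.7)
The plan is to analyze the stationary distribution of the number of idle servers $U := n - \sum_{v \in [n]} X_v(\infty)$, which turns the claim into the one-dimensional tail bound $\P(U \geq 3\varepsilon n) \leq (1 - \mu_-\varepsilon/\lambda)^{\varepsilon n}$. The key tools are the level-crossing balance for $U$ in steady state, together with the supercritical hypothesis $\bar{\mu} := \frac{1}{n} \sum_{v \in [n]} \mu_v \leq \lambda$ (which is precisely $\delta \leq 0$).

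First, I would equate the up- and down-crossing rates of $U$ at each level $k \geq 0$. The down-crossing rate from $k+1$ to $k$ is $\lambda n \, \pi(U = k+1)$, since an arrival fills an idle server whenever $U \geq 1$, while the up-crossing rate from $k$ is $\E\bigl[\sum_{v : X_v = 1} \mu_v \cdot \mathbbm{1}[U = k]\bigr]$. Using
\[
    \sum_{v : X_v = 1} \mu_v = n\bar\mu - \sum_{v : X_v = 0} \mu_v \leq n\bar\mu - U \mu_- \leq \lambda n - U \mu_-,
\]
where the last step invokes supercriticality, this yields the pointwise ratio bound $\pi(U = k+1) \leq \bigl(1 - k \mu_-/(\lambda n)\bigr) \pi(U = k)$ for all $k \geq 0$. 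Summing this over $k \geq m$ and using the elementary bound $\sum_{k \geq m} k \, \pi(U = k) \geq m \, \pi(U \geq m)$ then upgrades it to the tail-ratio bound
\[
    \pi(U \geq m+1) \leq \left(1 - \frac{m \mu_-}{\lambda n}\right) \pi(U \geq m),
\]
which, iterated from $m = 0$, gives $\pi(U \geq m) \leq \prod_{j=0}^{m-1}\bigl(1 - j\mu_-/(\lambda n)\bigr)$.

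Setting $m = 3\varepsilon n$ and retaining only the $2\varepsilon n$ factors with $j \in [\varepsilon n, 3\varepsilon n)$---each of which is at most $1 - \varepsilon \mu_-/\lambda$---yields $\P(U \geq 3\varepsilon n) \leq (1 - \varepsilon \mu_-/\lambda)^{2\varepsilon n} \leq (1 - \varepsilon \mu_-/\lambda)^{\varepsilon n}$, which is the claim. I expect the only mildly delicate step to be upgrading the pointwise ratio (which follows from the level-crossing identity only in expectation) to the tail ratio; the trick is precisely the summation together with the moment lower bound $\sum_{k \geq m} k \, \pi(U = k) \geq m \, \pi(U \geq m)$. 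The hypothesis $\varepsilon \leq 1/3$ enters only to guarantee $3\varepsilon n \leq n$, keeping all factors non-negative and the range $[\varepsilon n, 3\varepsilon n)$ non-degenerate; no coupling, Lyapunov, or Chernoff-style machinery appears necessary.
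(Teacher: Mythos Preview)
Your argument is correct and takes a genuinely different route from the paper's. The paper constructs an auxiliary birth--death process $Y(t)$ on $\{0,\dots,k\}$ with $k=\lfloor(1-2\varepsilon)n\rfloor$, with constant birth rate $\lambda n$ and constant death rate $\sum_v \mu_v - 2\mu_-\varepsilon n$, verifies by coupling that $Y(t)$ stochastically lower-bounds $\sum_v X_v(t)$, and then reads off the geometric tail from the explicit stationary law of $Y$. You instead work directly with the stationary balance of the original process: the level-crossing identity for the idle count $U$, combined with the pointwise bound $\sum_{v:X_v=1}\mu_v \leq \lambda n - U\mu_-$ on $\{U=k\}$, yields a recursive ratio inequality on the tail $\pi(U\geq m)$ that telescopes to the same geometric decay. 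Your approach is more elementary in that it bypasses the coupling construction entirely, and it in fact produces the sharper exponent $2\varepsilon n$ before you discard the factor of two; the paper's route has the advantage that, once domination is established, the tail bound reduces to a one-line computation on a textbook birth--death chain. One minor remark: your parenthetical worry about the pointwise ratio holding ``only in expectation'' is unnecessary, since the inequality $\sum_{v:X_v=1}\mu_v \leq \lambda n - k\mu_-$ holds \emph{pointwise} on $\{U=k\}$, so $\pi(U=k+1)\leq(1-k\mu_-/(\lambda n))\,\pi(U=k)$ follows immediately from the balance identity without any additional averaging step.
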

This results follows by lower bounding the fraction of busy servers by an appropriate birth-death process. The proof is given in Appendix \ref{sec:supercrit_bound}.\\

Leveraging this concentration result, we obtain an upper bound on the expected idle times.

\begin{lemma}
\label{thm:supercrit_idle}
Let $\varepsilon$ be such that $0< \varepsilon \leq 1 / 3$. Then,
\begin{equation}\label{eq:aux}
    \E\left[ I_v(\infty) \right]
    \leq \frac{6 \varepsilon}{\lambda}
    + \left( \frac{\sqrt{2}}{\lambda} + \frac{1}{\mu_-} \right) \left( 1 - \frac{\mu_- \varepsilon}{\lambda} \right)^{\varepsilon n / 2}.
\end{equation}
\end{lemma}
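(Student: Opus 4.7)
The plan is to combine the concentration bound of Lemma~\ref{thm:supercrit_bound} with a Cauchy--Schwarz decomposition. Define the good event
$$A := \Big\{\tfrac{1}{n}\sum_{v' \in [n]} X_{v'}(\infty) > 1 - 3\varepsilon\Big\},$$
on which at least $(1-3\varepsilon)n$ servers are busy, so any idle server receives tasks at rate at least $\lambda n / (3\varepsilon n) = \lambda/(3\varepsilon)$. Lemma~\ref{thm:supercrit_bound} gives $\P(A^c) \leq (1-\mu_-\varepsilon/\lambda)^{\varepsilon n}$. I would then split
$$\E[I_v(\infty)] = \E[I_v(\infty)\mathbbm{1}_A] + \E[I_v(\infty)\mathbbm{1}_{A^c}].$$

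For the second summand I would apply Cauchy--Schwarz:
$$\E[I_v(\infty)\mathbbm{1}_{A^c}] \leq \sqrt{\E[I_v(\infty)^2]}\cdot\sqrt{\P(A^c)} \leq \sqrt{\E[I_v(\infty)^2]}\cdot\bigl(1-\tfrac{\mu_-\varepsilon}{\lambda}\bigr)^{\varepsilon n/2}.$$
The bound $\sqrt{\E[I_v(\infty)^2]} \leq \sqrt{2}/\lambda + 1/\mu_-$ should follow from the representation of $I_v(\infty)$ as either the age of the current idle period (if $v$ is idle) or the length of its last completed idle period (if $v$ is busy), combined with two elementary facts: every idle period of $v$ is stochastically dominated by $\mathrm{Exp}(\lambda)$, since $\lambda n/N_{\mathrm{idle}}(t) \geq \lambda$ throughout idleness, and every busy period is $\mathrm{Exp}(\mu_v)$ with $\mu_v \geq \mu_-$ and is independent of the preceding idle period. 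A second-moment computation on the corresponding alternating renewal representation then yields the stated $L^2$ bound.

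For the first summand, the goal is $\E[I_v(\infty)\mathbbm{1}_A] \leq 6\varepsilon/\lambda$. The heuristic is that on $A$ the instantaneous arrival rate to an idle server is at least $\lambda/(3\varepsilon)$, so the relevant idle periods have length of order $3\varepsilon/\lambda$, and $I_v(\infty)$ (either a current age or a previous idle-period length) is of that order; the factor $6$ arises from combining the age contribution and the length-biased contribution from the busy case.

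The main obstacle is precisely this last step, because $I_v(\infty)$ depends on the past trajectory of server $v$, whereas $A$ is a snapshot event at time $0$. The cleanest resolution is to condition on the epoch of $v$'s last service completion and exploit the Markov property together with a PASTA-type argument: this allows one to substitute the current-time rate lower bound $\lambda/(3\varepsilon)$ for the historical arrival rate seen by $v$, up to a residual term that is absorbed into the Cauchy--Schwarz contribution. Putting both pieces together produces the bound displayed in Equation~\eqref{eq:aux}.
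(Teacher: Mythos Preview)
Your decomposition and the Cauchy--Schwarz treatment of the bad event are fine, but the argument for the good event has a real gap. You acknowledge the obstacle yourself: $A$ is a condition on the occupancy \emph{now}, whereas $I_v(\infty)$ records the length (or age) of an idle period that took place \emph{in the past}. Conditioning on the last service-completion epoch and invoking a ``PASTA-type argument'' does not bridge this: the epoch $\sigma$ of the last service completion is not a Poisson arrival time, and nothing you have written shows that the system was in $A$ throughout $[\sigma,\sigma+I_v]$. Without that, the rate lower bound $\lambda/(3\varepsilon)$ cannot be applied to the idle period that generated $I_v(\infty)$.

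The paper sidesteps this temporal mismatch by first splitting $I_v = (1-X_v)I_v + X_v I_v$ and then using steady-state identities obtained from $\frac{d}{dt}\E[f(\boldsymbol X(t),\boldsymbol I(t))]=0$ (Lemma~\ref{lem:lyapunov_eq}). In particular,
\[
\E\!\left[\frac{\lambda n(1-X_v)I_v}{\sum_{v'}(1-X_{v'})}\right]=\E[1-X_v]
\qquad\text{and}\qquad
\E[\mu_v X_v I_v]=\E[1-X_v].
\]
These identities are exactly what couples the past idle-time $I_v$ to the present state $\boldsymbol X$. With them in hand one writes $(1-X_v)I_v = \frac{1}{\lambda n}\sum_{v'}(1-X_{v'})\cdot\frac{\lambda n(1-X_v)I_v}{\sum_{v'}(1-X_{v'})}$, bounds the prefactor by $3\varepsilon/\lambda$ on $A$, and the identity converts the remaining expectation to $\E[1-X_v]\le 1$; the $X_v I_v$ piece is handled analogously via the second identity. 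This is the missing ingredient in your proposal.

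A smaller point: your second-moment bound is more elaborate than needed. Since $I_v(\infty)>K$ forces an interval of length $K$ during which $v$ was idle and received no task while the arrival rate to it was at least $\lambda$, one has $\P(I_v(\infty)>K)\le e^{-\lambda K}$ directly, so $\E[I_v(\infty)^2]\le 2/\lambda^2$ without any reference to busy periods or alternating renewals. The extra $1/\mu_-$ in the stated bound comes not from the $L^2$ estimate but from the $E^c$-contribution of the $X_v I_v$ term in the paper's decomposition.
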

We establish Lemma~\ref{thm:supercrit_idle} by expressing the expected idle times as a function of the fraction of busy servers, and using the concentration result of Lemma \ref{thm:supercrit_bound} to bound it. The proof is given in Appendix \ref{sec:supercrit_idle}.\\

Taking the limit as $n\to\infty$ in Equation \eqref{eq:aux} yields
    $\lim\limits_{n\to\infty} \E\left[ I_v(\infty) \right] \leq 6 \varepsilon/\lambda.$
Since this holds for all $\varepsilon>0$ small enough, we have
$
    \lim\limits_{n\to\infty} \E\left[ I_v(\infty) \right] =0,
$
which concludes the proof of Theorem \ref{thm:idle} for the supercritical case.

\subsubsection{Subcritical regime}
Throughout this subsection we consider the case where $\delta> 0$ in Equation~\eqref{eq:delta},
that is, where the system is in a subcritical regime. In this case, we also need to show that the fraction of busy servers concentrate around a constant, as $n\to\infty$. However, here the constant is bounded away from zero, and thus the concentration result is significantly more involved.
In order to prove our desired concentration result, we define an auxiliary measure-valued process. Let
\[ \Phi(\diff x) := \frac{1}{n} \sum_{v \in [n]} \delta_{\mu_v}(\diff x) \]
and let $\bar{\phi}_t$ be a measure-valued process on $\R_+$ such that
\begin{equation}\label{eq:phi-t}
    \int\limits_0^\infty f(x) \bar{\phi}_t(\diff x)
    = \int\limits_0^\infty f(x) \bar{\phi}_0(\diff x) 
    + \int\limits_0^t \left( \frac{\lambda \left( \int_0^\infty f(x) \Phi(\diff x) - \int_0^\infty f(x) \bar{\phi}_s(\diff x) \right)}{1 - \int_0^\infty \bar{\phi}_s(\diff x)} - \int\limits_0^\infty x f(x) \bar{\phi}_s(\diff x) \right) \diff s,
\end{equation}
for all $f: \R_+ \to [0, 1]$, with
\[ \bar{\phi}_0(\diff x) = \frac{1}{n} \sum_{v \in [n]} \delta_{\mu_v} (\diff x) X_v(0). \]

\begin{remark}\normalfont
It can be easily checked that, for all $t\geq 0$, the measure $\bar{\phi}_t$ is a collection of $n$ point masses at $\mu_v$, for $v\in[n]$. Hence, its existence and uniqueness follow by standard arguments using sample path constructions.
\end{remark}

We first show that, when $n$ is large, the trajectory of the state of the system is ``close" (in a strong sense) to the auxiliary measure-valued process.

\begin{lemma}
\label{thm:subcrit_transient_limit}
Suppose that
$\int_0^\infty \bar{\phi}_0(\diff x) \leq 1 - \frac{\delta}{\mu_+}.$
Then, for all $T \geq 0$, we have
\begin{equation}\label{eq:strong_bound}
\begin{multlined}
    \sup_{f: \R_+ \to [0, 1]} \E\left[ \sup_{t \in [0, T]} \left\lvert \frac{1}{n} \sum_{v \in [n]} f(\mu_v) X_v(t) - \int_0^\infty f(x) \bar{\phi}_t(\diff x) \right\rvert \right]  \\
    \leq \sqrt{\frac{8 (\mu_+ + \lambda) T}{n}} \exp\left( \left( \frac{6 \lambda \mu_+}{\delta} + \mu_+ \right) T \right).
\end{multlined}
\end{equation}
\end{lemma}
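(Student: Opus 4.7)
The plan is a fluid-limit comparison: derive a martingale decomposition for the empirical measure $\bar\phi_t^n(f) := \tfrac{1}{n}\sum_{v\in[n]} f(\mu_v) X_v(t)$, subtract the ODE \eqref{eq:phi-t}, and close the estimate via Doob's $L^2$ inequality and Gronwall. For any Borel $f:\R_+\to[0,1]$, Dynkin's formula applied to the generator of $(X_v(t))_v$ (arrivals at $v$ at rate $\lambda n(1-X_v)/\sum_{v'}(1-X_{v'})$, completions at $v$ at rate $\mu_v X_v$) yields
\begin{equation*}
\bar\phi_t^n(f) = \bar\phi_0^n(f) + \int_0^t\!\left[\frac{\lambda\bigl(\Phi(f)-\bar\phi_s^n(f)\bigr)}{1-\bar\phi_s^n(\mathbf{1})} - \int_0^\infty\!\! x f(x)\,\bar\phi_s^n(\diff x)\right]\diff s + M_t^n(f),
\end{equation*}
where $M^n(f)$ is a square-integrable martingale with $\langle M^n(f)\rangle_t \leq (\lambda + \mu_+)t/n$ uniformly in $f\leq 1$. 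Since $\bar\phi_0^n=\bar\phi_0$, subtracting \eqref{eq:phi-t} leaves only the drift difference plus $M^n(f)$.

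Next I would establish the a priori bound $\bar\phi_t(\mathbf{1}) \leq 1-\delta/\mu_+$ on $[0,T]$. Writing $\bar\phi_t = \tfrac{1}{n}\sum_v p_v(t)\,\delta_{\mu_v}$, the per-atom ODE is $\dot p_v = \lambda(1-p_v)/(1-\bar\phi_t(\mathbf{1})) - \mu_v p_v$; at the boundary $\bar\phi_t(\mathbf{1}) = 1 - \delta/\mu_+$, using $\tfrac{1}{n}\sum_v \mu_v = \lambda + \delta \leq \mu_+$, a direct computation shows that the vector field points inward, so the set is forward-invariant and $1-\bar\phi_s(\mathbf{1}) \geq \delta/\mu_+$ throughout $[0,T]$.

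Setting $D_t^n(f) := \bar\phi_t^n(f) - \bar\phi_t(f)$, the drift difference decomposes as
\begin{equation*}
-\frac{\lambda\, D_s^n(f)}{1-\bar\phi_s^n(\mathbf{1})} + \frac{\lambda\bigl(\Phi(f)-\bar\phi_s(f)\bigr)\, D_s^n(\mathbf{1})}{\bigl(1-\bar\phi_s^n(\mathbf{1})\bigr)\bigl(1-\bar\phi_s(\mathbf{1})\bigr)} - \int_0^\infty x f(x)\,\bigl(\bar\phi_s^n - \bar\phi_s\bigr)(\diff x).
\end{equation*}
Using $|\Phi(f)-\bar\phi_s(f)| \leq 1-\bar\phi_s(\mathbf{1})$ and noting that $xf(x)/\mu_+ \in [0,1]$ on the support (so the last integral equals $\mu_+$ times $D_s^n$ of a $[0,1]$-valued function), each piece is dominated by $\sup_{g\leq 1}|D_s^n(g)|$ times a constant involving $1/(1-\bar\phi_s^n(\mathbf{1}))$ and $\mu_+$. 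To prevent the pre-limit denominator from blowing up, I localize with the stopping time $\tau := \inf\{t\geq 0 : |D_t^n(\mathbf{1})|>\delta/(2\mu_+)\}\wedge T$, on which $1-\bar\phi_s^n(\mathbf{1})\geq \delta/(2\mu_+)$, so the aggregated Lipschitz constant is controlled by $6\lambda\mu_+/\delta + \mu_+$.

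Doob's $L^2$ inequality gives $\E[\sup_{t\leq T}|M_t^n(f)|] \leq 2\sqrt{(\lambda+\mu_+)T/n}$ uniformly in $f\leq 1$. Setting $\Psi(s) := \sup_{f\leq 1}\E\sup_{u\leq s}|D_u^n(f)|$, taking $\sup_{f\leq 1}\E\sup_{t\leq\cdot}$ of the differenced integral equation and applying Gronwall on $[0,T\wedge\tau]$ yields $\Psi(T\wedge\tau) \leq 2\sqrt{(\lambda+\mu_+)T/n}\,\exp\bigl((6\lambda\mu_+/\delta + \mu_+)T\bigr)$. The extension from $T\wedge\tau$ to $T$ uses Markov's inequality on $\P(\tau<T)$ together with the trivial bound $|D_t^n(f)|\leq 1$ on $\{\tau<T\}$, and absorbs the extra factor $\sqrt{2}$ into the $\sqrt{8}$ on the right-hand side of \eqref{eq:strong_bound}. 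The main obstacle is precisely this stopping-time localization: unlike the supercritical case addressed via Lemma~\ref{thm:supercrit_bound}, here the pre-limit denominator $1-\bar\phi_s^n(\mathbf{1})$ is not a priori bounded away from zero, and without the localization Gronwall would produce an $e^{\Theta(n)T}$ blow-up rather than the desired $\delta$-dependent constant.
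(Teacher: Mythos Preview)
Your approach tracks the paper's proof almost exactly: martingale decomposition, the a~priori invariance $\bar\phi_t(\mathbf{1})\le 1-\delta/\mu_+$, a Lipschitz estimate on the drift difference, Doob's $L^2$ inequality, and Gronwall. The one substantive difference is how you control the pre-limit denominator $1-\bar\phi_s^n(\mathbf{1})$. You localize with the stopping time $\tau=\inf\{t:|D_t^n(\mathbf{1})|>\delta/(2\mu_+)\}$ and then extend from $T\wedge\tau$ to $T$ via Markov's inequality and the trivial bound $|D^n_t(f)|\le 1$. This is where the quantitative claim breaks: on $\{\tau<T\}$ you pick up $\P(\tau<T)\le \tfrac{2\mu_+}{\delta}\,\E\sup_{t\le T\wedge\tau}|D_t^n(\mathbf{1})|$, so the final bound carries an extra prefactor of order $1+2\mu_+/\delta$, not $\sqrt{2}$. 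Since $\delta$ can be arbitrarily small, this does \emph{not} absorb into the $\sqrt{8}$ of \eqref{eq:strong_bound}; you would prove a weaker version of the lemma with a $\delta$-dependent constant in front (which, to be fair, would still suffice for Lemma~\ref{thm:subcrit_concentrate}).

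The paper avoids the stopping time entirely by folding the bad-denominator event into the integrand: it bounds the quotient difference by $\tfrac{2\mu_+}{\delta}(d_{\mathbf{1}}(s)+d_f(s))$ when $\tfrac{1}{n}\sum_{v}(1-X_v(s))\ge \delta/(2\mu_+)$, and by $1$ otherwise; then it observes that the latter event forces $d_{\mathbf{1}}(s)>\delta/(2\mu_+)$, so the indicator is itself dominated by $\tfrac{2\mu_+}{\delta}d_{\mathbf{1}}(s)$. This puts all three contributions under a single Gronwall constant $6\lambda\mu_+/\delta+\mu_+$ on the \emph{full} interval $[0,T]$, and the $\sqrt{8}$ then arises purely from adding the two martingale bounds $\sqrt{4\mu_+T/n}+\sqrt{4\lambda T/n}$. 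If you want the constant exactly as stated, replace your stopping-time localization by this indicator bound.
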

The proof of Lemma~\ref{thm:subcrit_transient_limit} consists of considering a martingale decomposition of the state process, 
showing that the drift is close to our auxiliary process, and using Doob's maximal inequality to bound the corresponding martingale. The proof is given in Appendix \ref{sec:subcrit_transient_limit}.\\

The next step in the proof of Theorem \ref{thm:idle} for the subcritical case is to use the auxiliary process to obtain a concentration abound on the fraction of idle servers. In order to do this, we first establish the following monotonicity result.

\begin{lemma}
\label{lem:stoch_dom}
Let $\boldsymbol{X}^{(1)}(t)$ and $\boldsymbol{X}^{(2)}(t)$ be two copies of the queue length process such that stochastically $\boldsymbol{X}^{(1)}(0) \leq \boldsymbol{X}^{(2)}(0)$, where inequality is considered coordinatewise. Then, there exists a joint probability space such that $\boldsymbol{X}^{(1)}(t) \leq \boldsymbol{X}^{(2)}(t)$ for all $t \geq 0$, almost surely.
\end{lemma}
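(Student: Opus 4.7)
The plan is to prove the claim by explicitly constructing a monotone coupling on a single probability space driven by shared Poisson clocks. Since the state space is finite (each coordinate lives in $\{0,1\}$) and the transition rates are uniformly bounded, the process is well-defined as a piecewise-constant jump process, so it suffices to describe the jumps in a coordinated way and verify that the coordinatewise ordering $\boldsymbol{X}^{(1)}(t)\leq\boldsymbol{X}^{(2)}(t)$ is preserved at every event. To initialize, I would invoke Strassen's theorem to obtain a joint distribution of $\boldsymbol{X}^{(1)}(0)$ and $\boldsymbol{X}^{(2)}(0)$ under which the ordering $\boldsymbol{X}^{(1)}(0)\leq\boldsymbol{X}^{(2)}(0)$ holds almost surely.

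For the dynamics, the coupling I would use is as follows. To each server $v\in[n]$ attach an independent Poisson clock of rate $\mu_v$; when this clock rings, a service completion occurs in system $i\in\{1,2\}$ if and only if $X_v^{(i)}=1$, in which case $X_v^{(i)}$ is reset to $0$. For arrivals, use a single Poisson clock of rate $\lambda n$ and, at each arrival epoch, draw an independent uniformly random permutation $\pi$ of $[n]$; in each system $i$, route the arrival to the first server in the order induced by $\pi$ that is currently idle in system $i$, or drop it if no such server exists. Because the first idle server in a uniformly random permutation is conditionally uniform on the set of idle servers, this construction has the correct marginal law for each of the two processes.

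The key step is to verify that the coupling preserves the order at every event. Suppose $\boldsymbol{X}^{(1)}(t-)\leq\boldsymbol{X}^{(2)}(t-)$, so the idle set of system $2$ is contained in the idle set of system $1$. A departure triggered by the clock of server $v$ is immediate: the only non-trivial case is $X_v^{(1)}(t-)=0$ and $X_v^{(2)}(t-)=1$, in which case system $1$ is unchanged and $X_v^{(2)}$ flips to $0$, preserving order. For an arrival with permutation $\pi$: if system $2$ has no idle servers, only system $1$ may update, setting some $X_v^{(1)}$ to $1=X_v^{(2)}$, and the ordering is preserved. Otherwise let $v_2$ be the first idle server in system $2$ under $\pi$; since the idle set of system $2$ is contained in that of system $1$, $v_2$ is also idle in system $1$, so the first idle server in $\pi$ for system $1$, call it $v_1$, either equals $v_2$ or precedes it in $\pi$. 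In the former case both coordinates flip together; in the latter case $v_1$ is busy in system $2$ (as it precedes $v_2$, the first idle there), so $X_{v_1}^{(1)}$ flips to $1=X_{v_1}^{(2)}$, and $X_{v_2}^{(2)}$ flips to $1\geq X_{v_2}^{(1)}$. In every case the order is preserved.

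The main obstacle lies in the arrival event, since the two systems may route the same task to different servers, so any workable coupling of the routing must be compatible with the coordinatewise order on the state. The permutation-based selection accomplishes this by fixing a canonical order in which the smaller system's choice is always ``no later'' than the larger system's choice, which is what the case analysis formalizes. With monotonicity preserved at every event type and the number of jumps in any bounded time interval being almost surely finite, a simple induction on the event sequence yields $\boldsymbol{X}^{(1)}(t)\leq\boldsymbol{X}^{(2)}(t)$ for all $t\geq 0$ almost surely.
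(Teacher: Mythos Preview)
Your proof is correct and follows the same overall strategy as the paper: couple the two systems via shared departure clocks and a shared arrival clock, and verify by induction on events that the coordinatewise order is preserved. The only substantive difference is in how the arrival routing is coupled. The paper re-indexes the idle servers at each arrival so that servers idle in both systems receive the lowest labels, then uses a single uniform $U\in[0,1]$ with carefully chosen subintervals so that whenever $U<|S^{(2)}|/|S^{(1)}|$ the two systems pick the same server. Your permutation device achieves the same monotonicity more transparently: scanning a uniformly random permutation and stopping at the first idle server is automatically uniform on the idle set, and the containment $S^{(2)}\subseteq S^{(1)}$ immediately forces system~1's choice to be no later than system~2's in the scan, which is exactly what the case analysis needs. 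Both couplings are valid; yours is arguably cleaner and avoids the bookkeeping of the interval construction, while the paper's makes explicit the probability $|S^{(2)}|/|S^{(1)}|$ that the two systems route identically.
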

Lemma~\ref{lem:stoch_dom} follows from constructing an appropriate coupling between the processes, and showing that the order is maintained across time. The proof is given in Appendix \ref{sec:stoch_dom}.\\

Using the monotonicity given in Lemma \ref{lem:stoch_dom}, we obtain the following exponential mixing time result.

\begin{lemma} \label{thm:subcrit_mixing_time}
Let $\boldsymbol{X}^{(1)}(t)$ and $\boldsymbol{X}^{(2)}(t)$ be two copies of the queue length process such that stochastically $\boldsymbol{X}^{(1)}(0) \leq \boldsymbol{X}^{(2)}(0)$, where inequality is considered coordinatewise. Then, there exists a joint probability space such that
\begin{equation}
    \E\left[ \frac{1}{n} \sum_{v \in [n]} \left\lvert X_v^{(2)}(t) - X_v^{(1)}(t) \right\rvert \right] \leq \exp\left( -\mu_- t \right).
\end{equation}
\end{lemma}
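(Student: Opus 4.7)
The plan is to leverage the monotone coupling from Lemma~\ref{lem:stoch_dom} so that $X_v^{(1)}(t) \leq X_v^{(2)}(t)$ almost surely for every $v \in [n]$ and every $t \geq 0$. Under this coupling the absolute value can be dropped: $|X_v^{(2)}(t)-X_v^{(1)}(t)| = X_v^{(2)}(t)-X_v^{(1)}(t) \in \{0,1\}$. Defining $D(t):= \sum_{v\in[n]} \bigl(X_v^{(2)}(t) - X_v^{(1)}(t)\bigr)$, it suffices to prove $\E[D(t)] \leq n\, e^{-\mu_- t}$, since then dividing by $n$ yields the claim.

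Next, I would classify each server at time $t$ as type A (idle in both copies), type B (busy in both), or type C (idle in copy 1, busy in copy 2); the ordering rules out the fourth combination, and $D(t)$ equals the number of type-C servers. For completions, couple synchronously: attach a common rate-$\mu_v$ clock to every server that is busy in both copies, and an independent rate-$\mu_v$ clock to every type-C server (where only copy 2 can complete). A type-C completion converts that server to type A and decreases $D$ by exactly one. For arrivals, use the following dispatching coupling: draw $V^{(1)}$ uniformly from the idle set $S^{(1)}$ of copy 1; if $V^{(1)} \in S^{(2)}$ set $V^{(2)}:=V^{(1)}$, otherwise draw $V^{(2)}$ uniformly from $S^{(2)}$ (independently). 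A direct calculation shows that $V^{(2)}$ is marginally uniform on $S^{(2)}$, so the JIQ policy is respected in both copies; moreover, a case analysis (paying special attention to the edge case $S^{(2)} = \emptyset \neq S^{(1)}$, where the arrival is dropped in copy 2 but turns a type-C server into a type-B one in copy 1) shows that each arrival either leaves $D$ unchanged or decreases it by one, and thus never breaks the ordering.

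Combining the two event types, the compensator of $D(t)$ satisfies a pointwise bound
\[ \frac{d}{dt}\E[D(t)] \;\leq\; -\E\!\left[\sum_{v \in \mathcal{C}(t)} \mu_v\right] \;\leq\; -\mu_-\,\E[D(t)], \]
since every type-C server contributes rate at least $\mu_-$ and all other drift contributions to $D$ are non-positive. Making this rigorous via the standard Dynkin martingale decomposition of $D(t)$, together with Grönwall's inequality, yields $\E[D(t)] \leq D(0)\, e^{-\mu_- t}$, and the trivial bound $D(0) \leq n$ completes the argument.

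The main technical obstacle is verifying that the dispatching coupling described above is indeed (or can be arranged to be) the one underlying Lemma~\ref{lem:stoch_dom}, so that the pathwise ordering and the drift bound coexist on a single probability space; if Lemma~\ref{lem:stoch_dom} is proven using exactly this monotone coupling of arrivals and the synchronized coupling of completions, then the event-by-event analysis above transfers directly. A secondary subtlety is the edge case $S^{(2)} = \emptyset$, which must be treated separately so that the marginals of the coupling still match the JIQ policy in copy 1 while $D$ is guaranteed to strictly decrease.
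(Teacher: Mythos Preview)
Your proposal is correct and follows essentially the same route as the paper: work on the monotone coupling of Lemma~\ref{lem:stoch_dom}, drop the absolute values, observe that arrivals never increase the gap $D(t)$ while each mismatched (type-C) server contributes a downward drift of at least $\mu_-$, and conclude via Gr\"onwall and the trivial bound $D(0)\leq n$. The only cosmetic difference is the arrival coupling: the paper constructs it from a single uniform $U\in[0,1]$ (routing both copies to the same indexed idle server when $U<|S^{(2)}|/|S^{(1)}|$, and otherwise sending copy~1 to a server in $S^{(1)}\setminus S^{(2)}$), whereas you describe a two-stage draw; both preserve the ordering and yield identical drift behavior, so your stated ``main technical obstacle'' is not actually an obstacle.
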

Lemma~\ref{thm:subcrit_mixing_time} follows from the monotonicity of Lemma \ref{lem:stoch_dom}, and applying Gr\"{o}nwall's inequality on the dynamics of the system. The proof is given in Appendix \ref{sec:subcrit_mixing_time}.\\

We now state the concentration result on the fraction of busy servers, as mentioned before.

\begin{lemma}
\label{thm:subcrit_concentrate}
Let $\varepsilon$ be such that $0 < \varepsilon < \delta / \mu_-$. Then,
\begin{equation}
    \E\left[ \left\lvert \frac{1}{n} \sum_{v \in [n]} X_v(\infty) - c(\bmu) \right\rvert \right]
    \leq \left(1 + \sqrt{\frac{8 (\mu_+ + \lambda) \log(n)}{\alpha}}\right)n^{-\frac{\mu_-}{\alpha}},
\end{equation}
where
\[ \alpha := 2 \left(\mu_- + \frac{6 \lambda \mu_+}{\varepsilon \mu_-} + \mu_+\right) \qquad \text{ and } \qquad c(\bmu) := \int\limits_0^\infty \bar{\phi}_{\frac{\log(n)}{\alpha}}(\diff x), \]
for $\bar{\phi}_t$ as defined in Equation \eqref{eq:phi-t}, with $\bar{\phi}_0(\diff x) = 0$.
\end{lemma}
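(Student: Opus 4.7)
The plan is to compare $\frac{1}{n}\sum_v X_v(\infty)$ to $c(\bmu) = \int_0^\infty \bar{\phi}_T(\diff x)$ at the intermediate time $T := \log(n)/\alpha$, combining two ingredients: the mixing-time estimate of Lemma~\ref{thm:subcrit_mixing_time} (which forces the state at time $T$ to be close to stationary regardless of the initial condition) and the functional law of large numbers of Lemma~\ref{thm:subcrit_transient_limit} (which, applied to a process started from all-idle, tracks $\int \bar{\phi}_t(\diff x)$). The choice $T = \log(n)/\alpha$ is made precisely to balance the two errors.

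Concretely, I would take two copies of the queue-length process, $\boldsymbol{X}^{(0)}$ starting from $\boldsymbol{0}$ and $\boldsymbol{X}^{(\infty)}$ starting from the stationary distribution. Since $\boldsymbol{X}^{(0)}(0) \leq \boldsymbol{X}^{(\infty)}(0)$ trivially, Lemma~\ref{lem:stoch_dom} yields a coupling preserving this order for all $t \geq 0$. Writing $\bar{X}^{(\cdot)}(t) := \frac{1}{n}\sum_v X_v^{(\cdot)}(t)$ and using $\bar{X}^{(\infty)}(T) \stackrel{d}{=} \frac{1}{n}\sum_v X_v(\infty)$, the triangle inequality bounds the quantity of interest by $\E[|\bar{X}^{(\infty)}(T) - \bar{X}^{(0)}(T)|] + \E[|\bar{X}^{(0)}(T) - c(\bmu)|]$. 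The first term is at most $e^{-\mu_- T}$ by Lemma~\ref{thm:subcrit_mixing_time}. For the second, I apply Lemma~\ref{thm:subcrit_transient_limit} to $\boldsymbol{X}^{(0)}$ with $f \equiv 1$; the hypothesis $\int \bar{\phi}_0(\diff x) = 0 \leq 1 - \delta/\mu_+$ is automatic because $\delta = \frac{1}{n}\sum_v \mu_v - \lambda \leq \mu_+ - \lambda < \mu_+$, and the initial condition $\bar{\phi}_0 \equiv 0$ matches $\boldsymbol{X}^{(0)}(0) = \boldsymbol{0}$. This yields $\sqrt{8(\mu_+ + \lambda)T/n}\exp((6\lambda\mu_+/\delta + \mu_+)T)$.

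Substituting $T = \log(n)/\alpha$ makes the first term equal to $n^{-\mu_-/\alpha}$. For the transient term, the hypothesis $\varepsilon < \delta/\mu_-$ gives $6\lambda\mu_+/\delta \leq 6\lambda\mu_+/(\varepsilon\mu_-)$, and the definition $\alpha = 2(\mu_- + 6\lambda\mu_+/(\varepsilon\mu_-) + \mu_+)$ then yields $(6\lambda\mu_+/\delta + \mu_+)/\alpha \leq 1/2 - \mu_-/\alpha$. Hence $\exp((6\lambda\mu_+/\delta + \mu_+)T) \leq n^{1/2 - \mu_-/\alpha}$, and multiplying by $\sqrt{8(\mu_+ + \lambda)T/n} = \sqrt{8(\mu_+ + \lambda)\log(n)/(n\alpha)}$ yields $\sqrt{8(\mu_+ + \lambda)\log(n)/\alpha}\cdot n^{-\mu_-/\alpha}$. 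Summing the two bounds gives the claim.

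The main obstacle, given that the prior lemmas have already done the heavy lifting, is the bias--variance style balancing that dictates both the choice of $T$ and the precise form of $\alpha$: running the chain longer improves the mixing estimate (exponent $\mu_-$) but inflates the transient control (exponent $6\lambda\mu_+/\delta + \mu_+$), and $\alpha$ is precisely the value that equates these two errors at $T = \log(n)/\alpha$ up to the $\sqrt{\log n}$ prefactor. The only real subtlety beyond the algebra is to make sure the coupling is set up so the mixing-time estimate can be invoked while still comparing one of the coupled chains to $\bar{\phi}_T$; starting $\boldsymbol{X}^{(0)}$ from $\boldsymbol{0}$ (matching $\bar{\phi}_0 \equiv 0$) accomplishes both.
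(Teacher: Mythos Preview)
Your proposal is correct and follows essentially the same approach as the paper: couple an empty-start copy with a stationary-start copy via Lemma~\ref{lem:stoch_dom}, bound the gap between them at time $T=\log(n)/\alpha$ using Lemma~\ref{thm:subcrit_mixing_time}, bound the gap between the empty-start copy and $\int\bar{\phi}_T(\diff x)$ using Lemma~\ref{thm:subcrit_transient_limit} with $f\equiv 1$, and combine. Your write-up is in fact slightly more explicit than the paper's, spelling out both the verification of the hypothesis $\int\bar{\phi}_0(\diff x)\leq 1-\delta/\mu_+$ and the algebra showing that the definition of $\alpha$ makes $(6\lambda\mu_+/\delta+\mu_+)/\alpha\leq 1/2-\mu_-/\alpha$.
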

Lemma~\ref{thm:subcrit_concentrate} is established by considering two coupled systems, one starting empty and one starting in steady state. On the one hand, Lemma \ref{thm:subcrit_mixing_time} implies that these two process converge to each other exponentially fast in time. On the other hand, Lemma \ref{thm:subcrit_transient_limit} implies that the system that starts empty is close to the integral of the measure-valued auxiliary process in Equation~\eqref{eq:phi-t} (which is deterministic), for $n$ sufficiently large. Therefore, combining these two results, we bound the distance between the fraction of servers in steady state, and the integral of the auxiliary process at an appropriately chosen time (at which point it is equal to $c(\bmu)$). The proof is given in Appendix \ref{sec:subcrit_concentrate}.\\

Before stating and proving the main theorem for the subcritical case, we first need to establish that the probability of all servers being busy is exponentially small in $n$.

\begin{lemma}
\label{thm:subcrit_bound}
Let $\varepsilon$ be such that $0<\varepsilon \leq \delta / (3 \mu_+)$. Then,
\begin{equation}
    \P\left( \frac{1}{n} \sum_{v \in [n]} X_v(\infty) \geq 1 - \varepsilon \right)
    \leq \left( 1 - \frac{\mu_+ \varepsilon}{\lambda + \mu_+ \varepsilon} \right)^{\varepsilon n}.
\end{equation}
\end{lemma}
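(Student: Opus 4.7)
I would follow the template of Lemma~\ref{thm:supercrit_bound}: stochastically bound the number of busy servers $B(t):=\sum_{v\in[n]} X_v(t)$ in the ``bad'' region $\{B\geq(1-\varepsilon)n\}$ by a tractable birth--death chain and then extract the stated tail estimate.

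\textit{Step 1 (Uniform lower bound on the total completion rate).} For any configuration $X$ with $B(X)\geq(1-\varepsilon)n$, I would observe that the aggregate service rate of the busy servers satisfies
\[
D(X):=\sum_{v:\,X_v=1}\mu_v \;=\; \sum_{v\in[n]}\mu_v - \sum_{v:\,X_v=0}\mu_v \;\geq\; (\lambda+\delta)n-\mu_+\bigl(n-B(X)\bigr) \;\geq\; \lambda n+\mu_+\varepsilon n,
\]
where the first inequality uses $\mu_v\leq\mu_+$ and the identity $\sum_v\mu_v=(\lambda+\delta)n$ from \eqref{eq:delta}, and the second uses $n-B(X)\leq\varepsilon n$ together with the hypothesis $\varepsilon\leq\delta/(3\mu_+)$, which yields $\delta-\mu_+\varepsilon\geq\mu_+\varepsilon$. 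This bound is uniform over all configurations $X$ with $B(X)$ in the bad region.

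\textit{Step 2 (Embedded jump chain and gambler's ruin).} Setting $K:=\lceil(1-\varepsilon)n\rceil$, Step~1 implies that, whenever $B\geq K$, the embedded jump chain transitions upward (an arrival preceding any service completion) with probability
\[
\frac{\lambda n}{\lambda n + D(X)} \;\leq\; \frac{\lambda n}{\lambda n+\mu_+\varepsilon n} \;=\; \frac{\lambda}{\lambda+\mu_+\varepsilon} \;=:\; r.
\]
A standard gambler's-ruin computation then shows that, starting from $B=K$, the probability of $B$ ever climbing all the way to $n$---which requires at least $\varepsilon n$ consecutive upward jumps of the embedded chain---is at most $r^{\varepsilon n}$.

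\textit{Step 3 (Conversion to a stationary tail bound via a dominating birth--death chain).} Using the configuration-uniform bound from Step~1, I would pathwise dominate $B(t)$, within the bad region, by a birth--death chain $\bar B(t)$ on $\{0,1,\dots,n\}$ whose upward rate is $\lambda n$ and whose downward rate is $\lambda n+\mu_+\varepsilon n$ throughout $\{k\geq K\}$ (with any consistent rates below $K$ guaranteeing $\bar B(0)\geq B(0)$). The stationary distribution of $\bar B$ restricted to the bad region is geometric with ratio $r$, and combining this explicit tail with the excursion bound of Step~2 yields $\P\bigl(B(\infty)\geq(1-\varepsilon)n\bigr)\leq r^{\varepsilon n}$, which is the claimed estimate.

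\textit{Main obstacle.} The principal subtlety is that $B(t)$ is not itself a birth--death chain: its downward rate $D(X)$ depends on which particular servers are busy, not just on the total count $B(X)$. Setting up the stochastic dominance in Step~3 therefore hinges on the configuration-uniform lower bound from Step~1, and the hypothesis $\varepsilon\leq\delta/(3\mu_+)$ is precisely what allows this uniform bound to produce the clean ratio $r=\lambda/(\lambda+\mu_+\varepsilon)$ appearing in the conclusion.
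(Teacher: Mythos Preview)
Your overall strategy---dominate $B(t)=\sum_v X_v(t)$ by a tractable birth--death chain and read off a geometric tail---matches the paper's. However, the execution has a genuine gap.

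The problem is that you use a \emph{single} threshold $K=\lceil(1-\varepsilon)n\rceil$, both as the boundary above which your departure-rate bound holds and as the level whose stationary excess you want to control. These roles conflict. For the pathwise domination $\bar B\geq B$ to hold at every state $j$, the down-rate of $\bar B$ at $j$ cannot exceed $\min_{X:B(X)=j}D(X)$; below $K$ this minimum can be arbitrarily small, so whatever ``consistent rates'' you choose there, the dominating chain is pushed upward and $\P(\bar B(\infty)\geq K)$ need not be small---if $\bar B$ simply reflects at $K$ it equals $1$. Your Step~2 does not repair this: reaching $n$ from $K$ does \emph{not} require $\varepsilon n$ consecutive up-jumps, and the actual gambler's-ruin probability from $K$ to $n$ is bounded away from zero whenever $r\geq 1/2$ (equivalently $\mu_+\varepsilon\leq\lambda$, the typical regime for small $\varepsilon$). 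In any case that hitting probability is a different quantity from the stationary excess $\P(B(\infty)\geq K)$ the lemma asks for.

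The paper's remedy is a second, lower threshold $k=\lfloor(1-2\varepsilon)n\rfloor$. The dominating chain $Y$ lives on $\{k,\dots,n\}$, reflecting at $k$, with constant death rate $\sum_v\mu_v-2\mu_+\varepsilon n$. For $B>k$ there are at most $2\varepsilon n$ idle servers, so $D(X)\geq(\lambda+\delta)n-2\mu_+\varepsilon n\geq\lambda n+\mu_+\varepsilon n$; this is where $\varepsilon\leq\delta/(3\mu_+)$ is used at full strength. The stationary law of $Y$ is explicitly geometric with ratio $\rho\leq r$, and one then bounds the \emph{higher} level $l=\lfloor(1-\varepsilon)n\rfloor$ via $\P(Y(\infty)\geq l)\leq\rho^{\,l-k}\leq r^{\varepsilon n}$. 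The $\varepsilon n$ gap between the reflecting barrier $k$ and the target level $l$ is what produces the exponent; your single-threshold setup leaves no such gap. Your Step~1 is correct, but it should be carried out at the lower threshold $(1-2\varepsilon)n$.
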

Similarly to Lemma \ref{thm:supercrit_bound}, we establish Lemma~\ref{thm:subcrit_bound} by coupling the system with an appropriate birth-death process, and establishing the result for this simpler process. The proof is given in Appendix \ref{sec:subcrit_bound}.\\

Leveraging the concentration results in Lemmas~\ref{thm:supercrit_bound} and~\ref{thm:subcrit_bound}, in Lemma~\ref{thm:subcrit_idle} below we obtain a bound on how much the expected delays deviate from appropriate constants.

\begin{lemma}
\label{thm:subcrit_idle}
Let $\varepsilon$ be such that $0< \varepsilon \leq \delta / \mu_-$. Then,
\begin{equation}\label{eq:aux2}
\begin{multlined}
    \left\lvert \E\left[ I_v(\infty) \right] - \frac{1 - c(\bmu)}{\lambda} \right\rvert
    \leq \frac{2 \varepsilon}{\lambda}
    + \left( \frac{3 \sqrt{2} \mu_+}{\varepsilon \lambda \mu_-} + \frac{3 \mu_+}{\varepsilon \mu_-^2} \right) \sqrt{\frac{1 + \sqrt{\frac{8 (\mu_+ + \lambda) \log(n)}{\alpha}}}{n^{\frac{\mu_-}{\alpha}}}} \\
    + \left( \frac{\sqrt{2} n}{\lambda} + \frac{n}{\mu_-} \right) \left( 1 - \frac{\varepsilon \mu_-}{3 \lambda + \varepsilon \mu_-} \right)^{\varepsilon \mu_- n / (6 \mu_+)},
\end{multlined}
\end{equation}
where $c(\bmu)$ and $\alpha$ are as defined in Lemma \ref{thm:subcrit_concentrate}.
\end{lemma}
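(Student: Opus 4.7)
The plan is to derive an exact formula for $\E[I_v(\infty)]$ via alternating renewal theory at server $v$, and then approximate it using the concentration results from Lemmas~\ref{thm:subcrit_concentrate} and~\ref{thm:subcrit_bound}. The three terms in the stated bound correspond, respectively, to the coarse-graining error from truncating the analysis to an $\varepsilon$-window around $1-c(\bmu)$, the Cauchy-Schwarz contribution of the moderate deviation event for the number of idle servers, and the contribution of the rare large-deviation event where nearly all servers are simultaneously busy.

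First, using the alternating renewal structure of server $v$'s idle periods $L_v$ and its (iid, independent of $L_v$) $\mathrm{Exp}(\mu_v)$ busy periods, an explicit time-average computation over a cycle gives
\[ \E[I_v(\infty)] = \frac{\E[L_v^2]/2 + \E[L_v]/\mu_v}{\E[L_v] + 1/\mu_v}. \]
This reduces exactly to $(1-c(\bmu))/\lambda$ whenever $L_v \sim \mathrm{Exp}(\lambda/(1-c(\bmu)))$, since then $\E[L_v^2] = 2\E[L_v]^2$ and the numerator factors as $\E[L_v](\E[L_v] + 1/\mu_v)$. The remaining task is therefore to show that $L_v$ is approximately exponential with this rate in the many-server limit, so that both $\E[L_v]$ and $\E[L_v^2]$ are close to their exponential counterparts.

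Second, I would rewrite $\E[L_v]$ and $\E[L_v^2]$ as expectations against the stationary law of $K_\infty$, the number of idle servers, using that while $v$ is idle at time $t$ the next arrival reaches $v$ at instantaneous rate $\lambda n/K_t$. Splitting the expectation via the typical event $\mathcal{G} := \{|K_\infty/n - (1-c(\bmu))| \leq \varepsilon\}$, on $\mathcal{G}$ the effective arrival rate to $v$ lies within $O(\varepsilon)$ of $\lambda/(1-c(\bmu))$, producing the $2\varepsilon/\lambda$ error term. Lemma~\ref{thm:subcrit_concentrate} bounds $\E[|K_\infty/n - (1-c(\bmu))|]$, so Markov's inequality controls $\P(\mathcal{G}^c)$, and a Cauchy-Schwarz bound of the form $\E[X\mathbbm{1}_{\mathcal{G}^c}] \leq \sqrt{\E[X^2]\,\P(\mathcal{G}^c)}$ delivers the middle term, whose $\sqrt{\cdot}$ factor is exactly the square root of the Lemma~\ref{thm:subcrit_concentrate} bound and whose $1/\varepsilon$ prefactor comes from the Markov step.

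Third, for the extreme event that $K_\infty/n$ is close to $0$ on which the arrival rate $\lambda n/K_t$, and hence our approximation of $L_v$ and $L_v^2$, blows up, I would invoke Lemma~\ref{thm:subcrit_bound} for an exponential-in-$n$ bound on its probability, paired with crude $O(n)$ worst-case deterministic bounds on the contribution of $L_v$ and $L_v^2$ on this event; the exponential decay dominates the conservative $n$ prefactor, giving the last term. The principal obstacle will be transferring the concentration statements about the unconditional stationary distribution of $K_t$ (as given in Lemmas~\ref{thm:subcrit_concentrate} and~\ref{thm:subcrit_bound}) into statements about the distribution of $K_t$ seen during an idle excursion of server $v$, since this conditioning is size-biased in a non-trivial way. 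A Palm-calculus reformulation, possibly combined with the exponential mixing from Lemma~\ref{thm:subcrit_mixing_time} to decouple $v$'s excursion from the rest of the system's history, should close this gap.
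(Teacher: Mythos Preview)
Your three-term error decomposition (typical window, moderate deviation via Cauchy--Schwarz, extreme event via Lemma~\ref{thm:subcrit_bound}) matches the paper's structure, but your route through the renewal formula for $\E[I_v(\infty)]$ in terms of moments of $L_v$ is genuinely different from what the paper does, and the obstacle you flag at the end is precisely the step the paper's argument is designed to sidestep.

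The paper never analyzes the idle-period law $L_v$ or invokes Palm calculus. Instead it writes $\E[I_v(\infty)] = \E[(1-X_v(\infty))I_v(\infty)] + \E[X_v(\infty)I_v(\infty)]$ and handles each piece using three \emph{stationary balance identities} (Lemma~\ref{lem:lyapunov_eq}), obtained by differentiating $\E[X_v]$, $\E[I_v]$, and $\E[(1-X_v)I_v]$ in steady state:
\[
\E\!\left[\frac{\lambda n(1-X_v)}{\sum_{v'}(1-X_{v'})}\right]=\mu_v\E[X_v],\quad
\mu_v\E[X_v I_v]=\E[1-X_v],\quad
\E\!\left[\frac{\lambda n(1-X_v)I_v}{\sum_{v'}(1-X_{v'})}\right]=\E[1-X_v].
\]
The trick is then purely algebraic: multiply and divide $(1-X_v)I_v$ by $\frac{1}{n}\sum_{v'}(1-X_{v'})$, replace this empirical fraction by $1-c(\bmu)$ up to a concentration error, and use the third identity to collapse the main term to $\frac{1-c}{\lambda}\E[1-X_v]$. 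The term $\E[X_v I_v]$ is treated the same way after the second identity rewrites it as $\E[(1-X_v)]/\mu_v$. Because everything stays inside a single stationary expectation, the conditioning bias you worry about (the law of $K_t$ \emph{during} an idle excursion of $v$) never enters: the factor $(1-X_v)$ sits harmlessly inside the expectation alongside the global quantity $\frac{1}{n}\sum_{v'}(1-X_{v'})$, and the event splitting on $E_1,E_2$ is done on the unconditional stationary law.

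Your approach could in principle be completed, but ``Palm calculus plus mixing from Lemma~\ref{thm:subcrit_mixing_time}'' is doing a lot of unspecified work: you would need to control the law of the entire trajectory $(K_t)$ over an idle excursion of $v$, not just a one-time marginal, and the size-biasing is in the wrong direction to be dismissed cheaply. The paper's identity-based route is both shorter and avoids this difficulty entirely; I would recommend adopting Lemma~\ref{lem:lyapunov_eq} as the starting point rather than the renewal expression for $\E[I_v(\infty)]$.
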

The proof of Lemma~\ref{thm:subcrit_idle} is given in Appendix \ref{sec:subcrit_idle}.\\

Now, taking the limit as $n\to\infty$ in Equation \eqref{eq:aux2} yields
\begin{equation}
    \lim\limits_{n\to\infty} \left\lvert \E\left[ I_v(\infty) \right] - \frac{1 - c(\bmu)}{\lambda} \right\rvert
    \leq \frac{2 \varepsilon}{\lambda}.
\end{equation}
Since this holds for all $\varepsilon>0$ small enough, we have
$$
    \lim\limits_{n\to\infty} 
    \E\left[ I_v(\infty) \right] = \frac{1 - c(\bmu)}{\lambda}.
$$
The only thing left in the proof of Theorem \ref{thm:idle} is to obtain the bounds on the constants $c(\bmu)$. For this purpose, we have the following result.

\begin{lemma}
\label{lem:barphi_bounded}
Let $\bar{\phi}_t$ be as defined in Lemma \ref{thm:subcrit_transient_limit}. Then,
\begin{equation}
\begin{aligned}
    \int_0^\infty \bar{\phi}_t(\diff x)
    &\geq\frac{\lambda}{\max_{v \in [n]} \mu_v} - \left\lvert \int_0^\infty \bar{\phi}_0(\diff x) - \frac{\lambda}{\max_{v \in [n]} \mu_v} \right\rvert \exp(-\max_{v \in [n]} \mu_v t) \\
    \int_0^\infty \bar{\phi}_t(\diff x)
    &\leq \frac{\lambda}{\min_{v \in [n]} \mu_v} + \left\lvert \int_0^\infty \bar{\phi}_0(\diff x) - \frac{\lambda}{\min_{v \in [n]} \mu_v} \right\rvert \exp(-\min_{v \in [n]} \mu_v t).
\end{aligned}
\end{equation}
\end{lemma}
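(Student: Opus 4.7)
The plan is to reduce the defining equation for $\bar{\phi}_t$ to a scalar ODE for the total mass $M_t := \int_0^\infty \bar{\phi}_t(\diff x)$ by choosing the constant test function $f \equiv 1$, and then to invoke Gr\"onwall's inequality to convert a pair of linear differential inequalities into the exponential decay bounds in the statement.

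First, I would observe that $\Phi(\diff x) = n^{-1}\sum_{v\in[n]} \delta_{\mu_v}(\diff x)$ is a probability measure, so $\int_0^\infty \Phi(\diff x) = 1$. Substituting $f \equiv 1$ into the integral equation for $\bar{\phi}_t$ makes the two $(1 - M_s)$ factors cancel, yielding
\[
M_t \;=\; M_0 + \int_0^t \Bigl(\lambda - \int_0^\infty x\,\bar{\phi}_s(\diff x)\Bigr)\,\diff s.
\]
Since the sample-path construction keeps $\bar{\phi}_t$ supported on the finite set $\{\mu_v : v\in[n]\}$ as a non-negative sub-probability measure, we obtain the sandwich $\min_v \mu_v \cdot M_t \leq \int_0^\infty x\,\bar{\phi}_t(\diff x) \leq \max_v \mu_v \cdot M_t$, so that $M_t$ is absolutely continuous with
\[
\lambda - \max_{v\in[n]} \mu_v \cdot M_t \;\leq\; M_t' \;\leq\; \lambda - \min_{v\in[n]} \mu_v \cdot M_t.
\]

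Next, for the lower bound, set $\mu^\star := \max_{v\in[n]}\mu_v$ and $V_t := M_t - \lambda/\mu^\star$. The left-hand inequality rearranges to $V_t' + \mu^\star V_t \geq 0$, i.e.\ $\tfrac{\diff}{\diff t}\bigl(e^{\mu^\star t}V_t\bigr) \geq 0$, and therefore $V_t \geq V_0 e^{-\mu^\star t}$. Using $V_0 \geq -|V_0|$ gives
\[
M_t \;\geq\; \frac{\lambda}{\max_{v\in[n]} \mu_v} - \left\lvert M_0 - \frac{\lambda}{\max_{v\in[n]} \mu_v}\right\rvert \exp(-\max_{v\in[n]} \mu_v\, t),
\]
which is the first claimed inequality. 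Replacing $\max$ by $\min$ and reversing the inequality in the $V_t$ calculation yields the second bound by the identical argument.

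There is essentially no hard obstacle here: the only point requiring mild care is the well-definedness of the scalar ODE, specifically that $M_s < 1$ so the $(1-M_s)$ cancellation is legitimate; this follows from the existence/uniqueness of $\bar{\phi}_t$ established in the remark following its definition (and is preserved because $M_t' \leq \lambda - \min_v \mu_v \cdot M_t$ forces $M_t$ to stay below $\lambda/\min_v \mu_v$ whenever $M_0$ does, consistent with the subcritical regime). Once this is in place, the proof reduces to a one-line application of the integrating-factor form of Gr\"onwall's inequality in each direction.
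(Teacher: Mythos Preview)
Your proposal is correct and follows essentially the same route as the paper: both reduce the measure-valued equation to the scalar equation $M_t' = \lambda - \int_0^\infty x\,\bar{\phi}_t(\diff x)$ via $f\equiv 1$, sandwich the first moment between $\min_v \mu_v \cdot M_t$ and $\max_v \mu_v \cdot M_t$, and then solve the resulting linear differential inequalities. The only cosmetic difference is that the paper introduces explicit comparison solutions $y_1,y_2$ and argues via derivatives at crossing times, whereas you apply the integrating factor directly; these are equivalent formulations of the same Gr\"onwall-type argument.
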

The proof of Lemma~\ref{lem:barphi_bounded} consists of defining simple upper and lower bounds for the process of interest and bounding them, and it is given in Appendix \ref{sec:barphi_bounded}.\\

Finally, Lemma~\ref{thm:subcrit_concentrate} states that
\[ c(\bmu)=\int\limits_0^\infty \bar{\phi}_{\frac{\log(n)}{\alpha}}(dx), \]
with $\int_0^\infty \bar{\phi}_0(dx)=0$. Then, applying Lemma \ref{lem:barphi_bounded} for $t=\log(n)/\alpha$ and $\int_0^\infty \bar{\phi}_0(dx)=0$, and taking the limit as $n\to\infty$ yields bounds on $c(\bmu)$.
This completes the proof of Theorem \ref{thm:idle}.

\subsection{Proof of Lemma \ref{lem:maxmin_conv}} \label{sec:proof_maxmin_conv}

Theorem \ref{thm:idle} implies that there exist constants $0 \leq c^n(\bmu^n) \leq 1$ and a function $\varepsilon(n)\in o(1)$ such that
\begin{equation}
    \left\lvert \E_{\bmu^n}\left[ I_v^n(\infty) \right] - \frac{1 - c^n(\bmu^n)}{\lambda} \right\rvert \leq \varepsilon(n),
\end{equation}
for all $n \geq 1$.
Also, as discussed in Remark~\ref{rem:mu-up-low}, $\bmu^n \in [\mu_-, \mu_+]^n$. Then, for all $n \geq 1$ and $t \geq 0$,
\begin{equation}
\begin{multlined}
    \frac{\diff}{\diff t} \mu_v^n(t)
    = -g'(\mu_v^n(t)) - h'(\mu_v^n(t)) \left( \E_{\boldsymbol{\mu}^n(t)}[I_v^n(\infty)] + \frac{1}{\mu_v^n(t)} \right) \\
    = -g'(\mu_v^n(t)) - h'(\mu_v^n(t)) \left( \frac{1 - c^n(\boldsymbol{\mu}^n(t))}{\lambda} + \frac{1}{\mu_v^n(t)} \right) \pm \varepsilon(n) \lVert h' \rVert_\infty,
\end{multlined}
\end{equation}
where $x =y\pm z$ means that $x\in [y-z, y+z]$. Therefore, since Assumption \ref{ass:opt} implies that $g'$ and $h'$ are increasing, and $h'(x)/x$ is non-decreasing, we have
\begin{equation}
\begin{multlined}
    \frac{\diff}{\diff t} \left( \max_{v \in [n]} \mu_v^n(t) - \min_{v \in [n]} \mu_v^n(t) \right)
    \leq - \left( g'\left( \max_{v \in [n]} \mu_v^n(t) \right) - g'\left( \min_{v \in [n]} \mu_v^n(t) \right) \right) \\ 
    - \frac{1 - c^n(\boldsymbol{\mu}^n(t))}{\lambda} \left( h'\left( \max_{v \in [n]]} \mu_v^n(t) \right) - h'\left( \min_{v \in [n]} \mu_v^n(t) \right) \right) + 2 \varepsilon(n) \lVert h' \rVert_\infty \\
    \leq -\left( \sigma_g + \frac{\sigma_h \left( 1 - c^n(\boldsymbol{\mu}^n(t)) \right)}{\lambda} \right) \left( \max_{v \in [n]} \mu_v^n(t) - \min_{v \in [n]} \mu_v^n(t) \right) + 2 \varepsilon(n) \lVert h' \rVert_\infty.
\end{multlined}
\end{equation}
It follows that
\begin{equation}
\begin{multlined}
    \max_{v \in [n]} \mu_v^n(t) - \min_{v \in [n]} \mu_v^n(t) \leq \exp\left( -\left( \sigma_g + \frac{\sigma_h \left( 1 - c^n(\boldsymbol{\mu}^n(t)) \right)}{\lambda} \right) t \right) 
    + \frac{2 \varepsilon(n) \lVert h' \rVert_\infty}{\sigma_g + \frac{\sigma_h \left( 1 - c^n(\boldsymbol{\mu}^n(t)) \right)}{\lambda}}.
\end{multlined}
\end{equation}
Taking the limit as $t$ and $n$ goes to infinity concludes the proof of Lemma~\ref{lem:maxmin_conv}.

\subsection{Proof of Theorem \ref{thm:mu_to_mustar}} \label{sec:proof_mu_to_mustar}

Lemma \ref{lem:maxmin_conv} implies that there exists a function $\epsilon_d(n,t)$ that converges to zero as $n$ and $t$ go to infinity (regardless of the order in which these limits are taken), such that
\begin{equation}
    \max_{v \in [n]} \mu_v^n(t) - \min_{v \in [n]} \mu_v^n(t) \leq \varepsilon_d(n,t),
\end{equation}
for all $n \geq 1$ and $t \geq 0$. Now, we claim that there exist a constant $c_1 > 0$ and a function $\varepsilon(n,t)$ that converges to zero as $n$ and $t$ go to infinity (regardless of the order in which these limits are taken), such that
\begin{equation}\label{eq:gron-2.10}
    \frac{\diff}{\diff t} \left\lvert \lambda g'(\mu_v^n(t)) + h'(\mu_v^n(t)) \right\rvert
    \leq - c_1 \left\lvert \lambda g'(\mu_v^n(t)) + h'(\mu_v^n(t)) \right\rvert + \varepsilon(n,t),
\end{equation}
for all $n$ and $t$ sufficiently large, as long as $\mu_v^n(t)\neq \mu^*$ (which is the only value for which $\lambda g'(\mu_v^n(t)) + h'(\mu_v^n(t))=0$). This claim is sufficient to complete the proof of Theorem \ref{thm:mu_to_mustar}. To prove the claim, we distinguish again whether the system is in the supercritical or the subcritical regime. That is, we distinguish whether 
$\delta$ in Equation~\eqref{eq:delta}
is non-positive or positive, respectively.

\subsubsection{Supercritical regime.}
In this subsection, we prove Equation~\eqref{eq:gron-2.10} in the supercritical regime.
Note that Theorem \ref{thm:idle} implies that there exists a function $\varepsilon_I(n)$ with $\varepsilon_I(n)\to 0$ as $n\to\infty$ such that
\begin{equation}\label{eq:aux_I}
    \E_{\bmu^n}\left[ I_v^n(\infty) \right] \leq \varepsilon_I(n),
\end{equation}
for all $n \geq 1$. Moreover, since $\delta\leq 0$, we have
\begin{equation}\label{eq:aux_mu}
    \mu_v^n(t)
    \leq \min_{v \in [n]} \mu_v^n(t) + \varepsilon_d(n,t)
    \leq \lambda + \varepsilon_d(n,t)
    \leq \lambda + (1 + \mu_-) \varepsilon_d(n,t).
\end{equation}
Therefore, we have
\begin{equation}\label{eq:aux_dmu}
\begin{aligned}
    \frac{\diff}{\diff t} \mu_v^n(t) 
    &= - g'(\mu_v^n(t)) - h'(\mu_v^n(t)) \left( \E_{\boldsymbol{\mu}^n(t)}\left[ I_v^n(\infty) \right] + \frac{1}{\mu_v^n(t)} \right) 
    \geq - g'(\mu_v^n(t)) - \lVert h' \rVert_\infty \varepsilon_I(n) - \frac{h'(\mu_v^n(t))}{\mu_v^n(t)} \\
    &\geq -g'(\mu_v^n(t)) - \lVert h' \rVert_\infty \varepsilon_I(n) - \frac{h'(\lambda + (1 + \mu_-) \varepsilon_d(n,t))}{\lambda + (1 + \mu_-) \varepsilon_d(n,t)} \\
    &\geq -g'(\lambda) - \lVert g'' \rVert_\infty (1 + \mu_-) \varepsilon_d(n,t) - \lVert h' \rVert_\infty \varepsilon_I(n) - \frac{h'(\lambda)}{\lambda}  \\
    &\hspace{3cm}- \left( \frac{\lVert h'' \rVert_\infty}{\lambda} - \frac{h'(\lambda)}{\lambda (\lambda + (1 + \mu_-) \varepsilon_d(n,t))} \right) (1 + \mu_-) \varepsilon_d(n,t),
\end{aligned}
\end{equation}
where the first inequality follows from Equation \eqref{eq:aux_I}, the second one follows from Equation \eqref{eq:aux_mu} and the fact that $h'(x) / x$ is non-decreasing in $x$, and the third one follows from the convexity of $g$ and $h$. Also, combining the fact that
 $\lambda g'(\lambda) + h'(\lambda) < 0$
by Assumption \ref{ass:opt} with Equation \eqref{eq:aux_mu}, we get that
\begin{equation}
    \lambda g'(\mu_v^n(t)) + h'(\mu_v^n(t)) < 0,
\end{equation}
for all $n$ and $t$ sufficiently large. Combining this with Equation \eqref{eq:aux_dmu}, we obtain
\begin{equation}
\begin{multlined}
    \frac{\diff}{\diff t} \left\lvert \lambda g'(\mu_v^n(t)) + h'(\mu_v^n(t)) \right\rvert
    = - \left( \lambda g''(\mu_v^n(t)) + h''(\mu_v^n(t)) \right) \frac{\diff}{\diff t} \mu_v^n(t) \\
    \leq - \left( \sigma_g + \frac{\sigma_h}{\lambda} \right) \left\lvert \lambda g'(\lambda) + h'(\lambda) \right\rvert + \lVert g'' \rVert_\infty (1 + \mu_-) \varepsilon_d(n,t) + \lVert h' \rVert_\infty \varepsilon_I(n) \\ 
    \hspace{3cm}+ \left( \frac{\lVert h'' \rVert_\infty}{\lambda} - \frac{h'(\lambda)}{\lambda (\lambda + (1 + \mu_-) \varepsilon_d(n,t))} \right) (1 + \mu_-) \varepsilon_d(n,t) \\
    \leq - \left( \sigma_g + \frac{\sigma_h}{\lambda} \right) \left\lvert \lambda g'(\mu_v^n(t)) + h'(\mu_v^n(t)) \right\rvert + \lVert g'' \rVert_\infty (1 + \mu_-) \varepsilon_d(n,t) + \lVert h' \rVert_\infty \varepsilon_I(n) \\ 
    \hspace{3cm}+ \left( \frac{\lVert h'' \rVert_\infty}{\lambda} - \frac{h'(\lambda)}{\lambda (\lambda + (1 + \mu_-) \varepsilon_d(n,t))} \right) (1 + \mu_-) \varepsilon_d(n,t),
\end{multlined}
\end{equation}
for all $n$ and $t$ sufficiently large, where the second inequality follows since $g'(x)$ and $h'(x)$ are continuous and $\mu_v^n(t)$ is bounded, which completes the proof of the claim.

\subsubsection{Subcritical regime.}
In this subsection, we prove Equation~\eqref{eq:gron-2.10} in the subcritical regime.
Theorem \ref{thm:idle} implies that there exists constants $0 \leq c^n(\bmu^n) \leq 1$ and a function $\varepsilon_I(n)$ with $\varepsilon_I(n)\to 0$ as $n\to\infty$ such that
\begin{equation}
    \left\lvert \E_{\bmu^n}\left[ I_v^n(\infty) \right] - \frac{1 - c^n(\bmu^n)}{\lambda} \right\rvert \leq \varepsilon_I(n),
\end{equation}
with
\begin{equation}
    \frac{\lambda}{\max_{v \in [n]} \mu^n_v} - \varepsilon_I(n)
    \leq c^n(\bmu^n)
    \leq \frac{\lambda}{\min_{v \in [n]} \mu^n_v} + \varepsilon_I(n),
\end{equation}
for all $n \geq 1$. Then, 
\begin{equation}
\begin{aligned}
    \left\lvert c^n(\bmu^n(t)) - \frac{\lambda}{\mu_v^n(t)} \right\rvert
    &\leq \left\lvert c^n(\bmu^n(t)) - \frac{\lambda}{\max\limits_{v' \in [n]} \mu_{v'}^n(t)} \right\rvert + \left\lvert \frac{\lambda}{\max\limits_{v' \in [n]} \mu_{v'}^n(t)} - \frac{\lambda}{\mu_v^n(t)} \right\rvert \\
    &\leq 2 \left\lvert \frac{\lambda}{\min\limits_{v' \in [n]} \mu_{v'}^n(t)} - \frac{\lambda}{\max\limits_{v' \in [n]} \mu_{v'}^n(t)} \right\rvert + \varepsilon_I(n)
    \leq \frac{2 \lambda \varepsilon_d(n,t)}{\mu_-^2} + \varepsilon_I(n),
\end{aligned}
\end{equation}
for all $v \in [n]$. Therefore,
\begin{equation}
\begin{multlined}
    \left\lvert \E_{\bmu^n(t)}\left[ I_v^n(\infty) \right] - \frac{1 - \lambda / \mu_v^n(t)}{\lambda} \right\rvert 
    \leq \left\lvert \E_{\boldsymbol{\mu}^n(t)}\left[ I_v^n(\infty) \right] - \frac{1 - c^n(\bmu^n)}{\lambda} \right\rvert
    + \frac{1}{\lambda} \left\lvert c^n(\bmu^n) - \frac{\lambda}{\mu_v^n(t)} \right\rvert \\
    \leq \frac{2 \varepsilon_d(n,t)}{\mu_-^2} + \frac{2\varepsilon_I(n)}{\lambda},
\end{multlined}
\end{equation}
and hence,
\begin{equation}
\begin{aligned}
    \frac{\diff}{\diff t} \mu_v^n(t)
    &= - g'(\mu_v^n(t)) - h'(\mu_v^n(t)) \left( \E_{\boldsymbol{\mu}^n(t)}\left[ I_v^n(\infty) \right] + \frac{1}{\mu_v^n(t)} \right) \\
    &= - g'(\mu_v^n(t)) - \frac{h'(\mu_v^n(t))}{\lambda} \pm 2\left( \frac{\varepsilon_d(n,t)}{\mu_-^2} + \frac{\varepsilon_I(n)}{\lambda} \right) \lVert h' \rVert_\infty,
\end{aligned}
\end{equation}
where $x =y\pm z$ means that $x\in [y-z, y+z]$.. Therefore,
\begin{equation}
\begin{aligned}
    \frac{\diff}{\diff t} \left\lvert \lambda g'(\mu_v^n(t)) + h'(\mu_v^n(t)) \right\rvert
    &= \pm \left( \lambda g''(\mu_v^n(t)) + h''(\mu_v^n(t)) \right) \frac{\diff}{\diff t} \mu_v^n(t) \\
    &\leq -\left( \sigma_g + \frac{\sigma_h}{\lambda} \right) \left\lvert \lambda g'(\mu_v^n(t)) + h'(\mu_v^n(t)) \right\rvert + 2\left( \frac{\varepsilon_d(n,t)}{\mu_-^2} + \frac{\varepsilon_I(n)}{\lambda} \right) \lVert h' \rVert_\infty,
\end{aligned}
\end{equation}
for all $n \geq 1$ and $t \geq 0$, which completes the proof of the claim.

\subsection{Proof of Theorem \ref{thm:optimum}} \label{sec:proof_optimum}

First note that
\begin{equation}
\begin{multlined}
    \inf\limits_{\bmu^n \in \mathbb{R}_+^n } \left\{\lambda g\left( \frac{1}{\E[ S^{n}(\infty) ]} \right) + \frac{1}{n} \sum_{v \in [n]} h(\mu_v^{n}) \right\} = \inf\limits_{\overline{\mu}\geq 0} \inf\limits_{ \substack{\bmu^n \in \mathbb{R}_+^n : \\ \frac{1}{n}\sum\limits_{v\in[n]} \mu_v^n = \overline{\mu}} } \left\{\lambda g\left( \frac{1}{\E[ S^{n}(\infty) ]} \right) + \frac{1}{n} \sum_{v \in [n]} h(\mu_v^{n}) \right\} \\
    \geq \inf\limits_{\overline{\mu}\geq 0} \inf\limits_{ \substack{\bmu^n \in \mathbb{R}_+^n : \\ \frac{1}{n}\sum\limits_{v\in[n]} \mu_v^n = \overline{\mu}} } \left\{\lambda g\left( \frac{1}{\E[ S^{n}(\infty) ]} \right) \right\} + \inf\limits_{ \substack{\bmu^n \in \mathbb{R}_+^n : \\ \frac{1}{n}\sum\limits_{v\in[n]} \mu_v^n = \overline{\mu}} } \left\{ \frac{1}{n} \sum_{v \in [n]} h(\mu_v^{n}) \right\} \\
    = \inf\limits_{\overline{\mu}\geq 0} \inf\limits_{ \substack{\bmu^n \in \mathbb{R}_+^n : \\ \frac{1}{n}\sum\limits_{v\in[n]} \mu_v^n = \overline{\mu}} } \left\{\lambda g\left( \frac{1}{\E[ S^{n}(\infty) ]} \right) \right\} + h(\overline{\mu}), \label{eq:aux_opt}
\end{multlined}
\end{equation}
where the last equality is due to the fact that $h$ is nondecreasing and convex. 
To show that the first term is also minimized when $\bmu^n$ is homogeneous in the limit as $n\to\infty$, we argue that the sojourn time is asymptotically minimized in an homogeneous system. To show this, first note that for $n\in \N$ and any fixed service-rate vector $\bmu^n$, a renewal argument implies that
$\mathbb{E}[X^n_v(\infty)] = 1/(1+\mu^n_v\mathbb{E}_\mu[I^n_v(\infty)])$,
where $\mathbb{E}[X^n_v(\infty)]$ is the fraction of time that server $v$ is busy in steady state. 
Moreover, Theorem~\ref{thm:idle} implies that there exists a function $\varepsilon_I(n)$ with $\varepsilon_I(n)\to 0$ as $n\to\infty$ such that
\begin{equation}\label{eq:frac-busy}
    \mathbb{E}[X^n_v(\infty)]=\frac{1}{1+\mu^n_v\mathbb{E}_\mu[I^n_v(\infty)]} \leq \frac{\lambda}{\lambda+\mu^n_v[1 - c^n(\bmu^n)]} + \varepsilon_I(n).
\end{equation}
Furthermore, in steady state we have that
\[ \frac{1}{n} \sum\limits_{v\in[n]} \mu^n_v \mathbb{E}[X^n_v(\infty)] = \lambda_n(\bmu^n), \]
where $\lambda_n(\bmu^n)$ is the per-server effective arrival rate to the system. 
Recall $\overline{\mu}=\frac{1}{n}\sum\limits_{v\in[n]} \mu_v^n$ and note that Equation~\eqref{eq:frac-busy} yields
\begin{equation}
\begin{multlined}
    \lambda_n(\bmu^n) = \frac{1}{n} \sum\limits_{v\in[n]} \mu^n_v \mathbb{E}[X^n_v(\infty)] 
    \leq \frac{1}{n} \sum\limits_{v\in[n]} \mu^n_v \left(\frac{\lambda}{\lambda+\mu^n_v[1 - c^n(\bmu^n)]} + \varepsilon_I(n) \right) \\
    \leq \frac{1}{n} \sum\limits_{v\in[n]} \overline{\mu} \left(\frac{\lambda}{\lambda+\mu^n_v[1 - c^n(\bmu^n)]} + \varepsilon_I(n) \right) 
    \leq \frac{1}{n} \sum\limits_{v\in[n]} \overline{\mu} \, \Big( \mathbb{E}[X^n_v(\infty)] + 2\varepsilon_I(n) \Big),
\end{multlined}
\end{equation}
and thus
\begin{equation}
    \frac{1}{n} \sum\limits_{v\in[n]} \mathbb{E}[X^n_v(\infty)] \geq \frac{\lambda_n(\bmu^n)}{\overline{\mu}} - 2\varepsilon_I(n).
\end{equation}
On the other hand, Little's law implies that
\begin{equation}
    \lambda_n(\bmu^n) \mathbb{E}[S^n(\infty)] = \frac{1}{n} \sum\limits_{v\in[n]} \mathbb{E}[X^n_v(\infty)],
\end{equation}
and Lemmas \ref{thm:supercrit_bound} and \ref{thm:subcrit_bound}
imply that
$
    \lambda_n(\bmu^n) \geq \min\{\lambda, \overline{\mu}\} - \varepsilon_r(n,\overline{\mu}),
$
where $\varepsilon_r(n,\overline{\mu})\in o(1)$ as a function of $n$. Therefore, it follows that
\begin{equation}
    \mathbb{E}[S^n(\infty)] \geq \frac{1}{\overline{\mu}} - \frac{2\varepsilon_I(n)}{\min\{\lambda, \overline{\mu}\} - \varepsilon_r(n,\overline{\mu})},
\end{equation}
and thus
\begin{align}
    \inf\limits_{ \substack{\bmu^n \in \mathbb{R}_+^n : \\ \frac{1}{n}\sum\limits_{v\in[n]} \mu_v^n = \overline{\mu}} } \left\{ g\left( \frac{1}{\E[ S^{n}(\infty) ]} \right) \right\} \geq g(\overline{\mu}) - \varepsilon_T(n,\overline{\mu}),
\end{align} 
where $\varepsilon_T(n,\overline{\mu})$ converges uniformly (over all $\overline{\mu} \geq 0$) to zero, as $n\to\infty$. Combining this with Equation~\eqref{eq:aux_opt} and taking the limit as $n\to\infty$, we obtain
\begin{align}
    \liminf\limits_{n\to\infty} \inf\limits_{\bmu^n \in \mathbb{R}_+^n } \left\{\lambda g\left( \frac{1}{\E[ S^{n}(\infty) ]} \right) + \frac{1}{n} \sum_{v \in [n]} h(\mu_v^{n}) \right\} &\geq \liminf\limits_{n\to\infty} \inf\limits_{\overline{\mu}\geq 0} \Big\{ \lambda g(\overline{\mu}) - \lambda\varepsilon_T(n,\overline{\mu}) + h(\overline{\mu}) \Big\} \\
    &= \inf\limits_{\overline{\mu}\geq 0} \Big\{ \lambda g(\overline{\mu}) + h(\overline{\mu}) \Big\},
\end{align}
where the optimization problem in the right-hand side is now convex, and its infimum is attained at $\mu^*$. Finally, Assumption \ref{ass:opt} (v) implies that $\mu^* > \lambda$.

\section*{Acknowledgements}
The work was partially supported by the NSF grant CIF-2113027.

\def\UrlBreaks{\do\/\do-}
\bibliographystyle{apalike}
\bibliography{main,references-debankur}

\appendix
\renewcommand{\thesection}{\Alph{section}}

\section{Proofs of auxiliary results for Theorem \ref{thm:idle}}
\label{sec:supercrit_bound}
\label{sec:supercrit_idle}
\label{sec:subcrit_transient_limit}
\label{sec:stoch_dom}
\label{sec:subcrit_mixing_time}
\label{sec:subcrit_concentrate}
\label{sec:subcrit_bound}
\label{sec:subcrit_idle}
\label{sec:barphi_bounded}
\begin{proof}[Proof of Lemma \ref{thm:supercrit_bound}]
Note that the evolution of the queue length process can be written as
\begin{equation}
\begin{multlined}
    \sum_{v \in [n]} X_v(t)
    = N_a\left( \int_0^t \lambda n \mathbbm{1}_{\left\{ \sum_{v \in [n]} X_v(s) < n \right\}} \diff s \right) - N_d\left( \int_0^t \sum_{v \in [n]} \mu_v X_v(s) \diff s \right),
\end{multlined}
\end{equation}
where $N_a$ and $N_d$ are independent unit-rate Poisson processes. Let $k := \lfloor (1 - 2 \varepsilon) n \rfloor$ and $Y(t)$ be a Markov process defined as
\begin{equation}
\begin{multlined}
    Y(t) = N_a'\left( \int_0^t \lambda n \mathbbm{1}_{\left\{ Y(s) < k \right\}} \diff s \right) - N_d'\left( \int_0^t \left( \sum_{v \in [n]} \mu_v - 2 \mu_- \varepsilon n \right) \mathbbm{1}_{\left\{ Y(s) > 0 \right\}} \diff s \right),
\end{multlined}
\end{equation}
where $N_a'$ and $N_d'$ are independent unit-rate Poisson processes. Note that if $\sum_{v \in [n]} X_v(t) \leq k$ then $\sum_{v \in [n]} \mu_v X_v(t) = \sum_{v \in [n]} \mu_v - \sum_{v \in [n]} \mu_v (1 - X_v(t)) \leq \sum_{v \in [n]} \mu_v - 2 \mu_- \varepsilon n$ and hence it is not hard to verify that stochastically $Y(t) \leq \sum_{v \in [n]} X_v(t)$ for all $t \geq 0$, given that the inequality holds at $t=0$. 
Moreover, $Y(t)$ is a simple birth-death process and its steady state satisfies
\begin{equation}
    \P\left( Y(\infty) = i \right) = \frac{\rho^i (\rho - 1)}{\rho^{k+1} - 1},\quad i = 0,\ldots, k,
\end{equation}
where
\[ \rho := \frac{\lambda n}{\sum_{v \in [n]} \mu_v - 2 \mu_- \varepsilon n} \geq \frac{\lambda}{\lambda - \mu_- \varepsilon} > 1. \]
Let $l := \lfloor (1 - 3 \varepsilon) n \rfloor$. Then, leveraging the above stochastic ordering, we get
\begin{equation}
\begin{multlined}
    \P\left( \sum_{v \in [n]} X_v(\infty) \leq l \right)
    \leq \P\left( Y(\infty) \leq l \right)
    = \sum_{i = 0}^l \frac{\rho^i (\rho - 1)}{\rho^{k+1} - 1}   = \frac{\rho^{l+1} - 1}{\rho^{k+1} - 1}
    \leq \frac{1}{\rho^{k - l}} \leq \left( 1 - \frac{\mu_- \varepsilon}{\lambda} \right)^{\varepsilon n}.
\end{multlined}
\end{equation}
\end{proof}
To prove Lemma \ref{thm:supercrit_idle}, we will need the following technical result:
\begin{lemma}
\label{lem:lyapunov_eq}
Fix any $v \in [n]$. Then,
\begin{equation}
\begin{aligned}
    \E\left[ \frac{\lambda n (1 - X_v(\infty))}{\sum_{v' \in [n]} (1 - X_{v'}(\infty))} \right] &= \E\left[ \mu_v X_v(\infty) \right], \\
    \E\left[ \mu_v X_v(\infty) I_v(\infty) \right] &= \E\left[ 1 - X_v(\infty) \right], \\
    \E\left[ \frac{\lambda n (1 - X_v(\infty)) I_v(\infty)}{\sum_{v' \in [n]} (1 - X_{v'}(\infty))} \right] &= \E\left[ 1 - X_v(\infty) \right].
\end{aligned}
\end{equation}
\end{lemma}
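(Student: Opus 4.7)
The plan is to derive all three identities from the stationary generator equation $\E[\mathcal{L}f(\boldsymbol{X}(\infty), \boldsymbol{I}(\infty))] = 0$ for three carefully chosen test functions, where $\mathcal{L}$ is the infinitesimal generator of the joint process $(\boldsymbol{X}(t), \boldsymbol{I}(t))$ whose action on test functions has already been written down coordinatewise in the decomposition used in the proof of Theorem~\ref{thm:process_mconv} (see Equation~\eqref{eq:process_mconv_1}). Before applying it, I would briefly justify that all the expectations appearing below are finite: the same argument as in Lemma~\ref{lem:rel_compact} gives $\P\bigl(I_v(\infty) > K\bigr) \leq e^{-\lambda K}$ (since an idle server receives tasks at rate at least $\lambda$), so $\E[I_v(\infty)] \leq 1/\lambda$ and Dynkin's formula applies to the unbounded test function $i_v$.

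For the first identity, take $f(\boldsymbol{x}, \boldsymbol{i}) = x_v$. Because $f$ does not depend on $\boldsymbol{i}$, the drift term vanishes; the only nonzero jump contributions are from arrivals to $v$ (change $+1$ at rate $\lambda n(1-x_v)/\sum_{v'}(1-x_{v'})$) and from completions at $v$ (change $-1$ at rate $\mu_v x_v$). Thus $\mathcal{L}f(\boldsymbol{x}, \boldsymbol{i}) = \lambda n(1-x_v)/\sum_{v'}(1-x_{v'}) - \mu_v x_v$, and $\E[\mathcal{L}f(\boldsymbol{X}(\infty),\boldsymbol{I}(\infty))] = 0$ gives the first identity. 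This is just the stationary rate balance for server $v$ between the idle and busy states.

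For the second identity, take $f(\boldsymbol{x}, \boldsymbol{i}) = i_v$. Here the drift contribution is exactly $1 - x_v$, reflecting the fact that $I_v$ grows at rate $1$ while server $v$ is idle and is constant while it is busy. Arrivals do not change $I_v$ (only $X_v$ flips), while a completion at $v$ resets $I_v$ to $0$, producing a change of $-i_v$ at rate $\mu_v x_v$. Therefore $\mathcal{L}f(\boldsymbol{x}, \boldsymbol{i}) = (1-x_v) - \mu_v x_v i_v$, and the stationary identity gives $\E[1 - X_v(\infty)] = \E[\mu_v X_v(\infty) I_v(\infty)]$, which is the second identity.

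For the third identity, take $f(\boldsymbol{x}, \boldsymbol{i}) = x_v i_v$. Since $x_v \in \{0,1\}$, the drift contribution is $x_v(1-x_v) \equiv 0$. An arrival at $v$ (possible only when $x_v = 0$) changes $f$ from $0$ to $i_v$, at rate $\lambda n(1-x_v)/\sum_{v'}(1-x_{v'})$; a completion at $v$ (possible only when $x_v=1$) takes $f$ from $i_v$ to $0$ at rate $\mu_v x_v$. Thus $\mathcal{L}f(\boldsymbol{x}, \boldsymbol{i}) = \lambda n (1-x_v) i_v / \sum_{v'}(1-x_{v'}) - \mu_v x_v i_v$, and combining $\E[\mathcal{L}f] = 0$ with the second identity yields the third. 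The only non-routine point in the whole argument is justifying Dynkin's formula for the unbounded test functions $i_v$ and $x_v i_v$, which is why the exponential tail bound on $I_v(\infty)$ needs to be stated first; everything else is a direct reading off of the generator.
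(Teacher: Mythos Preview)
Your proposal is correct and follows essentially the same approach as the paper: both derive the three identities from the stationary generator equation $\E[\mathcal{L}f]=0$ (the paper phrases it as $\frac{\diff}{\diff t}\E[f(\boldsymbol{X}(t),\boldsymbol{I}(t))]=0$ when started in stationarity) applied to three test functions. The only cosmetic difference is the third test function: the paper uses $f=i_v(1-x_v)$, which yields the third identity directly, while you use $f=x_v i_v$ and then combine with the second identity; both routes are fine. Your explicit justification of finite moments via the exponential tail bound on $I_v(\infty)$ is a nice addition that the paper leaves implicit.
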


\begin{proof}
Consider a system initiated at the steady state, i.e., $\boldsymbol{X}(0) = \boldsymbol{X}(\infty)$. Then,
\begin{equation}
    \frac{\diff}{\diff t} \E\left[ X_v(t) \right]
    = \E\left[ \frac{\lambda n (1 - X_v(t))}{\sum_{v' \in [n]} (1 - X_{v'}(t))} - \mu_v X_v(t) \right]
    = 0,
\end{equation}
and
\begin{equation}
    \frac{\diff}{\diff t} \E\left[ I_v(t) \right]
    = \E\left[ 1 - X_v(t) - \mu_v X_v(t) I_v(t) \right]
    = 0,
\end{equation}
and
\begin{equation}
    \frac{\diff}{\diff t} \E\left[ I_v(t) (1 - X_v(t)) \right]
    = \E\left[ 1 - X_v(t) - \frac{\lambda n (1 - X_v(t)) I_v(t)}{\sum_{v' \in [n]} (1 - X_{v'}(t))} \right]
    = 0,
\end{equation}
which show the three equalities in the lemma, respectively.
\end{proof}

\begin{proof}[Proof of Lemma \ref{thm:supercrit_idle}]
Let $E := \big\{ \sum_{v \in [n]} (1 - X_v(\infty)) \leq 3 \varepsilon n \big\}$. Then,
\begin{equation}\label{eq:supercrit_idle_1}
\begin{multlined}
    \E\left[ (1 - X_v(\infty)) I_v(\infty) \right]    
    = \E\left[ \frac{1}{\lambda} \sum_{v' \in [n]} (1 - X_{v'}(\infty)) \frac{\lambda (1 - X_v(\infty)) I_v(\infty)}{\sum_{v' \in [n]} (1 - X_{v'}(\infty))} \mathbbm{1}_E \right] \\  + \E\left[ (1 - X_v(\infty)) I_v(\infty) \mathbbm{1}_{E^c} \right]
    \leq \E\left[ \frac{3 \varepsilon}{\lambda} \frac{\lambda n (1 - X_v(\infty)) I_v(\infty)}{\sum_{v' \in [n]} (1 - X_{v'}(\infty))} \right]
    + \sqrt{\E\left[ I_v(\infty)^2 \right] \P(E^c)}  \\
    \leq \frac{3 \varepsilon}{\lambda}
    + \frac{\sqrt{2}}{\lambda} \left( 1 - \frac{\mu_- \varepsilon}{\lambda} \right)^{\varepsilon n / 2},
\end{multlined}
\end{equation}
where the first inequality follows by Cauchy-Schwartz and the second inequality follows by Lemma \ref{lem:lyapunov_eq}, the fact that $I_v(\infty)$ is stochastically dominated by an $\text{Exp}(\lambda)$ random variable and Lemma~\ref{thm:supercrit_bound}. Also,
\begin{equation}
\begin{multlined}
\label{eq:supercrit_idle_2}
    \E\left[ X_v(\infty) I_v(\infty) \right]
    = \E\left[ \frac{1 - X_v(\infty)}{\mu_v} \right]
    \leq \frac{3 \varepsilon}{\lambda}
    + \frac{1}{\mu_-} \left( 1 - \frac{\mu_- \varepsilon}{\lambda} \right)^{\varepsilon n / 2},
\end{multlined}
\end{equation}
where the equality follows from Lemma \ref{lem:lyapunov_eq} and the inequality follows along similar lines as Equation \eqref{eq:supercrit_idle_1}. Therefore, the proof follows by adding equations \eqref{eq:supercrit_idle_1} and \eqref{eq:supercrit_idle_2}.
\end{proof}

The following algebraic lemma is used in the proof of Lemma \ref{thm:subcrit_transient_limit}.
\begin{lemma} \label{lem:inv_lipschitz}
Let $0 \leq y_1 \leq x_1$ and $0 \leq y_2 \leq x_2$. Then,
\begin{equation}
    \left\lvert \frac{y_1}{x_1} - \frac{y_2}{x_2} \right\rvert
    \leq \frac{1}{\min\{ x_1, x_2 \}} \left( \left\lvert x_1 - x_2 \right\rvert + \left\lvert y_1 - y_2 \right\rvert \right).
\end{equation}
\end{lemma}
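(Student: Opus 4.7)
The plan is to bring both fractions to a common denominator and then use a telescoping trick in the numerator. Writing
\begin{equation*}
\frac{y_1}{x_1} - \frac{y_2}{x_2} = \frac{y_1 x_2 - y_2 x_1}{x_1 x_2},
\end{equation*}
I would add and subtract $y_2 x_2$ in the numerator to obtain the algebraic identity $y_1 x_2 - y_2 x_1 = x_2 (y_1 - y_2) + y_2 (x_2 - x_1)$. Taking absolute values and applying the triangle inequality then gives
\begin{equation*}
\left\lvert \frac{y_1}{x_1} - \frac{y_2}{x_2} \right\rvert \leq \frac{|y_1 - y_2|}{x_1} + \frac{y_2}{x_1 x_2} \, |x_1 - x_2|.
\end{equation*}

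The next step is to invoke the hypothesis $y_2 \leq x_2$ to bound $y_2/(x_1 x_2) \leq 1/x_1$, which collapses the right-hand side to $\bigl(|y_1 - y_2| + |x_1 - x_2|\bigr)/x_1$. The same argument applied to the alternative decomposition $y_1 x_2 - y_2 x_1 = x_1 (y_1 - y_2) + y_1 (x_2 - x_1)$, combined with the hypothesis $y_1 \leq x_1$, yields the analogous bound with $x_2$ in the denominator. Since both bounds hold simultaneously, I can take the smaller one, and since $1/\max\{x_1,x_2\} \leq 1/\min\{x_1,x_2\}$, the claimed inequality follows.

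There is no real obstacle here; the proof is a short algebraic manipulation. The only subtle point is choosing which term to add and subtract so that the factor which ends up multiplying $|x_1 - x_2|$ (namely $y_1$ or $y_2$) is exactly the one that can be cancelled against a factor in the denominator using the hypothesis $y_i \leq x_i$. This is what allows the final bound to be stated purely in terms of the $x_i$'s, without any explicit dependence on $y_1, y_2$ outside of their differences.
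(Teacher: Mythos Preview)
Your proof is correct. The approach is close in spirit to the paper's but with minor differences worth noting. The paper assumes without loss of generality that $x_1 \leq x_2$, telescopes through the intermediate term $y_1/x_2$, and invokes the mean value theorem to bound $\lvert 1/x_1 - 1/x_2 \rvert \leq \lvert x_1 - x_2\rvert / x_1^2$; this yields only the single bound with $1/x_1 = 1/\min\{x_1,x_2\}$ in the denominator. Your common-denominator decomposition avoids the mean value theorem entirely and, by running both telescoping variants, actually produces the slightly sharper bound with $1/\max\{x_1,x_2\}$ before you relax it to the stated inequality. So your route is marginally more elementary and marginally stronger, though both arguments are short algebraic manipulations of the same kind.
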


\begin{proof}
Let $x_1 \leq x_2$ without loss of generality. Then, by the mean value theorem,
    $\frac{1}{x_1} - \frac{1}{x_2} = - \frac{x_1 - x_2}{\xi^2},$
where $\xi \in [x_1, x_2]$. Therefore,
\begin{equation}
\begin{multlined}
    \left\lvert \frac{y_1}{x_1} - \frac{y_2}{x_2} \right\rvert
    \leq \left\lvert \frac{y_1}{x_1} - \frac{y_1}{x_2} \right\rvert + \left\lvert \frac{y_1}{x_2} - \frac{y_2}{x_2} \right\rvert  \leq \frac{y_1}{x_1^2} \left\lvert x_1 - x_2 \right\rvert + \frac{1}{x_2} \left\lvert y_1 - y_2 \right\rvert
    \leq \frac{1}{x_1} \left( \left\lvert x_1 - x_2 \right\rvert + \left\lvert y_1 - y_2 \right\rvert \right),
\end{multlined}
\end{equation}
which completes the proof.
\end{proof}

\begin{proof}[Proof of Lemma \ref{thm:subcrit_transient_limit}]
Let $f: \R_+ \to [0, 1]$ be any function. The queue length process evolves as
\begin{equation}
\begin{multlined}
    \frac{1}{n} \sum_{v \in [n]} f(\mu_v) X_v(t)
    = \frac{1}{n} \sum_{v \in [n]} f(\mu_v) X_v(0)  + \frac{1}{n} \sum_{v \in [n]} f(\mu_v) N_v^{(1)}\left( \int\limits_0^t \frac{\lambda n (1 - X_v(s))}{\sum\limits_{v' \in [n]} (1 - X_{v'}(s))} \diff s \right) \\ 
    - \frac{1}{n} \sum_{v \in [n]} f(\mu_v) N_v^{(2)}\left( \int\limits_0^t \mu_v X_v(s) \diff s \right), \label{eq:queue_evolution}
\end{multlined}
\end{equation}
where $N_v^{(1)}$ and $N_v^{(2)}$ are independent unit-rate Poisson processes for $v \in [n]$. We rewrite the second and third terms on the right-hand side of Equation \eqref{eq:queue_evolution} to get
\begin{equation}
\begin{multlined}
\label{eq:martingale_decomposition}
    \frac{1}{n} \sum_{v \in [n]} f(\mu_v) X_v(t)
    = \frac{1}{n} \sum_{v \in [n]} f(\mu_v) X_v(0) + \frac{1}{n} \sum_{v \in [n]} f(\mu_v) \int\limits_0^t \frac{\lambda n (1 - X_v(s))}{\sum_{v' \in [n]} (1 - X_{v'}(s))} \diff s + M_f^{(1)}(t) \\
    - \frac{1}{n} \sum_{v \in [n]} f(\mu_v) \int\limits_0^t \mu_v X_v(s) \diff s - M_f^{(2)}(t),
\end{multlined}
\end{equation}
where
\begin{equation}
\begin{aligned}
    M_f^{(1)}(t) &:= \frac{1}{n} \sum_{v \in [n]} f(\mu_v) M_v^{(1)}\left( \int\limits_0^t \frac{\lambda n (1 - X_v(s))}{\sum_{v' \in [n]} (1 - X_{v'}(s))} \diff s \right), \\
    M_f^{(2)}(t) &:= \frac{1}{n} \sum_{v \in [n]} f(\mu_v) M_v^{(2)}\left( \int\limits_0^t \mu_v X_v(s) \diff s \right),
\end{aligned}
\end{equation}
and $M_v^{(i)}(t) := N_v^{(i)}(t) - t$ for $i = 1, 2$ and $v \in [n]$. Let $\mathcal{F}_t$ be the natural filtration for $\boldsymbol{X}(t)$. Then, it is easy to check that $M_f^{(1)}(t)$ and $M_f^{(2)}(t)$ are square-integrable martingales with respect to $\mathcal{F}_t$. Moreover, we have
\begin{equation}
\begin{multlined}
\label{eq:quadvar_1}
    \E\left[ \left\langle M_f^{(1)}, M_f^{(1)} \right\rangle(t) \right]    = \frac{1}{n^2} \sum_{v \in [n]} f(\mu_v)^2 \E\left[ \left\langle N_v^{(1)}, N_v^{(1)} \right\rangle\left( \int\limits_0^t \mu_v X_v(s) \diff s \right) \right] \\
    = \frac{1}{n^2} \sum_{v \in [n]} f(\mu_v)^2 \E\left[ \int\limits_0^t \mu_v X_v(s) \diff s \right]
    \leq \frac{\mu_+ t}{n},
\end{multlined}
\end{equation}
and
\begin{equation}
\begin{multlined}
\label{eq:quadvar_2}
    \E\left[ \left\langle M_f^{(2)}, M_f^{(2)} \right\rangle(t) \right]   = \frac{1}{n^2} \sum_{v \in [n]} f(\mu_v)^2 \E\left[ \left\langle N_v^{(2)}, N_v^{(2)} \right\rangle\left( \int\limits_0^t \frac{\lambda n (1 - X_v(s))}{\sum_{v' \in [n]} (1 - X_{v'}(s))} \diff s \right) \right] \\
    = \frac{1}{n^2} \sum_{v \in [n]} f(\mu_v)^2 \E\left[ \int\limits_0^t \frac{\lambda n (1 - X_v(s))}{\sum_{v' \in [n]} (1 - X_{v'}(s))} \diff s \right]
    \leq \frac{\lambda t}{n}.
\end{multlined}
\end{equation}
Now, we claim the following:
\begin{claim}\label{claim:int-phi0}
For all $t \geq 0$, we have $\int_0^\infty \bar{\phi}_t(\diff x) \leq 1 - \delta / \mu_+$. 
\end{claim}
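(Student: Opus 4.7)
\textbf{Proof proposal for Claim \ref{claim:int-phi0}.}

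My plan is to derive a scalar ODE for $q(t) := \int_0^\infty \bar{\phi}_t(\diff x)$ by plugging $f \equiv 1$ into the defining equation \eqref{eq:phi-t}, exploit the fact that $\bar{\phi}_t$ is a combination of point masses at the service rates $\mu_v$ with per-atom mass bounded by $1/n$, and then close the argument with a one-sided comparison at the threshold $1 - \delta/\mu_+$.

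First, taking $f \equiv 1$ in \eqref{eq:phi-t} and using $\int \Phi(\diff x) = 1$, the driving term $\lambda(\int f \diff \Phi - \int f \diff \bar\phi_s)/(1 - \int \bar\phi_s)$ collapses to $\lambda$, so
\begin{equation}
    q'(t) = \lambda - \int_0^\infty x \, \bar{\phi}_t(\diff x).
\end{equation}
Next, as the preceding remark indicates, $\bar{\phi}_t = \sum_{v\in[n]} p_v(t) \delta_{\mu_v}$ for some nonnegative weights $p_v(t)$; plugging indicator-type test functions (or, more carefully, smooth approximations peaked at a given $\mu_v$, with grouping in case of ties) into \eqref{eq:phi-t} yields the coordinate ODE
\begin{equation}
    p_v'(t) = \frac{\lambda\bigl(1/n - p_v(t)\bigr)}{1 - q(t)} - \mu_v p_v(t),
\end{equation}
with $p_v(0) = X_v(0)/n \in \{0, 1/n\}$. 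A standard boundary check shows $p_v(t) \in [0, 1/n]$ for all $t \geq 0$: at $p_v = 0$ the derivative is nonnegative, and at $p_v = 1/n$ it equals $-\mu_v/n \leq 0$.

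With $r_v(t) := 1/n - p_v(t) \in [0, 1/n]$ and $\sum_v r_v(t) = 1 - q(t)$, I can rewrite
\begin{equation}
    \int_0^\infty x \, \bar{\phi}_t(\diff x) = \frac{1}{n}\sum_{v\in[n]} \mu_v - \sum_{v\in[n]} \mu_v r_v(t) = (\lambda + \delta) - \sum_{v\in[n]} \mu_v r_v(t),
\end{equation}
using the definition of $\delta$ in \eqref{eq:delta}. Bounding $\sum_v \mu_v r_v(t) \leq \mu_+ \sum_v r_v(t) = \mu_+(1 - q(t))$ then gives
\begin{equation}
    q'(t) = -\delta + \sum_{v\in[n]} \mu_v r_v(t) \leq -\delta + \mu_+\bigl(1 - q(t)\bigr).
\end{equation}

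Finally, the right-hand side vanishes precisely at $q(t) = 1 - \delta/\mu_+$, so this level set is an upper barrier for the ODE: since $q(0) \leq 1 - \delta/\mu_+$ by hypothesis, a direct comparison argument (or equivalently Gr\"{o}nwall's inequality applied to $1 - q(t) - \delta/\mu_+$) yields $q(t) \leq 1 - \delta/\mu_+$ for all $t \geq 0$, proving the claim. The only slightly delicate point is justifying the point-mass decomposition and the per-atom bound $p_v(t) \leq 1/n$; once this structural fact is in hand the rest is a short Gr\"{o}nwall-style step, so I expect no real obstacle.
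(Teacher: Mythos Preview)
Your proof is correct and follows essentially the same route as the paper: both compute $q'(t)=\lambda-\int x\,\bar\phi_t(\diff x)$, rewrite this as $-\delta+\sum_v\mu_v(1/n-p_v(t))$, bound it by $-\delta+\mu_+(1-q(t))$, and conclude by a barrier argument at the level $1-\delta/\mu_+$. The paper's version is terser and leaves the per-atom bound $p_v(t)\le 1/n$ implicit (it is needed for the inequality $\int x\,\Phi(\diff x)-\int x\,\bar\phi_t(\diff x)\le\mu_+(1-\int\bar\phi_t(\diff x))$), whereas you supply an explicit boundary check on the coordinate ODE; so if anything your write-up is slightly more careful on exactly the point you flagged as delicate.
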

\begin{claimproof}
Note that by definition, $\int_0^\infty \bar{\phi}_t(\diff x)$ is differentiable as a function of $t$. Then, if $t$ is any time such that $\int_0^\infty \bar{\phi}_t(\diff x) = 1 - \delta / \mu_+$, we have
\begin{equation}
\begin{multlined}
    \frac{\diff}{\diff t} \int_0^\infty \bar{\phi}_t(\diff x)
    = \frac{\lambda \left( \int_0^\infty \Phi(\diff x) - \int_0^\infty \bar{\phi}_t(\diff x) \right)}{1 - \int_0^\infty \bar{\phi}_t(\diff x)} - \int_0^\infty x \bar{\phi}_t(\diff x) \\
    = \left( \lambda - \int_0^\infty x \Phi(\diff x) \right) + \left( \int_0^\infty x \Phi(\diff x) - \int_0^\infty x \bar{\phi}_t(\diff x) \right) 
    \leq -\delta + \mu_+ \left( 1 - \int_0^\infty \bar{\phi}_t(\diff x) \right)
    = 0.
\end{multlined}
\end{equation}
Thus, the claim follows.    
\end{claimproof}
Next, let us define
    $d_f(t) := \left\lvert \frac{1}{n} \sum_{v \in [n]} f(\mu_v) X_v(t)- \int_0^\infty f(x) \bar{\phi}_t(\diff x) \right\rvert$.
Then, Lemma \ref{lem:inv_lipschitz}, Equation \eqref{eq:martingale_decomposition}, and Claim~\ref{claim:int-phi0} imply that 
\begin{equation}
\begin{split}
    d_f(t)
    &\leq \int\limits_0^t \lambda \left\lvert \frac{\frac{1}{n} \sum_{v \in [n]} f(\mu_v) (1 - X_v(s))}{\frac{1}{n} \sum_{v \in [n]} (1 - X_v(s))} - \frac{\int_0^\infty f(x) \Phi(\diff x) - \int_0^\infty f(x) \bar{\phi}_s(\diff x)}{1 - \int_0^\infty \bar{\phi}_s(\diff x)} \right\rvert \diff s \\
    &\hspace{3cm}+ \int\limits_0^t \left\lvert \frac{1}{n} \sum_{v \in [n]} \mu_v f(\mu_v) X_v(s) - \int\limits_0^\infty x f(x) \bar{\phi}_s(\diff x) \right\rvert \diff s + \left\lvert M_f^{(1)}(t) \right\rvert + \left\lvert M_f^{(2)}(t) \right\rvert \\
    &\leq \int\limits_0^t \lambda \left( \frac{2 \mu_+ (d_{\boldsymbol{1}}(s) + d_f(s))}{\delta} + \mathbbm{1}_{\left\{ \frac{1}{n} \sum_{v \in [n]} (1 - X_v(s)) < \frac{\delta}{2 \mu_+} \right\}} \right) \diff s     + \int\limits_0^t \mu_+ d_g(s) \diff s \\ &\hspace{3cm} + \left\lvert M_f^{(1)}(t) \right\rvert + \left\lvert M_f^{(2)}(t) \right\rvert \\
    &\leq \int\limits_0^t \lambda \left( \frac{2 \mu_+ (d_{\boldsymbol{1}}(s) + d_f(s))}{\delta} + \mathbbm{1}_{\left\{ d_{\boldsymbol{1}}(s) > \frac{\delta}{2 \mu_+} \right\}} \right) + \mu_+ d_g(s) \diff s + \left\lvert M_f^{(1)}(t) \right\rvert + \left\lvert M_f^{(2)}(t) \right\rvert \\
    &\leq \int\limits_0^t \lambda \left( \frac{2 \mu_+ (2 d_{\boldsymbol{1}}(s) + d_f(s))}{\delta} \right) + \mu_+ d_g(s) \diff s + \left\lvert M_f^{(1)}(t) \right\rvert + \left\lvert M_f^{(2)}(t) \right\rvert,
\end{split}
\end{equation}
where $g(x) := x f(x) / \mu_+$ for $x \in \R_+$. Then,
\begin{equation}\label{eq:dft-bdd}
\begin{multlined}
    \sup_{f: \R_+ \to [0, 1]} \E\left[ \sup_{t \in [0, T]} d_f(t) \right]
    \leq \int\limits_0^T \left( \frac{6 \lambda \mu_+ }{\delta} + \mu_+ \right) \sup_{f: \R_+ \to [0, 1]} \E\left[ \sup_{s \in [0, t]} d_f(s) \right] \diff t \\ 
    + \sup_{f: \R_+ \to [0, 1]} \E\left[ \sup_{t \in [0, T]} \left\lvert M_f^{(1)}(t) \right\rvert \right] + \sup_{f: \R_+ \to [0, 1]} \E\left[ \sup_{t \in [0, T]} \left\lvert M_f^{(2)}(t) \right\rvert \right].
\end{multlined}
\end{equation}
Note that, by Doob's maximal inequality and equations \eqref{eq:quadvar_1} and \eqref{eq:quadvar_2}, we have
\begin{equation}\label{eq:m1f-bdd}
    \E\left[ \sup_{t \in [0, T]} \left\lvert M_f^{(1)}(t) \right\rvert \right]
    \leq \sqrt{\E\left[ \sup_{t \in [0, T]} M_f^{(1)}(t)^2 \right]}
    \leq \sqrt{4 \E\left[ M_f^{(1)}(T)^2 \right]}
    \leq \sqrt{\frac{4 \mu_+ T}{n}},
\end{equation}
and
\begin{equation}\label{eq:m2f-bdd}
    \E\left[ \sup_{t \in [0, T]} \left\lvert M_f^{(2)}(t) \right\rvert \right]
    \leq \sqrt{\E\left[ \sup_{t \in [0, T]} M_f^{(2)}(t)^2 \right]}
    \leq \sqrt{4 \E\left[ M_f^{(2)}(T)^2 \right]}
    \leq \sqrt{\frac{4 \lambda T}{n}}.
\end{equation}
Therefore, combining equations \eqref{eq:dft-bdd}, \eqref{eq:m1f-bdd} and \eqref{eq:m2f-bdd}, and then using Gr\"{o}nwall's inequality completes the proof of Lemma \ref{thm:subcrit_transient_limit}.
\end{proof}


\begin{proof}[Proof of Lemma \ref{lem:stoch_dom}]
We will construct a joint probability space such that the arrival and potential departure epochs are coupled in the two systems. Then we will prove the statement by induction on the coupled arrival and departure epochs.

Let $t \geq 0$ be an arrival epoch and assume that $\boldsymbol{X}^{(1)}(t-) \leq \boldsymbol{X}^{(2)}(t-)$ before the arrival. 
Define $S^{(i)} := \{ v \in [n] : X_v^{(i)}(t-) = 0 \}$ to be the set of idle servers in system $i$ for $i = 1, 2$ and note that $S^{(2)} \subseteq S^{(1)}$ by the induction hypothesis. If $S^{(1)} = \varnothing$ or $S^{(2)} = \varnothing$, then $\boldsymbol{X}^{(1)}(t) \leq \boldsymbol{X}^{(2)}(t)$ is trivially maintained. 
Otherwise, we assign the idle servers in $S^{(2)}$ an index from $1$ to $\lvert S^{(2)} \rvert$ and we assign the idle servers in $S^{(1)} \setminus S^{(2)}$ an index from $\lvert S^{(2)} \rvert + 1$ to $\lvert S^{(1)} \rvert$ (if any). 
Let $U$ be a uniform $[0, 1]$ random variable, independent across time epochs and shared between the two systems. 
Now the task assignment decision (which depends on $U$ in that epoch) is taken in the two systems as follows:
\begin{itemize}
    \item In system 1, we assign the task to the $j$-th idle server for $j = 1, \dots, \lvert S^{(1)} \rvert$ if and only if
    $U \in \left[ \frac{j-1}{\lvert S^{(1)} \rvert}, \frac{j}{\lvert S^{(1)} \rvert} \right).$

    \item In system 2, we assign the task to the $j$-th idle server for $j = 1, \dots, \lvert S^{(2)} \rvert$ if and only if
\begin{equation}
    U \in \left[ \frac{j-1}{\lvert S^{(1)} \rvert}, \frac{j}{\lvert S^{(1)} \rvert} \right) \cup \left[ \frac{\lvert S^{(2)} \rvert}{\lvert S^{(1)} \rvert} + (j-1) \left( \frac{1}{\lvert S^{(2)} \rvert} - \frac{1}{\lvert S^{(1)} \rvert} \right), \frac{\lvert S^{(2)} \rvert}{\lvert S^{(1)} \rvert} + j \left( \frac{1}{\lvert S^{(2)} \rvert} - \frac{1}{\lvert S^{(1)} \rvert} \right) \right).
\end{equation}
\end{itemize}
Note that the probability that any idle server is picked equals $1 / \lvert S^{(1)} \rvert$ in system 1 and $1 / \lvert S^{(2)} \rvert$ in system 2 as required. If $U < \lvert S^{(2)} \rvert / \lvert S^{(1)} \rvert$, then the task is routed to the same server in both systems and hence $\boldsymbol{X}^{(1)}(t) \leq \boldsymbol{X}^{(2)}(t)$ is trivially maintained. 
Now, if $U \geq \lvert S^{(2)} \rvert / \lvert S^{(1)} \rvert$, then the task is routed to a server $v \in S^{(1)} \setminus S^{(2)}$ in system 1. This means that server $v$ is already busy in system 2 and hence $\boldsymbol{X}^{(1)}(t) \leq \boldsymbol{X}^{(2)}(t)$ is maintained. The arrival in system 2 only increases the queue lengths in system 2 and hence does not invalidate the inequality.

We also synchronize the potential departure epochs in server $v$ in the two systems for all $v\in [n]$. Let $t \geq 0$ be such a potential departure epoch at server $v$ and assume that $\boldsymbol{X}^{(1)}(t-) \leq \boldsymbol{X}^{(2)}(t-)$ before the departure. After the departure, $X_v^{(1)}(t) = X_v^{(2)}(t) = 0$ and hence the inequality $\boldsymbol{X}^{(1)}(t-) \leq \boldsymbol{X}^{(2)}(t-)$ is trivially maintained.
\end{proof}


\begin{proof}[Proof of Lemma \ref{thm:subcrit_mixing_time}]
Note that the occupancy processes in the two systems evolve as
\begin{equation}
\begin{multlined}
    \sum_{v \in [n]} X_v^{(i)}(t)
    = \sum_{v \in [n]} X_v^{(i)}(0)  + N_a^{(i)}\left( \int\limits_0^t \lambda n \mathbbm{1}_{\left\{ \sum_{v \in [n]} X_v^{(i)}(s) < n \right\}} \diff s \right)   - N_d^{(i)}\left( \int\limits_0^t \sum_{v \in [n]} \mu_v X_v^{(i)}(s) \diff s \right),
\end{multlined}
\end{equation}
where $N_a^{(i)}$ and $N_d^{(i)}$ are independent unit-rate Poisson processes for $i = 1, 2$. Therefore, using Fubini's theorem,
\begin{equation}
\begin{multlined}
    \E\left[ \sum_{v \in [n]} X_v^{(i)}(t) \right]
    = \E\left[ \sum_{v \in [n]} X_v^{(i)}(0) \right]   + \int\limits_0^t \lambda n \P\left( \sum_{v \in [n]} X_v^{(i)}(s) < n \right) \diff s   - \int\limits_0^t \E\left[ \sum_{v \in [n]} \mu_v X_v^{(i)}(s) \right] \diff s,
\end{multlined}
\end{equation}
for $i = 1, 2$. We let the two processes be defined on the joint probability space defined in Lemma \ref{lem:stoch_dom}. This implies that almost surely, $\boldsymbol{X}^{(1)}(t) \leq \boldsymbol{X}^{(2)}(t)$ for all $t \geq 0$ and hence, 
\[ \P\left( \sum_{v \in [n]} X_v^{(2)}(t) < n \right) \leq \P\left( \sum_{v \in [n]} X_v^{(1)}(t) < n \right) \]
for all $t \geq 0$. Further, note that the coupling also ensures that
$\sum_{v \in [n]} \left( X_v^{(2)}(t) - X_v^{(1)}(t) \right) $
is nonincreasing and it decreases by 1 whenever there is a departure from system 2 but not from system 1.
Thus,
\begin{equation}
\begin{multlined}
    \E\left[ \sum_{v \in [n]} \left( X_v^{(2)}(t) - X_v^{(1)}(t) \right) \right]  \leq \E\left[ \sum_{v \in [n]} \left( X_v^{(2)}(0) - X_v^{(1)}(0) \right) \right] \\
    - \int\limits_0^t \E\left[ \sum_{v \in [n]} \mu_v \left( X_v^{(2)}(s) - X_v^{(1)}(s) \right) \right] \diff s
    \leq n
    - \int\limits_0^t \mu_- \E\left[ \sum_{v \in [n]} \left( X_v^{(2)}(s) - X_v^{(1)}(s) \right) \right] \diff s.
\end{multlined}
\end{equation}
Therefore, we have
$\E\left[ \sum_{v \in [n]} \left( X_v^{(2)}(t) - X_v^{(1)}(t) \right) \right] \leq n \exp(-\mu_- t).$
Dividing by $n$, the result follows.
\end{proof}


\begin{proof}[Proof of Lemma \ref{thm:subcrit_concentrate}]
We consider two copies of the queue length process such that $\boldsymbol{X}^{(1)}(0) = 0$ and $\boldsymbol{X}^{(2)}(0) = \boldsymbol{X}^{(2)}(\infty)$ and defined on the joint probability space of Lemma~\ref{lem:stoch_dom}. Then, Lemma~\ref{thm:subcrit_mixing_time} implies that
$
    \E\left[ \frac{1}{n} \sum_{v \in [n]} \left\lvert X_v^{(2)}(t) - X_v^{(1)}(t) \right\rvert \right]
    \leq \exp\left( -\mu_- t \right).
$
Also, Lemma \ref{thm:subcrit_transient_limit} implies that
\begin{equation}
\begin{multlined}
    \E\left[ \left\lvert \frac{1}{n} \sum_{v \in [n]} X_v^{(1)}(t) - \int\limits_0^\infty \bar{\phi}_t(\diff x) \right\rvert \right]
    \leq \sqrt{\frac{8 (\mu_+ + \lambda) t}{n}} \exp\left( \left( \frac{6 \lambda \mu_+}{\delta} + \mu_+ \right) t \right) \\ \leq \sqrt{\frac{8 (\mu_+ + \lambda) t}{n}} \exp\left( \left( \frac{6 \lambda \mu_+}{\varepsilon \mu_-} + \mu_+ \right) t \right).
\end{multlined}
\end{equation}
Let $t = \log(n) / \alpha$. Then,
\begin{equation}
\begin{multlined}
    \E\left[ \left\lvert \frac{1}{n} \sum_{v \in [n]} X_v^{(2)}(t) - c \right\rvert \right]
    \leq \exp\left( -\mu_- t \right) + \sqrt{\frac{8 (\mu_+ + \lambda) t}{n}} \exp\left( \left( \frac{6 \lambda \mu_+}{\varepsilon \mu_-} + \mu_+ \right) t \right) \\  = \frac{1 + \sqrt{\frac{8 (\mu_+ + \lambda) \log(n)}{\alpha}}}{n^{\frac{\mu_-}{\alpha}}}.
\end{multlined}
\end{equation}
Since $\boldsymbol{X}^{(2)}(0) = \boldsymbol{X}^{(2)}(\infty)$, we have $\boldsymbol{X}^{(2)}(t) = \boldsymbol{X}^{(2)}(\infty)$, and the result follows.
\end{proof}

\begin{proof}[Proof of Lemma \ref{thm:subcrit_bound}]
As before, note that the occupancy process, starting from the empty state, evolves as
\begin{equation}
    \sum_{v \in [n]} X_v(t)
    = N_a\left( \int_0^t \lambda n \mathbbm{1}_{\left\{ \sum_{v \in [n]} X_v(s) < n \right\}} \diff s \right) - N_d\left( \int_0^t \sum_{v \in [n]} \mu_v X_v(s) \diff s \right)
\end{equation}
where $N_a$ and $N_d$ are independent unit-rate Poisson processes. Let $k := \lfloor (1 - 2 \varepsilon) n \rfloor$ and let $Y(t)$ be a Markov process defined as
\begin{equation}
\begin{multlined}
    Y(t) = N_a'\left( \int_0^t \lambda n \mathbbm{1}_{\left\{ Y(s) < n \right\}} \diff s \right) - N_d'\left( \int_0^t \left( \sum_{v \in [n]} \mu_v - 2 \mu_+ \varepsilon n \right) \mathbbm{1}_{\left\{ Y(s) > k \right\}} \diff s \right),
\end{multlined}
\end{equation}
where $N_a'$ and $N_d'$ are independent unit-rate Poisson processes. Note that if $\sum_{v \in [n]} X_v(t) > k$ then
\[ \sum_{v \in [n]} \mu_v X_v(t) = \sum_{v \in [n]} \mu_v - \sum_{v \in [n]} \mu_v (1 - X_v(t)) \geq \sum_{v \in [n]} \mu_v - 2 \mu_+ \varepsilon n \]
and hence standard coupling can be constructed so that almost surely, $Y(t) \geq \sum_{v \in [n]} X_v(t)$ for all $t \geq 0$, provided it is satisfied at $t=0$. 
Moreover, $Y(t)$ is a simple birth-death process and its steady state satisfies 
\begin{equation}
    \P\left( Y(\infty) = i \right) = \frac{\rho^i (1 - \rho)}{\rho^{n+1} - \rho^k},\qquad i = k, k+1, \ldots, n,
\end{equation}
where
\[ \rho := \lambda n \left( \sum_{v \in [n]} \mu_v - 2 \mu_+ \varepsilon n \right)^{-1} \leq \frac{\lambda}{\lambda + \mu_+ \varepsilon} < 1. \]
Let $l := \lfloor (1 - \varepsilon) n \rfloor$. Then,
\begin{equation}
\begin{multlined}
    \P\left( \sum_{v \in [n]} X_v(\infty) \geq l \right)
    \leq \P\left( Y(\infty) \geq l \right)
    = \sum_{i = l}^n \frac{\rho^i (1 - \rho)}{\rho^k - \rho^{n+1}}    = \frac{\rho^l - \rho^{n+1}}{\rho^k - \rho^{n+1}}
    \leq \rho^{l - k} \leq \left( 1 - \frac{\mu_+ \varepsilon}{\lambda + \mu_+ \varepsilon} \right)^{\varepsilon n}.
\end{multlined}
\end{equation}
\end{proof}


\begin{proof}[Proof of Lemma \ref{thm:subcrit_idle}]
We know that
\begin{equation}
\begin{split}
\label{eq:subcrit_idle_1}
    \E\Big[ &(1 - X_v(\infty)) I_v(\infty) \Big]
    = \E\left[ \frac{1}{n} \sum_{v' \in [n]} (1 - X_{v'}(\infty)) \frac{n (1 - X_v(\infty)) I_v(\infty)}{\sum_{v' \in [n]} (1 - X_{v'}(\infty))} \right] \\
   & = \E\left[ \frac{1 - c}{\lambda} \frac{\lambda n (1 - X_v(\infty)) I_v(\infty)}{\sum_{v' \in [n]} (1 - X_{v'}(\infty))} \right]    \pm \E\left[ \left\lvert \frac{1}{n} \sum_{v' \in [n]} (1 - X_{v'}(\infty)) - (1 - c) \right\rvert \frac{n (1 - X_v(\infty)) I_v(\infty)}{\sum_{v' \in [n]} (1 - X_{v'}(\infty))} \right].
\end{split}
\end{equation}
By Lemma \ref{lem:lyapunov_eq}, the first term on the right-hand side of Equation \eqref{eq:subcrit_idle_1} satisfies
\begin{equation}
\begin{multlined}
    \E\left[ \frac{1 - c}{\lambda} \frac{\lambda n (1 - X_v(\infty)) I_v(\infty)}{\sum_{v' \in [n]} (1 - X_{v'}(\infty))} \right]
    = \E\left[ \frac{(1 - c) (1 - X_v(t))}{\lambda} \right],
\end{multlined}
\end{equation}
Now let
\[ E_1 := \left\{ \sum_{v' \in [n]} \big(1 - X_{v'}(\infty)\big) \geq \frac{\delta n}{3 \mu_+} \right\} \qquad \text{and} \qquad  E_2 := \left\{ \left\lvert \frac{1}{n} \sum_{v' \in [n]} X_{v'} - c \right\rvert \leq \varepsilon \right\}. \]
The second term on the right-hand side of Equation~\eqref{eq:subcrit_idle_1} satisfies
\begin{equation*}
\begin{split}
\label{eq:subcrit_idle_3}
    \E\Big[ \Big\lvert \frac{1}{n} & \sum_{v' \in [n]} (1 - X_{v'}(\infty)) - (1 - c) \Big\rvert \frac{n (1 - X_v(\infty)) I_v(\infty)}{\sum_{v' \in [n]} (1 - X_{v'}(\infty))} \Big] \\
    &= \E\left[ \left\lvert \frac{1}{n} \sum_{v' \in [n]} X_{v'}(\infty)) - c \right\rvert \frac{n (1 - X_v(\infty)) I_v(\infty)}{\sum_{v' \in [n]} (1 - X_{v'}(\infty))} \left( \mathbbm{1}_{E_1} + \mathbbm{1}_{E_1^c} \right) \right] \\
    &\leq \E\left[ \left\lvert \frac{1}{n} \sum_{v' \in [n]} X_{v'}(\infty) - c \right\rvert \frac{n (1 - X_v(\infty)) I_v(\infty)}{\sum_{v' \in [n]} (1 - X_{v'}(\infty))} (\mathbbm{1}_{E_2} + \mathbbm{1}_{E_2^c}) \mathbbm{1}_{E_1} \right]   + \sqrt{\E\left[ n^2 I_v(\infty)^2 \right] \P(E_1^c)} \\
    &\leq \E\left[ \frac{\varepsilon}{\lambda} \frac{\lambda n (1 - X_v(\infty)) I_v(\infty)}{\sum_{v' \in [n]} (1 - X_{v'}(\infty))} \right]
    + \sqrt{ \E\left[ \frac{I_v(\infty)^2}{(\delta / (3 \mu_+))^2} \right] \P(E_2^c)}     + \frac{\sqrt{2} n}{\lambda} \left( 1 - \frac{\delta / 3}{\lambda + \delta / 3} \right)^{\delta n / (6 \mu_+)} \\
    &\leq \frac{\varepsilon}{\lambda}
    + \frac{3 \sqrt{2} \mu_+}{\delta \lambda} \sqrt{\frac{1 + \sqrt{\frac{8 (\mu_+ + \lambda) \log(n)}{\alpha}}}{n^{\frac{\mu_-}{\alpha}}}}
    + \frac{\sqrt{2} n}{\lambda} \left( 1 - \frac{\delta}{3 \lambda + \delta} \right)^{\delta n / (6 \mu_+)},
\end{split}
\end{equation*}
where the first inequality follows by Cauchy-Schwartz, the second inequality follows by Cauchy-Schwartz, the fact that $I_v(\infty)$ is stochastically dominated by an $\text{Exp}(\lambda)$, and Lemma~\ref{thm:subcrit_bound}, and the third inequality follows by Lemmas \ref{lem:lyapunov_eq} and \ref{thm:subcrit_concentrate}. Also,
\begin{equation}
\begin{multlined}
\label{eq:subcrit_idle_2}
    \E\left[ X_v(\infty) I_v(\infty) \right]
    = \E\left[ \frac{(1 - X_v(\infty))}{\mu_v} \right]     = \E\left[ \frac{1}{\mu_v n} \sum_{v' \in [n]} (1 - X_{v'}(\infty)) \frac{n (1 - X_v(\infty))}{\sum_{v' \in [n]} (1 - X_{v'}(\infty))} \right] \\
    = \E\left[ \frac{1 - c}{\lambda \mu_v} \frac{\lambda n (1 - X_v(\infty))}{\sum_{v' \in [n]} (1 - X_{v'}(\infty))} \right] 
    \pm \E\left[ \frac{1}{\mu_v} \left\lvert \frac{1}{n} \sum_{v' \in [n]} (1 - X_{v'}(\infty)) - (1 - c) \right\rvert \frac{n (1 - X_v(\infty))}{\sum_{v' \in [n]} (1 - X_{v'}(\infty))} \right],
\end{multlined}
\end{equation}
where the first equality follows by Lemma \ref{lem:lyapunov_eq}. The first term on the right-hand side of Equation \eqref{eq:subcrit_idle_2} satisfies
\begin{equation}
    \E\left[ \frac{1 - c}{\lambda \mu_v} \frac{\lambda n (1 - X_v(\infty))}{\sum_{v' \in [n]} (1 - X_{v'}(\infty))} \right]
    = \E\left[ \frac{(1 - c) X_v(\infty)}{\lambda} \right],
\end{equation}
by Lemma \ref{lem:lyapunov_eq}. The second term on the right-hand side of Equation \eqref{eq:subcrit_idle_2} satisfies
\begin{equation}
\begin{multlined}
    \E\left[ \frac{1}{\mu_v} \left\lvert \frac{1}{n} \sum_{v' \in [n]} (1 - X_{v'}(\infty)) - (1 - c) \right\rvert \frac{n (1 - X_v(\infty))}{\sum_{v' \in [n]} (1 - X_{v'}(\infty))} \right] \\
    \leq \frac{\varepsilon}{\lambda}
    + \frac{3 \mu_+}{\delta \mu_-} \sqrt{\frac{1 + \sqrt{\frac{8 (\mu_+ + \lambda) \log(n)}{\alpha}}}{n^{\frac{\mu_-}{\alpha}}}}
    + \frac{n}{\mu_-} \left( 1 - \frac{\delta}{3 \lambda + \delta} \right)^{\delta n / (6 \mu_+)},
\end{multlined}
\end{equation}
which follows along similar lines as Equation \eqref{eq:subcrit_idle_3}. Therefore, the proof follows by adding equations \eqref{eq:subcrit_idle_1} and \eqref{eq:subcrit_idle_2} and the fact that $\varepsilon \leq \delta / \mu_-$.
\end{proof}

\begin{proof}[Proof of Lemma \ref{lem:barphi_bounded}]
We define
\begin{equation} \label{eq:def_y}
\begin{aligned}
    y_1(t) &:= \int\limits_0^t \left( \lambda - \max_{v \in [n]} \mu_v y_1(s) \right) \diff s, \qquad \text{and} \qquad y_2(t) := \int\limits_0^t \left( \lambda - \min_{v \in [n]} \mu_v y_2(s) \right) \diff s,
\end{aligned}
\end{equation}
with $y_1(0) = y_2(0) = \int_0^\infty \bar{\phi}_0(\diff x)$. Now, we claim that
$y_1(t) \leq \int_0^\infty \bar{\phi}_t(\diff x) \leq y_2(t). $
To see why, note that $y_1(t)$, $\int_0^\infty \bar{\phi}_t(\diff x)$ and $y_2(t)$ are differentiable as a function of $t$. If $t$ is any time such that $\int_0^\infty \bar{\phi}_t(\diff x) = y_1(t)$, then
\begin{equation}
    \frac{\diff}{\diff t} \int\limits_0^\infty \bar{\phi}_t(\diff x)
    = \lambda - \int\limits_0^\infty x \bar{\phi}_t(\diff x)
    \geq \lambda - \max_{v \in [n]} \mu_v \int\limits_0^\infty \bar{\phi}_t(\diff x)
    = \lambda - \max_{v \in [n]} \mu_v y_1(t)
    = \frac{\diff}{\diff t} y_1(t),
\end{equation}
and, if $t$ is any time such that $\int_0^\infty \bar{\phi}_t(\diff x) = y_2(t)$,
\begin{equation}
    \frac{\diff}{\diff t} \int\limits_0^\infty \bar{\phi}_t(\diff x)
    = \lambda - \int\limits_0^\infty x \bar{\phi}_t(\diff x)
    \leq \lambda - \min_{v \in [n]} \mu_v \int\limits_0^\infty \bar{\phi}_t(\diff x)
    = \lambda - \max_{v \in [n]} \mu_v y_2(t)
    = \frac{\diff}{\diff t} y_2(t),
\end{equation}
from which the claim follows. Moreover, Equation \eqref{eq:def_y} implies that
\begin{equation}
\begin{aligned}
    \frac{\diff}{\diff t} \left\lvert y_1(t) - \frac{\lambda}{\max_{v \in [n]} \mu_v} \right\rvert
    &= - \left( \max_{v \in [n]} \mu_v \right) \left\lvert \frac{\lambda}{\max_{v \in [n]} \mu_v} - y_1(t) \right\rvert, \\
    \frac{\diff}{\diff t} \left\lvert y_2(t) - \frac{\lambda}{\min_{v \in [n]} \mu_v} \right\rvert
    &= - \left(\min_{v \in [n]} \mu_v \right) \left\lvert \frac{\lambda}{\min_{v \in [n]} \mu_v} - y_2(t) \right\rvert,
\end{aligned}
\end{equation}
and hence
\begin{equation}
\begin{aligned}
    \left\lvert y_1(t) - \frac{\lambda}{\max_{v \in [n]} \mu_v} \right\rvert
    = \left\lvert y_1(0) - \frac{\lambda}{\max_{v \in [n]} \mu_v} \right\rvert e^{ -\max\limits_{v \in [n]} \mu_v t}, \\
    \left\lvert y_2(t) - \frac{\lambda}{\min_{v \in [n]} \mu_v} \right\rvert
    = \left\lvert y_2(0) - \frac{\lambda}{\min_{v \in [n]} \mu_v} \right\rvert e^{-\min\limits_{v \in [n]} \mu_v t},
\end{aligned}
\end{equation}
which completes the proof.
\end{proof}




\end{document}